\documentclass[10pt]{article}

\usepackage{amsmath}
\usepackage{amsthm}
\usepackage{amssymb}
\usepackage{amsfonts}
\usepackage{epsf}
\usepackage{graphicx}
\usepackage{epstopdf}
\usepackage{hyperref}

\textwidth 6.25in \textheight 8.5in \oddsidemargin 0.00in
\evensidemargin 0.00in \topmargin 0.00in \marginparwidth 0.00in
\marginparsep 0.00in \linespread{1.1}

\newtheorem{lem}{Lemma}[section]
\newtheorem{thm}{Theorem}[section]
\newtheorem{cor}{Corollary}[section]
\newtheorem{conj}{Conjecture}[section]
\newtheorem{prop}{Proposition}[section]
\theoremstyle{remark}
\newtheorem{rmk}{Remark}[section]
\theoremstyle{definition}

\numberwithin{equation}{section}

\DeclareMathOperator\Ai{{Ai}}
\DeclareMathOperator\Bi{{Bi}}

\DeclareMathOperator\im{{Im}}

\def\cut{\setminus}
\def\ep{\varepsilon}
\def\a{\alpha}
\def\b{\beta}
\def\la{\lambda}
\def\s{\sigma}
\def\x{\xi}
\def\G{\Gamma}
\def\P{\mathcal{P}}
\def\Q{\mathcal{Q}}

\begin{document}

\title{Plancherel-Rotach asymptotic expansion for some polynomials from indeterminate moment problems}
\date{\today}
\author{Dan Dai$^\ast $, Mourad E.H. Ismail$^\dag$ and Xiang-Sheng Wang$^\ddag$}

\maketitle

\begin{abstract}
  We study the Plancherel--Rotach asymptotics of four families of orthogonal polynomials, the Chen--Ismail polynomials, the Berg-Letessier-Valent polynomials, the Conrad--Flajolet polynomials I and II. All these polynomials arise in indeterminate moment problems and three of them are birth and death process polynomials with cubic or quartic rates. We employ a difference equation asymptotic technique due to Z. Wang and R. Wong. Our analysis leads to a conjecture about large degree behavior of polynomials orthogonal
  with respect to solutions of indeterminate moment problems.

\end{abstract}

\noindent 2010 \textit{Mathematics Subject Classification}. Primary
41A60, 33C47, Secondary 30E05.

\noindent \textit{Key words and phrases}: Asymptotics, the Chen--Ismail polynomials, the Berg-Letessier-Valent polynomials, the Conrad--Flajolet polynomials, turning points, difference equation technique,  indeterminate moment problems,  Nevanlinna  functions, asymptotics of zeros,  Plancherel--Rotach asymptotics.

\vspace{3mm}


\vspace{5mm}

\hrule width 65mm

\vspace{2mm}

\begin{description}
\item  \hspace*{3.8mm}$\ast $ Department of Mathematics, City University of
Hong Kong, Hong Kong SAR.
Phone:	+852 3442-5995.
Fax:	+852 3442-0250.\\
Email: \texttt{dandai@cityu.edu.hk.}

\item \hspace*{3.8mm}$\dag $  Department of Mathematics, University of Central Florida, Orlando, Florida, USA
32816 and King Saud University, Riyadh, Saudi Arabia. \\
Phone: +1 407-808-7466.
Fax: +1 407-823-6253.\\
Email: \texttt{mourad.eh.ismail@gmail.com}

\item \hspace*{3.8mm}$\ddag $   Department of Mathematics and Statistics, Memorial University of Newfoundland, St. Johns, NL A1C 5S7,
Canada.
Phone:	+1 709-864-4321.
Fax:	+1 709-864-3010. \\
Email: \texttt{xswang@mun.ca} (Corresponding author)

\end{description}

\newpage

\section{Introduction}
The Plancherel-Rotach asymptotics for orthogonal polynomials refer to asymptotics of the orthogonal polynomials scaled by the largest zero. There are three types of Plancherel--Rotach asymptotics depending on whether we are in the oscillatory region between the largest and smallest zeros, in the exponential region beyond the largest (or smallest) zero, or in the transition region near the largest (or smallest) zero. If the extreme zero moves as the polynomial degree increases, the last type of asymptotics is related to the soft edge asymptotics in the random matrix theory and describes the eigenvalues of large random Hermitian matrices near the tail end, see
\cite{And:Gui:Zei}. The Plancherel-Rotach  asymptotics for Hermite and Laguerre polynomials are given in
\cite{Sze} and \cite{And:Gui:Zei}.  The corresponding asymptotics for the exponential weights were
developed by using the Deift-Zhou steepest descent analysis for corresponding Riemann-Hilbert problems in \cite{Dei:Kri:McL:Ven:Zho} and \cite{Kri:McL}.

Every birth and death process leads to a family of orthogonal polynomials. The birth rates
$\{\la_n \}$
and the death rates $\{\mu_n\}$ are assumed to satisfy the conditions that $\la_n$ and $\mu_{n+1}$ are positive and
$\mu_0  \ge 0$. The recurrence relation of the birth and death process polynomials $\{\mathcal{Q}_n(x)\}$ is
\begin{equation} \label{bir&dea}
  -x\mathcal{Q}_n(x) = \la_n \mathcal{Q}_{n+1}(x) + \mu_n\mathcal{Q}_{n-1}(x) - (\la_n+\mu_n)\mathcal{Q}_n(x), \quad n >0
\end{equation}
with the initial conditions
 \begin{eqnarray*}
 \mathcal{Q}_0(x) =1, \mathcal{Q}_1(x) = (\la_0 +\mu_0 -x)/\la_0.
 \end{eqnarray*}
It is known in the literature that some classical continuous and discrete orthogonal polynomials can be written as birth and death process polynomials with certain special rates. For example, one can find the birth and death rates for Laguerre polynomials, Jacobi polynomials and Charlier polynomials in \cite[Sec. 5.2]{Ismbook}. Of course, for these classical polynomials, their weight functions are  well-known and unique. However, for quite a few choices of $\la_n$ and $\mu_n$, the corresponding moment problem is indeterminate. This means that the orthogonality measures of such polynomials are not unique. For example,
Letessier and Valent \cite{Let:Val} introduced polynomials associated with quartic rates
\begin{equation} \label{Berg&Val}
\lambda_n=(4n+1)(4n+2)^2(4n+3),\quad
\mu_n=(4n-1)(4n)^2(4n+1).
\end{equation}
These polynomials were also studied by Berg and Valent \cite{Ber:Val1} and we refer to them as Berg-Letessier-Valent polynomials.
Van Fossen Conrad and Flajolet \cite{Con,Con:Fla} considered the following two families with cubic rates
\begin{equation}
\label{masterI}
\lambda_n=(3n+c+1)(3n+c+2)^2, \quad \mu_n=(3n+c)^2 (3n+c+1), \quad c>0,
\end{equation}
\begin{equation}
\label{masterII}
\lambda_n=(3n+c+1)^2 (3n+c+2), \quad \mu_n=(3n+c-1) (3n+c)^2, \quad c>0;
\end{equation}
see also \cite{Gil:Leo:Val}. The moment problems associated with the above rates are all indeterminate.
For the Conrad-Flajolet polynomials this will be proved in Section 6.
In fact, the orthogonality measures of above polynomials are still unknown in the literature.

Chen and Ismail \cite{Che:Ism} considered the orthogonal polynomials arising from the following recurrence relation
\begin{equation} \label{fn3term}
  \mathcal{F}_{n+1}(x) = x \mathcal{F}_{n}(x) - 4n^2 (4n^2-1) \mathcal{F}_{n-1}(x)
\end{equation}
with $\mathcal{F}_{0}(x) = 1$ and $\mathcal{F}_{1}(x) = x$. They indicated that these polynomials are also related to the indeterminate moment problems. Moreover, they found the following family of weight functions for $\mathcal{F}_{n}(x)$
\begin{equation} \label{fn-weights}
  w_\a(x) = \frac{\sqrt{1-\a^2} [\cos(\rho \sqrt{x/2}) + \cosh(\rho \sqrt{x/2})]}{2[ \{\cos(\rho \sqrt{x/2}) + \cosh(\rho \sqrt{x/2}) \}^2 - \a^2 \sin^2(\rho \sqrt{x/2}) \sinh^2(\rho \sqrt{x/2})]}, \quad x\in \mathbb{R},
\end{equation}
where $\a \in (-1,1)$, $\rho$ is a constant defined in \eqref{rho-fn} below and $\sqrt{-1}$ is defined as $i$; cf. \cite[eq. (5.11)]{Che:Ism}.

Note that the weight function \eqref{fn-weights} has a singularity at $0$, which is similar to the Freud weight $e^{-|x|^\beta}$. One may want to use Riemann-Hilbert method to obtain asymptotic formulas for the Chen--Ismail polynomials, as has been done by Kriecherbauer and McLaughlin \cite{Kri:McL} for Freud polynomials. However, since the orthogonality measures are usually unknown or too complicated (cf. Chen--Ismail polynomials) for orthogonal polynomials related to the indeterminate moment problems, it is difficult to apply the powerful Deift-Zhou steepest descent analysis for Riemann-Hilbert problems.
In this paper, we intend to use the difference equation method developed by Wong and his colleagues in \cite{xswang2012,wang-wong2003,wang-wong2005} to get the asymptotic expansions. This will be the first time to apply this technique to derive Plancherel-Rotach asymptotics for orthogonal polynomials corresponding to indeterminate moment problems. It must be emphasized that we only use the recurrence relation (or difference equations) to achieve our asymptotic expansion in this paper.  This is a significant improvement comparing with the previous works, where some extra asymptotic information is needed to determine the asymptotic expansion; for example, see \cite{Lee:Wong,wang-wong2003,wang-wong2005}.  Because we are dealing
with polynomials orthogonal with respect to infinitely many measures it is important to use
a technique which only depends on the moments but not on the specific orthogonality measure used.

Before we proceed to  the next section, we would like to mention that one useful tool in estimating the largest and smallest zeros of orthogonal polynomials is the following theorem of Ismail and Li \cite{Ism:Li}. We did not consider any $q$-orthogonal polynomials in this work. Ismail
and Li \cite{Ism:Li2} considered the Placherel-Rotach asymptotics for symmetric
$q$-polynomials when the recursion coefficients grow exponentially. We hope
to treat these types of polynomials in a future work using difference equation
techniques.
\begin{thm}\label{thmIsmLi}
Let $\left\{\mathcal{P}_n(x)\right\}$ be a sequence of monic polynomials satisfying
\begin{equation} \label{monic-recur}
  x\mathcal{P}_n(x) = \mathcal{P}_{n+1}(x) + \alpha_n \mathcal{P}_n(x) + \beta_n \mathcal{P}_{n-1}(x),
\end{equation}
with $\beta_n>0$, for $1\le n<N$ and let $\left\{c_n\right\}$ be a chain sequence. Set
\begin{equation}
\label{eq7.2.5}
B:=\max\{x_n:0<n<N\},\quad\text{and}\quad A:=\min\{y_n:0<n<N\},
\end{equation}
where $x_n$ and $y_n$, $x_n\ge y_n$, are the roots of the equation
\begin{equation}
\label{eq7.2.6}
(x-\alpha_n) (x-\alpha_{n-1}) c_n=\beta_n,
\end{equation}
that is
\begin{equation}
\label{chain-sol}
x_n,\; y_n=\frac{1}{2}(\alpha_n+\alpha_{n-1})\pm
\frac{1}{2}\sqrt{(\alpha_n-\alpha_{n-1})^2+4\beta_n/c_n}.
\end{equation}
Then the zeros of $\mathcal{P}_N(x)$ lie in $(A, B)$.
\end{thm}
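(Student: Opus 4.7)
The strategy is to translate the question of zero-localization for $\mathcal{P}_N(x)$ into a positive-definiteness statement for a finite symmetric tridiagonal matrix, and then to verify that positive-definiteness using the comparison property of chain sequences.

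First, I would symmetrize \eqref{monic-recur}. The substitution $p_n(x) := \mathcal{P}_n(x)/\sqrt{\beta_1\beta_2\cdots\beta_n}$ (with $p_0 := 1$) turns the monic recurrence into a symmetric three-term recurrence, and the zeros of $\mathcal{P}_N$ are exactly the eigenvalues of the $N\times N$ symmetric Jacobi matrix $J_N$ with diagonal $(\alpha_0,\ldots,\alpha_{N-1})$ and off-diagonals $(\sqrt{\beta_1},\ldots,\sqrt{\beta_{N-1}})$. Thus it is enough to show that both $BI - J_N$ and $J_N - AI$ are positive definite.

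Next, I would convert positivity into a chain-sequence condition. For any $x$ exceeding every $\alpha_k$, conjugation of $xI - J_N$ by $D = \operatorname{diag}((x-\alpha_k)^{-1/2})$ produces
\[
D(xI - J_N)D \;=\; I - K(x),
\]
where $K(x)$ is symmetric tridiagonal with zero diagonal and off-diagonal entries $\sqrt{\beta_n/[(x-\alpha_n)(x-\alpha_{n-1})]}$, $1\le n<N$. A classical criterion due to H.\,S.~Wall (see, e.g., Chihara's monograph on orthogonal polynomials, Ch.~III) states that $I - K(x)$ is positive semi-definite if and only if the sequence
\[
d_n(x) := \frac{\beta_n}{(x-\alpha_n)(x-\alpha_{n-1})}, \qquad 1\le n<N,
\]
is a chain sequence; strict termwise domination of $\{d_n(x)\}$ by a chain sequence upgrades semi-definiteness to definiteness.

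For the comparison step, observe from \eqref{chain-sol} that $x_n \ge (\alpha_n+\alpha_{n-1})/2$, so any $x > B$ satisfies $x > \alpha_n$ for every $n$ and $(x-\alpha_n)(x-\alpha_{n-1})c_n > \beta_n$, i.e., $0 < d_n(x) < c_n$. By the comparison theorem for chain sequences (any nonnegative sequence dominated termwise by a chain sequence is itself a chain sequence), $\{d_n(x)\}$ is a chain sequence, and the strictness of the domination yields positive definiteness of $I - K(x)$. Consequently every eigenvalue of $J_N$ is strictly less than $x$ for every $x>B$, so every zero of $\mathcal{P}_N$ is at most $B$. The same argument applied to $J_N - xI$ for $x<A$ provides the lower bound.

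The key non-routine ingredient is the chain-sequence/positivity dictionary used in Step~2; this is the real content of the theorem and is proved by a Schur-complement induction on the leading principal minors of $I - K(x)$ which reconstructs the parameter sequence $\{g_n\}$ with $d_n = (1-g_{n-1})g_n$. A smaller loose end is the passage from ``zeros $\le B$'' to ``zeros in the open interval $(A,B)$'': this follows either from the strict-definiteness refinement of the comparison step (strict $d_n(x) < c_n$ at $x = B$) or from a continuity argument observing that $\mathcal{P}_N$ cannot vanish at an endpoint near which $xI - J_N$ is uniformly positive definite.
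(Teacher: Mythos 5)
The paper does not prove this theorem; it is quoted verbatim from Ismail and Li \cite{Ism:Li} as a known tool, so there is no in-paper argument to compare against. Your proposal reconstructs what is essentially the original Ismail--Li proof: identify the zeros of $\mathcal{P}_N$ with the eigenvalues of the $N\times N$ symmetric Jacobi matrix, conjugate $xI-J_N$ to $I-K(x)$, translate positive definiteness into the Wall--Wetzel chain-sequence criterion for $d_n(x)=\beta_n/[(x-\alpha_n)(x-\alpha_{n-1})]$, and finish with the comparison theorem for chain sequences. That is the right route and the main body of your argument is sound.

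The one point that needs repair is the strictness at the endpoints. If $n^{*}$ realizes the maximum in \eqref{eq7.2.5}, then $d_{n^{*}}(B)=c_{n^{*}}$ exactly, so your first suggested fix (strict domination at $x=B$) is not available; and your second fix fails too, since a limit of positive definite matrices as $x\downarrow B$ is only positive semi-definite. The correct repair is that neither is needed: Wall's comparison theorem requires only the non-strict inequality $0<d_n\le c_n$, so $\{d_n(B)\}_{n=1}^{N-1}$ is still a chain sequence, say $d_n(B)=(1-g_{n-1})g_n$ with $g_0\in[0,1)$ and $g_n\in(0,1)$ for $n\ge1$; and for a \emph{finite} index range the chain-sequence property already forces strict positive definiteness of $I-K(B)$, because the quadratic form decomposes as
\[
v^{T}\bigl(I-K(B)\bigr)v \;=\; g_0v_0^2+\sum_{n=1}^{N-1}\Bigl(\sqrt{1-g_{n-1}}\,v_{n-1}-\sqrt{g_n}\,v_n\Bigr)^{2}+(1-g_{N-1})v_{N-1}^{2},
\]
which vanishes only when $v_{N-1}=0$, whence successively $v_{N-2}=\dots=v_0=0$. (The semi-definite boundary case occurs only for infinite Jacobi matrices.) Hence $B$ is not an eigenvalue and all zeros are strictly below $B$; the symmetric argument at $A$ gives the open interval $(A,B)$. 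A last cosmetic remark: to justify $x_n>\max(\alpha_n,\alpha_{n-1})$ you should note that the square root in \eqref{chain-sol} strictly exceeds $|\alpha_n-\alpha_{n-1}|$ because $\beta_n/c_n>0$, not merely that $x_n\ge\tfrac12(\alpha_n+\alpha_{n-1})$.
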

In practice, one usually choose the simple chain sequence $c_n \equiv 1/4$ for all $n$ in the above theorem.

The present paper is organized as follows. In Section 2, we study the Chen--Ismail polynomials. After applying results in Wang and Wong \cite{wang-wong2005} to obtain two linearly independent solutions for the difference equation \eqref{fn3term}, we develop ideas used in Wang and Wong \cite{xswang2012} to determine the coefficients of these two solutions. Then the Plancherel-Rotach asymptotic expansion is obtained, which is given in terms of Airy functions. Moreover, we study the limiting zero distribution as well as the behavior of the largest zero. In the next three sections, we follow the similar procedures as in Section 2 and derive asymptotic expansions for Berg-Letessier-Valent polynomials and Conrad--Flajolet polynomials, respectively. Since Bessel-type asymptotic expansions appear in these cases, we need to apply the recent results by Cao and Li \cite{Cao:Li} to get the two linearly independent solutions of our difference equations. As Sections 2-5 are independent of each other,
we shall use the same notation for the functions and variables but it will mean different things in different sections. We hope
that this will not cause any confusion. In the last section of this paper, we list some remarks about the moment problem and formulate a conjecture about large degree (Plancherel--Rotach) behavior of orthogonal polynomials associated with indeterminate moment problems. In this section, we also apply asymptotic results to show that the moment problem associated with the Conrad-Flajolet polynomials is indeterminate.

We used the term Conrad-Flajolet polynomials because the proper name ``Van Fossen Conrad--Flajolet polynomials" is just too long and we are sure that Eric Van Fossen Conrad will not mind.


\section{Chen--Ismail polynomials}

Before we derive the Plancherel-Rotach asymptotics for $\mathcal{F}_{n}(x)$, one can easily get a bound for the largest and smallest zeros from Theorem \ref{thmIsmLi}.

\begin{prop} \label{fn-chain}
  Let $x_{n,k}$ be zeros of $\mathcal{F}_{n}(x)$ such that $x_{n,1}>x_{n,2}> \cdots>x_{n,n}$, then we have the following bounds for all $n\geq 1$
  \begin{equation*}
    x_{n,1} < 8n^2 \sqrt{1-\frac{1}{4n^2}} \qquad \textrm{and} \qquad x_{n,n} > -8n^2 \sqrt{1-\frac{1}{4n^2}}.
  \end{equation*}
\end{prop}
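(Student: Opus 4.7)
The plan is a direct application of Theorem \ref{thmIsmLi}. First I would observe that the recurrence \eqref{fn3term} is already in the monic form \eqref{monic-recur} with $\alpha_n \equiv 0$ and $\beta_n = 4n^2(4n^2-1) > 0$ for $n \ge 1$, so the positivity hypothesis of the theorem is satisfied without any preliminary normalisation.

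Following the suggestion immediately after the theorem, I would then take the simple chain sequence $c_n \equiv 1/4$. With $\alpha_n = \alpha_{n-1} = 0$ and $c_n = 1/4$, formula \eqref{chain-sol} collapses to
$$ x_k = -y_k = \frac{1}{2}\sqrt{64\,k^2(4k^2-1)} = 4k\sqrt{4k^2-1} = 8k^2\sqrt{1-\frac{1}{4k^2}}. $$
Since the product $4k\sqrt{4k^2-1}$ is strictly increasing on the positive integers, the extrema required in \eqref{eq7.2.5} are attained at $k = n-1$, so that
$$ B = 8(n-1)^2\sqrt{1-\frac{1}{4(n-1)^2}} < 8n^2\sqrt{1-\frac{1}{4n^2}}, \qquad A = -B > -8n^2\sqrt{1-\frac{1}{4n^2}}. $$
Theorem \ref{thmIsmLi} then confines all zeros of $\mathcal{F}_n(x)$ to $(A,B)$, which yields the stated bounds for $n \ge 2$. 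The case $n = 1$ is handled separately and immediately: $\mathcal{F}_1(x) = x$ vanishes only at the origin, which lies trivially inside $(-4\sqrt{3},4\sqrt{3})$.

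There is really no serious obstacle in this argument: once the chain sequence $c_n \equiv 1/4$ is chosen, the problem reduces to an elementary monotonicity check for $k \mapsto 4k\sqrt{4k^2-1}$, followed by the strict inequality $4(n-1)\sqrt{4(n-1)^2-1} < 4n\sqrt{4n^2-1}$. The only genuine design choice is the chain sequence itself, and the standard choice $c_n \equiv 1/4$ is already strong enough, so none of the asymptotic machinery developed later in the paper is needed at this preparatory stage.
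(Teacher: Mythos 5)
Your proposal is correct and follows exactly the paper's (very terse) proof: apply Theorem \ref{thmIsmLi} with the constant chain sequence $c_n\equiv 1/4$, for which $\alpha_n=0$ and $\beta_n=4n^2(4n^2-1)$ give $x_k=-y_k=4k\sqrt{4k^2-1}$. Your added observations — that the monotonicity in $k$ actually yields the slightly sharper bound $4(n-1)\sqrt{4(n-1)^2-1}$, and that $n=1$ needs a separate (trivial) check since the index set in \eqref{eq7.2.5} is then empty — are correct refinements of details the paper leaves implicit.
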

\begin{proof}
  Recall the recurrence relation of $\mathcal{F}_{n}(x)$ in \eqref{fn3term} and choose $c_n =1/4$ in Theorem \ref{thmIsmLi}. Then the result follows.
\end{proof}

\subsection{Difference equation method}

To derive the Plancherel-Rotach asymptotics of $\mathcal{F}_{n}(x)$, we start from the recurrence relation in \eqref{fn3term} and apply Wang and Wong's difference equation method developed in \cite{wang-wong2003, wang-wong2005}. To use their results, we transform the recurrence relation \eqref{fn3term} into the following standard form they need
\begin{equation} \label{fn3term-new}
  p_{n+1}(x) - (A_n x + B_n) \, p_n(x) + p_{n-1}(x) = 0.
\end{equation}
Let
\begin{equation} \label{fnkn-def}
  K_n := 2^{4n} \; \Gamma^2\left( \frac{n+1}{2} \right)\Gamma\left( \frac{n+3/2}{2} \right) \Gamma\left( \frac{n+1/2}{2} \right)
\end{equation}
and $\mathcal{F}_{n}(x) = K_n \, p_n(x)$, then \eqref{fn3term} becomes
\begin{equation*}
  K_{n+1} \, p_{n+1}(x) = x K_n \, p_n(x) - 4n^2 (4n^2-1) K_{n-1} \, p_{n-1}(x).
\end{equation*}
Since $K_{n+1} = 4n^2 (4n^2-1) K_{n-1} $, the above recurrence equation reduces to the standard form \eqref{fn3term-new} with $A_n= \frac{K_n}{K_{n+1}}$ and $B_n=0$.
As $n \to \infty$, the recurrence coefficient $A_n$ satisfies the following expansion
\begin{equation} \label{FnAn-asy}
  A_n  \sim n^{-\theta} \sum_{s=0}^\infty \frac{\alpha_s}{n^s}
\end{equation}
with $\theta=2$ and
\begin{eqnarray}
  \alpha_0 =  \frac{1}{4}, \qquad \quad \alpha_1 =  -\frac{1}{4}. \label{fnalpha-s}
\end{eqnarray}
Since $A_n$ is of order $O(n^{-2})$ when $n$ is large, to balance the term $A_n x $ in \eqref{fn3term-new}, we introduce $x = \nu^2 t$ and $\nu = n + \tau_0$, where $\tau_0$ is a constant to be determined. Then the characteristic equation for \eqref{fn3term-new} is
\begin{equation*}
  \lambda^2- \alpha_0 t  \lambda + 1= 0
\end{equation*}
with $\alpha_0$ given in \eqref{fnalpha-s}. The roots of this equation are
\begin{equation*}
  \lambda = \frac{1}{2} \left[ \frac{t}{4} \pm \sqrt{(\frac{t}{4})^2 - 4} \right]
\end{equation*}
and they coincide when the quantity inside the above square root vanishes, that is, $t = t_\pm = \pm 8$. These points $t_\pm$ are called transition points for difference equations by Wang and Wong in \cite{wang-wong2003, wang-wong2005} because in their neighbourhood the behaviors of solutions to \eqref{fn3term-new} change dramatically.

Since the polynomials $p_{2n}(x)$ and $p_{2n+1}(x)$ are even and odd functions, respectively, let us consider the asymptotics near the large transition point $t_+=8$ only. According to the main theorem in \cite[p. 189]{wang-wong2003}, we have the following proposition.
\begin{prop}\label{prop-uniform-Chen-Ismail}
  When $n$ is large, $p_n(x)$ in \eqref{fn3term-new} can be expressed as
  \begin{equation*}
   p_n(x) = C_1(x) P_n(x) + C_2(x) Q_n(x),
  \end{equation*}
  where $C_1(x)$ and $C_2(x)$ are two $n$-independent functions. In the above formula, $P_n(x)$ and $Q_n(x)$ are two linearly independent solutions of \eqref{fn3term-new} satisfying the following Airy-type asymptotic expansions in the neighbourhood of $t_+=8$
  \begin{equation*}
  P_n(\nu^2 t) \sim  \left(\frac{64 \, U(t)}{t^2 - 64}\right)^{\frac{1}{4}} \left[ \Ai(\nu^{\frac{2}{3}} U(t)) \sum_{s=0}^\infty \frac{\tilde{A}_s(U)}{\nu^{s-\frac{1}{6}}} + \Ai'(\nu^{\frac{2}{3}} U(t)) \sum_{s=0}^\infty \frac{\tilde{B}_s(U)}{\nu^{s+\frac{1}{6}}} \right]
\end{equation*}
and
\begin{equation*}
  Q_n(\nu^2 t) \sim \left(\frac{64 \, U(t)}{t^2 - 64}\right)^{\frac{1}{4}} \left[ \Bi(\nu^{\frac{2}{3}} U(t)) \sum_{s=0}^\infty \frac{\tilde{A}_s(U)}{\nu^{s-\frac{1}{6}}} + \Bi'(\nu^{\frac{2}{3}} U(t)) \sum_{s=0}^\infty \frac{\tilde{B}_s(U)}{\nu^{s+\frac{1}{6}}} \right],
\end{equation*}
where $\nu = n+\frac{1}{2}$ and $U(t)$ is defined as
\begin{eqnarray}
  &&\hspace{-25pt}\frac{2}{3} [U(t)]^{\frac{3}{2}} = \sqrt{\frac{t}{2}} \left[ F(\frac{\pi}{2}, -1) - F(\arcsin\sqrt{\frac{8}{t}}, -1) \right] - \log \frac{t + \sqrt{t^2 - 64}}{8 }, \quad t \geq 8, \label{fnzeta-def1} \\
  &&\hspace{-25pt}\frac{2}{3} [-U(t)]^{\frac{3}{2}} = \cos^{-1} \frac{t}{8} - \frac{\sqrt{2t}}{8} \mathcal{B}_{1-\frac{t^2}{64}} (\frac{1}{2}, \frac{1}{4}) \qquad \qquad -8<t < 8. \label{fnzeta-def2}
\end{eqnarray}
Here $F(\varphi,k^2)$ is the elliptic integral of the first kind
  \begin{equation} \label{ellipticF}
    F(\varphi,k^2) = \int_0^\varphi \frac{d \theta}{\sqrt{1-k^2 \sin^2 \theta}},
  \end{equation}
 $\mathcal{B}_x(a,b)$ is the incomplete Beta function
  \begin{equation} \label{icbeta}
    \mathcal{B}_x(a,b) = \int_0^x y^{a-1} (1-y)^{b-1} dy
  \end{equation}
and the leading coefficients are given by
\begin{equation*}
  \tilde{A}_0(U) = 1, \qquad \tilde{B}_0(U) = 0.
\end{equation*}
\end{prop}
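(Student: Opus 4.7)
The proposition is a specialization of the general Airy-type uniform asymptotic theorem of Wang and Wong \cite{wang-wong2003} for second-order difference equations at a simple turning point, so the proof plan is to verify the hypotheses of that theorem, invoke it, and then identify the Liouville-type variable $U(t)$ in closed form.

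\emph{Step 1: setup and hypothesis checking.} The recurrence \eqref{fn3term-new} is already in the standard Wang--Wong form with $B_n = 0$ and $A_n$ admitting the full asymptotic series \eqref{FnAn-asy} ($\theta = 2$, $\alpha_0 = 1/4$, $\alpha_1 = -1/4$). Under $x = \nu^2 t$, the product $A_n x$ tends to $\alpha_0 t = t/4$, so the limiting characteristic equation $\lambda^2 - (t/4)\lambda + 1 = 0$ has simple turning points at $t_\pm = \pm 8$, where its discriminant $t^2/16 - 4$ vanishes simply. The shift $\tau_0$ is fixed by requiring the $1/n$ correction in $A_n \nu^2$ to vanish, which gives $\tau_0 = -\alpha_1/(2\alpha_0) = 1/2$; this is exactly the normalization $\nu = n + 1/2$ appearing in the proposition, and is the standard Wang--Wong choice that cancels the subleading shift in the Airy argument.

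\emph{Step 2: applying the theorem.} The main theorem of \cite{wang-wong2003} then provides two linearly independent solutions $P_n(\nu^2 t)$ and $Q_n(\nu^2 t)$ of \eqref{fn3term-new} with the stated Airy-type uniform expansions in a full complex neighbourhood of $t_+ = 8$. The Jacobian prefactor $(64 U(t)/(t^2 - 64))^{1/4}$ and the starting values $\tilde{A}_0 \equiv 1$, $\tilde{B}_0 \equiv 0$ are the standard outputs of the Liouville--Green--Airy ansatz used in that construction. Because \eqref{fn3term-new} is second-order linear, the specific solution $p_n(x)$ lies in the two-dimensional span of $\{P_n, Q_n\}$, so we can write $p_n = C_1(x) P_n + C_2(x) Q_n$ with coefficients independent of $n$.

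\emph{Step 3: explicit form of $U(t)$.} The variable $U(t)$ is defined by the Liouville-type integral
$$\tfrac{2}{3}\,U(t)^{3/2} = \int_8^t \log \lambda_+(s)\, d\sigma(s), \qquad \lambda_+(s) = \frac{s + \sqrt{s^2 - 64}}{8},$$
arising in the Wang--Wong construction, where $d\sigma$ is the appropriate scaling measure. For $t \geq 8$ an integration by parts isolates the logarithmic boundary term $-\log((t+\sqrt{t^2-64})/8)$, and the remaining integral reduces under the substitution $s = 8/\sin^2\theta$ to $\sqrt{t/2}\,[F(\pi/2,-1) - F(\arcsin\sqrt{8/t},-1)]$, yielding \eqref{fnzeta-def1}. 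For $-8 < t < 8$ the characteristic roots lie on the unit circle, $\log \lambda_+(s) = i \cos^{-1}(s/8)$, and $U$ becomes negative; the substitution $y = 1 - s^2/64$ then reduces the residual integral to the incomplete Beta function $\mathcal{B}_{1-t^2/64}(1/2, 1/4)$, yielding \eqref{fnzeta-def2}. The main obstacle is this last explicit reduction of the Liouville integral to closed form in terms of elliptic and incomplete Beta functions; the existence and asymptotic form of the two solutions, and the linear-combination representation of $p_n$, follow directly from the general theorem.
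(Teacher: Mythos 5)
Your proposal is correct and follows essentially the same route as the paper: verify that \eqref{fn3term-new} satisfies the hypotheses of the main theorem of Wang and Wong \cite{wang-wong2003} (the case $\theta\neq 0$, $t_+\neq 0$), fix $\tau_0=1/2$ so that $\nu=n+\tfrac12$, and invoke that theorem to obtain the two Airy-type solutions and the representation $p_n=C_1P_n+C_2Q_n$. Your Step 3, making explicit the reduction of the Liouville integral for $U(t)$ to the elliptic integral and incomplete Beta function, supplies detail that the paper relegates to the remark following the proposition, but it does not change the argument.
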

\begin{proof}
   Recall the asymptotic expansion for $A_n$ in \eqref{FnAn-asy}, the equation \eqref{fn3term-new} falls into the case $\theta \neq 0$ and $t_+ \neq 0 $ considered in \cite{wang-wong2003}. Following their approach, we choose
\begin{equation*}
  \tau_0 = - \frac{\alpha_1 t_+ }{2 \theta} = \frac{1}{2} \qquad \textrm{and} \qquad \nu = n + \tau_0.
\end{equation*}
Then our proposition follows from the main Theorem in \cite{wang-wong2003}.
\end{proof}

\begin{rmk}
  Note that the terms involving the elliptic integral $F(\varphi,k^2)$ and the incomplete Beta function $\mathcal{B}_x(a,b)$ in \eqref{fnzeta-def1} and \eqref{fnzeta-def2} can be written as
  \begin{eqnarray*}
    \sqrt{\frac{t}{2}} \left[ F(\frac{\pi}{2}, -1) - F(\arcsin\sqrt{\frac{8}{t}}, -1) \right] & = &  \int_8^t \frac{(t/s)^{\frac{1}{2}}}{\sqrt{s^2-64}} ds,  \\
    \frac{\sqrt{2t}}{8} \mathcal{B}_{1-\frac{t^2}{64}} (\frac{1}{2}, \frac{1}{4}) &=&  \int_t^8 \frac{(t/s)^{\frac{1}{2}}}{\sqrt{64-s^2}} ds.
  \end{eqnarray*}
   Moreover, one can verify that $U(t)$ defined in \eqref{fnzeta-def1} and \eqref{fnzeta-def2} is a monotonically increasing function in the neighbourhood of 8. Also, we have the following asymptotic formula
   \begin{equation} \label{FnUt-asy}
     U(t) = \frac{t-8}{8 \sqrt[3]{2}} + O(t-8)^2, \qquad \textrm{as } t \to 8.
   \end{equation}
\end{rmk}


\subsection{Determination of $C_1(x)$ and $C_2(x)$}

In this subsection, we will determine the coefficients $C_1(x)$ and $C_2(x)$ in Proposition \ref{prop-uniform-Chen-Ismail} via a matching method.
To this end, we shall derive the asymptotic formulas of $\mathcal{F}_n(x)$ (or, $p_n(x)$) in the exponential region and oscillatory region, respectively.
In the following lemma, we provide the asymptotic formula in the exponential region of the solution to a general class of difference equations.
A similar result was obtained by Van Assche and Geronimo in \cite{VanAssche:Geronimo1989}.
Here, we adopt the approach developed by Wang and Wong in \cite{xswang2012}.

\begin{lem}\label{lem-exponential-general}
  Let $\pi_n(x)$ be monic polynomials defined from the following recurrence relation
  $$\pi_{n+1}(x)=(x-a_n)\pi_n(x)-b_n\pi_{n-1}(x)$$
  with initial conditions $\pi_0(x)=1$ and $\pi_1(x)=x-a_0$. Here the constants $a_n$ and $b_n$ are assumed to be polynomials in $n$ and have the following asymptotic behaviors as $n\to\infty$:
  \begin{align*}
    a_n&= an^p+\a n^{p-1}+O(n^{p-2})\\
    b_n&= b^2n^{2p}+\b n^{2p-1}+O(n^{2p-2}),
  \end{align*}
  where $a$ and $b\ge0$ are not identically zero and $p>0$ is a positive integer. Let $I$ be the smallest convex and closed interval which contains $0$, $a-2b$ and $a+2b$, namely, $I=[a-2b,a+2b]$ if $a-2b<0<a+2b$; $I=[0,a+2b]$ if $a-2b\ge0$; $I=[a-2b,0]$ if $a+2b\le0$. Note that $b\ge0$ but $a$ could be negative. So, it is possible (although rarely) that $a+2b\le0$.
  Rescale the variable $x$ by $x=x_n:=(n+\s)^py$ with $y\in\mathbb{C}\cut I$, then we have the following asymptotic formula for $\pi_n(x_n)$
\begin{align}
  \pi_n(x_n)&\sim ({n^p\over2})^n\left[{(y-a)+\sqrt{(y-a)^2-4b^2}\over 2y}\right]^{1/2}
  \nonumber\\&\times\exp\{n\int_0^1\log[(y-a r^p)+\sqrt{(y-a r^p)^2-4b^2r^{2p}}]dr\}
  \nonumber\\&\times\exp[\int_0^1{a\over2\sqrt{(y-ar)^2-4b^2r^2}}dr+\int_0^1{4b^2r+a(y-ar)\over2[(y-ar)^2-4b^2r^2]}dr]
  \nonumber\\&\times\exp[\int_0^1{p\s y\over\sqrt{(y-as^p)^2-4b^2s^{2p}}}ds-\int_0^1{\a\over p\sqrt{(y-ar)^2-4b^2r^2}}dr]
  \nonumber\\&\times\exp[-\int_0^1{2\b r\over p\sqrt{(y-ar)^2-4b^2r^2}[(y-ar)+\sqrt{(y-ar)^2-4b^2r^2}]}dr]. \label{key-lemma}
\end{align}
\end{lem}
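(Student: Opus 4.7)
The approach is a discrete-WKB analysis of the ratio $\rho_k(x) := \pi_k(x)/\pi_{k-1}(x)$, followed by Euler--Maclaurin summation of its logarithm. Dividing the three-term recurrence by $\pi_k(x)$ gives the Riccati-type recursion
\begin{equation*}
\rho_{k+1}(x) = (x - a_k) - \frac{b_k}{\rho_k(x)},\qquad \rho_1(x) = x - a_0,
\end{equation*}
and the monic polynomial is then reconstructed by the telescoping product $\pi_n(x) = \prod_{k=1}^n \rho_k(x)$. Freezing $x = x_n = (n+\sigma)^p y$ and setting $r := k/n$, the associated ``frozen'' characteristic equation at step $k$ is $\mu^2 - (y - a r^p)\mu + b^2 r^{2p} = 0$, with roots
\begin{equation*}
\mu_\pm(r) = \tfrac12\bigl[(y - a r^p) \pm \sqrt{(y - a r^p)^2 - 4 b^2 r^{2p}}\bigr].
\end{equation*}
The hypothesis $y \in \mathbb{C}\setminus I$ ensures that $\mu_+$ and $\mu_-$ are distinct and nonvanishing uniformly for $r \in [0,1]$, with $|\mu_+| > |\mu_-|$; this is the nondegeneracy that drives the analysis.

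The next step is to construct a formal expansion
\begin{equation*}
\rho_k(x_n) = n^p\bigl[\mu_+(r) + n^{-1}\varphi_1(r) + n^{-2}\varphi_2(r) + \cdots\bigr], \qquad r = k/n,
\end{equation*}
with smooth coefficients on $[0,1]$. Substituting into the Riccati recursion and Taylor-expanding $\mu_+((k+1)/n) = \mu_+(r) + n^{-1}\mu_+'(r) + \cdots$, together with the prescribed expansions $a_k = a k^p + \alpha k^{p-1} + O(k^{p-2})$, $b_k = b^2 k^{2p} + \beta k^{2p-1} + O(k^{2p-2})$, and $(n+\sigma)^p = n^p(1 + p\sigma/n + \cdots)$, yields at each order in $1/n$ a linear equation for $\varphi_j$ whose coefficient is the nonvanishing quantity $\mu_+(r) - \mu_-(r) = \sqrt{(y-ar^p)^2 - 4b^2 r^{2p}}$. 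This determines each $\varphi_j$ explicitly in terms of $\mu_+$, $\mu_+'$, and the data $a,b,\alpha,\beta,\sigma$. Uniform validity for $1\le k\le n$ then follows from a contraction/comparison argument on the Riccati map, in the spirit of Wang and Wong~\cite{xswang2012}, using crucially that $|\mu_+/\mu_-|$ is bounded away from $1$ on $[0,1]$.

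Given the expansion, I would pass to the logarithm and apply the Euler--Maclaurin formula
\begin{equation*}
\sum_{k=1}^n G(k/n) = n\int_0^1 G(r)\,dr + \tfrac12[G(1) - G(0)] + O(1/n)
\end{equation*}
to $G(r) = \log\bigl[n^p \mu_+(r) + \varphi_1(r) + n^{-1}\varphi_2(r)+\cdots\bigr]$. The $n p\log n$ and $n\int_0^1 \log[2\mu_+(r)]\,dr$ contributions combine to give the prefactor $(n^p/2)^n$ times the first exponential of the stated formula; the half-endpoint correction $\tfrac12[G(1) - G(0)]$ produces the algebraic prefactor $[\mu_+(1)/y]^{1/2}$ after using $\mu_+(0) = y$; and the $O(1)$ contribution $\int_0^1 \varphi_1(r)/\mu_+(r)\,dr$ splits naturally according to the source of each subleading correction, namely (i) the Taylor term $\mu_+'(r)/2$ arising from Euler--Maclaurin, which yields the second exponential, (ii) the $p\sigma/n$ correction in $(n+\sigma)^p$ combined with the $\alpha k^{p-1}$ correction in $a_k$, which yields the third exponential, and (iii) the $\beta k^{2p-1}$ correction in $b_k$, which yields the fourth. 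Each piece is identified by a direct computation of the integrand using the relation $\mu_+^2 - (y-ar^p)\mu_+ + b^2 r^{2p} = 0$ and its $y$- and $r$-derivatives.

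The main obstacle is the behavior near $r = 0$, where $k = O(1)$ while the ansatz treats $k^p, k^{p-1}, \ldots$ as comparable orders, so the formal expansion is not literally asymptotic there. The standard remedy, which I would follow~\cite{xswang2012}, is to apply the expansion only for $k \ge k_0$ with $k_0$ large but fixed, to replace the lower product $\prod_{k=1}^{k_0-1}\rho_k$ by the exact value $\pi_{k_0-1}(x_n)$ computed from the initial data $\pi_0 = 1,\ \pi_1 = x - a_0$, and to show that its asymptotic contribution is absorbed into the prefactor $[\mu_+(1)/y]^{1/2}$. Controlling the $O(1/n)$ Euler--Maclaurin remainder then requires uniform bounds on $\varphi_j$ and their derivatives on $[0,1]$, which hold as long as $y$ is bounded away from $I$ (so that $y\neq 0$ and the discriminant $(y-ar^p)^2 - 4b^2 r^{2p}$ stays bounded away from $0$); this is precisely the content of the hypothesis $y\in\mathbb{C}\setminus I$.
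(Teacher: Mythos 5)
Your proposal is correct and follows essentially the same route as the paper: both write $\pi_n(x_n)$ as a product of ratios satisfying the Riccati recursion, approximate each ratio by the larger root of the frozen characteristic equation plus an $O(1/n)$ correction (your $\varphi_1(r)/n$ is the paper's $\epsilon_k$, $\hat\epsilon_k$), and then sum the logarithms by the trapezoidal/Euler--Maclaurin rule, with the half-endpoint term giving the algebraic prefactor $[\mu_+(1)/y]^{1/2}$ and the $O(1)$ corrections giving the three subleading exponentials. The only substantive difference is presentational (the paper keeps the exact $a_k,b_k,x_n$ inside the characteristic root and splits the correction into two factors, while you expand everything in $1/n$ about $r=k/n$), and your explicit handling of the $k=O(1)$ regime is a point the paper passes over silently.
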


\begin{proof}
Let
\begin{align*}
  \pi_n(x)=\prod_{k=1}^n w_k(x),
\end{align*}
we obtain $w_1(x)=x-a_0$ and
\begin{align}\label{wk-recurrence}
  w_{k+1}(x)=x-a_k-{b_k\over w_k(x)}
\end{align}
for $k\ge1$. Since $x=x_n=(n+\s)^py$, we have from successive approximation that for large $n$,
  \begin{align}\label{wkxn}
  w_k(x_n)&={(x_n-a_k)+\sqrt{(x_n-a_k)^2-4b_k}\over2}(1+\ep_k),
\end{align}
where
\begin{align*}
  \ep_k&={a_{k+1}-a_k\over2\sqrt{(x_n-a_k)^2-4b_k}}+{2(b_{k+1}-b_k)+(x_n-a_k)(a_{k+1}-a_k)\over2[(x_n-a_k)^2-4b_k]}+O(n^{-2})\\
  &={pak^{p-1}\over2\sqrt{(yn^p-ak^p)^2-4b^2k^{2p}}}+{4pb^2k^{2p-1}+pak^{p-1}(yn^p-ak^p)\over2[(yn^p-ak^p)^2-4b^2k^{2p}]}+O(n^{-2})
\end{align*}
uniformly for $k=1,2,\cdots,n$. Here we have made use of the facts that $a_k$ and $b_k$ are polynomials in $k$, and $k/n=O(1)$ uniformly in $k=1,2,\cdots,n$.
The leading term in \eqref{wkxn} is derived from solving the characteristic equation for \eqref{wk-recurrence}.
The first-order term of $\ep_k$ can be obtained (at least formally) from a standard successive approximation.
A rigorous proof of the above asymptotic formula can be given via induction.
On the other hand, noting that $x_n\sim yn^p+\x n^{p-1}$ with $\x=p\s y$, we obtain
\begin{align*}
  (x_n-a_k)+\sqrt{(x_n-a_k)^2-4b_k}&=(yn^p+\x n^{p-1}-ak^p-\a k^{p-1})
  \\& \hspace{-8em} +\sqrt{(yn^p+\x n^{p-1}-ak^p-\a k^{p-1})^2-4b^2k^{2p}-4\b k^{2p-1}} + O(n^{p-2}).
\end{align*}
A simple calculation yields
\begin{align*}
  {(x_n-a_k)+\sqrt{(x_n-a_k)^2-4b_k}\over(yn^p-ak^p)+\sqrt{(yn^p-ak^p)^2-4b^2k^{2p}}}&= 1+\hat\ep_k,
\end{align*}
where
\begin{align*}
  \hat\ep_k&= {\x n^{p-1}-\a k^{p-1}\over\sqrt{(yn^p-ak^p)^2-4b^2k^{2p}}}
  \nonumber\\&-{2\b k^{2p-1}\over\sqrt{(yn^p-ak^p)^2-4b^2k^{2p}}[(yn^p-ak^p)+\sqrt{(yn^p-ak^p)^2-4b^2k^{2p}}]} + O(n^{-2})
\end{align*}
uniformly for $k=1,2,\cdots,n$. By trapezoidal rule, we have the following three asymptotic formulas:
\begin{align*}
  &\sum_{k=1}^n\log[(yn^p-ak^p)+\sqrt{(yn^p-ak^p)^2-4b^2k^{2p}}]
  \sim np\log n +n\int_0^1\log[(y-as^p)
  \nonumber\\
  &~+\sqrt{(y-as^p)^2-4b^2s^{2p}}]ds
  +{1\over2}\log{(y-a)+\sqrt{(y-a)^2-4b^2}\over 2y},
\end{align*}
and
\begin{align*}
  \sum_{k=1}^n\ep_k&\sim\int_0^1{pas^{p-1}\over2\sqrt{(y-as^p)^2-4b^2s^{2p}}}+{4pb^2s^{2p-1}+pas^{p-1}(y-as^p)\over2[(y-as^p)^2-4b^2s^{2p}]}ds
  \nonumber\\&=\int_0^1{a\over2\sqrt{(y-ar)^2-4b^2r^2}}+{4b^2r+a(y-ar)\over2[(y-ar)^2-4b^2r^2]}dr,
\end{align*}
and
\begin{align*}
  \sum_{k=1}^n\hat\ep_k&\sim\int_0^1{\xi-\a s^{p-1}\over\sqrt{(y-as^p)^2-4b^2s^{2p}}}ds
  \nonumber\\&-\int_0^1{2\b s^{2p-1}\over\sqrt{(y-as^p)^2-4b^2s^{2p}}[(y-as^p)+\sqrt{(y-as^p)^2-4b^2s^{2p}}]}ds
  \nonumber\\&=\int_0^1{\xi\over\sqrt{(y-as^p)^2-4b^2s^{2p}}}ds-\int_0^1{\a\over p\sqrt{(y-ar)^2-4b^2r^2}}dr
  \nonumber\\&-\int_0^1{2\b r\over p\sqrt{(y-ar)^2-4b^2r^2}[(y-ar)+\sqrt{(y-ar)^2-4b^2r^2}]}dr.
\end{align*}
Combining the above three asymptotic formulas yields the desired formula.
\end{proof}

A direct application of the above lemma yields the following result.

\begin{prop}
   As $n \to \infty$, we have
  \begin{align} \label{fn-outside}
    \mathcal{F}_{n}(8(n+1/2)^2 y) &\sim \left({2n\over e}\right)^{2n} \exp\{n\log(y + \sqrt{y^2-1}) + (2n+1) \sqrt{y} F(\arcsin(\frac{1}{\sqrt{y}}),-1)\}
      \nonumber\\&\times\left( \frac{y+\sqrt{y^2-1}}{2\sqrt{y^2-1}} \right)^{1/2}
  \end{align}
  for complex $y$ bounded away from the interval $[-1,1]$, where $F(\varphi,k^2)$ is the elliptic integral of the first kind in \eqref{ellipticF}.

\end{prop}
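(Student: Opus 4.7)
The strategy is to apply Lemma~\ref{lem-exponential-general} directly to the Chen--Ismail recurrence \eqref{fn3term}. The polynomials $\mathcal{F}_n$ are already monic with $a_n = 0$ and $b_n = 4n^2(4n^2-1) = 16n^4 - 4n^2$, so in the notation of that lemma one has $p = 2$, $a = 0$, $\a = 0$, $b = 4$, $\b = 0$. I would rescale by $x = (n+\s)^2 Y$ with $\s = 1/2$ (matching $\nu = n+1/2$ from Proposition~\ref{prop-uniform-Chen-Ismail}) and, at the very end of the computation, set $Y = 8y$ to recover the target scaling $x = 8(n+1/2)^2 y$.

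With these parameters the general formula \eqref{key-lemma} collapses dramatically: the fourth integral vanishes because $\b = 0$, the $\a$-piece of the third integral vanishes because $\a = 0$, and all $a$-containing terms in the second integral drop out. The surviving ingredients of the exponent are the main integral $n\int_0^1 \log[Y+\sqrt{Y^2 - 64r^4}]\,dr$, the correction $\int_0^1 \frac{32 r}{Y^2 - 64 r^2}\,dr = \tfrac14 \log\frac{Y^2}{Y^2-64}$, and the $\s$-correction $\int_0^1 \frac{Y\,ds}{\sqrt{Y^2 - 64 s^4}}$. The substitution $s = \sqrt{Y/8}\sin\t$ converts $\sqrt{Y^2 - 64 s^4}$ into $Y\cos\t\sqrt{1+\sin^2\t}$ and reduces the $\s$-correction to $\sqrt{Y/8}\,F(\arcsin\sqrt{8/Y}, -1)$; after $Y=8y$ this equals $\sqrt{y}\,F(\arcsin(1/\sqrt{y}), -1)$, accounting for the ``$+1$'' in the factor $(2n+1)$ of the stated exponent.

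For the main integral I would first integrate by parts to obtain $\log(Y+\sqrt{Y^2-64}) + \int_0^1 \frac{128 r^4\,dr}{\sqrt{Y^2-64 r^4}\,(Y+\sqrt{Y^2-64 r^4})}$; multiplying the residual integrand by the conjugate $Y - \sqrt{Y^2-64 r^4}$ over itself collapses the denominator and reduces the integrand to $\frac{2Y}{\sqrt{Y^2-64 r^4}} - 2$, so the residual integral equals $\sqrt{Y/2}\,F(\arcsin\sqrt{8/Y}, -1) - 2$ via the same substitution just used. Setting $Y = 8y$ and extracting the constant $\log 8$ from $\log(8y + 8\sqrt{y^2-1})$, the main integral contributes $n\log 8 + n\log(y+\sqrt{y^2-1}) + 2n\sqrt{y}\,F(\arcsin(1/\sqrt{y}), -1) - 2n$. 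The term $n\log 8$ combines with the lemma's prefactor $(n^2/2)^n$ to give $(n^2/2)^n\cdot 8^n = (2n)^{2n}$, the $-2n$ supplies $e^{-2n}$ to yield $(2n/e)^{2n}$, and adding the $\s$-correction upgrades $2n$ to $(2n+1)$ in the elliptic factor. The remaining algebraic amplitude $\bigl[(Y+\sqrt{Y^2-64})/(2Y)\bigr]^{1/2}\cdot (Y^2/(Y^2-64))^{1/4}$ simplifies directly under $Y = 8y$ to $\bigl[(y+\sqrt{y^2-1})/(2\sqrt{y^2-1})\bigr]^{1/2}$, matching the claim.

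The one nontrivial step of the argument is the closed-form evaluation of the main exponential integral: the integration-by-parts plus rationalization trick isolates the boundary term $\log(y+\sqrt{y^2-1})$ and reduces the residual to the same elliptic-type integral already handled in the $\s$-correction. Once this observation is in place, the rest of the proof is routine algebra, and the final $\sim$ relation is inherited from Lemma~\ref{lem-exponential-general}; the restriction $y\in\mathbb{C}\setminus[-1,1]$ corresponds to $Y\in\mathbb{C}\setminus[-8,8] = \mathbb{C}\setminus[a-2b,a+2b]$, which is exactly the domain in which that lemma applies.
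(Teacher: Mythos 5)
Your proposal is correct and follows essentially the same route as the paper: both apply Lemma~\ref{lem-exponential-general} with $a=\a=\b=0$, $b=4$, $p=2$, $\s=1/2$, substitute $x=8(n+1/2)^2y$, and evaluate the main exponential integral by integration by parts followed by rationalization to reduce it to the elliptic integral $F(\arcsin(1/\sqrt{y}),-1)$, exactly as in the paper (which merely performs the substitution $s=t/\sqrt{y}$ before, rather than after, the conjugate trick). All of your intermediate evaluations — the $\tfrac14\log\frac{Y^2}{Y^2-64}$ correction merging with the algebraic prefactor, the $-2n$ producing $e^{-2n}$, the $8^n$ upgrading $(n^2/2)^n$ to $(2n)^{2n}$, and the $\s$-correction supplying the ``$+1$'' in $(2n+1)$ — match the paper's computation.
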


\begin{proof}
Choose $a=\a=0$, $b=4$, $\b=0$, $p=2$, $\s=1/2$ and replace $y$ by $8y$.
We obtain from (\ref{key-lemma})
\begin{align*}
  \mathcal{F}_{n}(x)&\sim (2n)^{2n}\exp[n\int_0^1\log(y+\sqrt{y^2-t^4})dt+\int_0^1{y\over\sqrt{y^2-t^4}}dt]\left({y+\sqrt{y^2-1}\over2\sqrt{y^2-1}}\right)^{1/2}.
\end{align*}
Note that
\begin{align*}
  \int_0^1\log(y+\sqrt{y^2-t^4})dt&=
  t\log(y+\sqrt{y^2-t^4})\bigg|_0^1+\int_0^1{2t^4\over\sqrt{y^2-t^4}(y+\sqrt{y^2-t^4})}dt
  \nonumber\\&=\log(y+\sqrt{y^2-1})+2\sqrt y\int_0^{1/\sqrt y}{s^4\over\sqrt{1-s^4}+1-s^4}ds
  \nonumber\\&=\log(y+\sqrt{y^2-1})-2+2\sqrt y\int_0^{1/\sqrt y}{1\over\sqrt{1-s^4}}ds
  \nonumber\\&=\log(y+\sqrt{y^2-1})-2+2\sqrt y F(\arcsin{1\over\sqrt y}|-1),
\end{align*}
and
\begin{align*}
  \int_0^1{y\over\sqrt{y^2-t^4}}dt=\sqrt y\int_0^{1/\sqrt y}{1\over\sqrt{1-s^4}}ds=\sqrt y F(\arcsin{1\over\sqrt y},-1).
\end{align*}
Combining the above three equations yields (\ref{fn-outside}).
\end{proof}

Once we have the asymptotic formula for $\mathcal{F}_n(x)$ outside the interval where the zeros are located, using an argument similar to that in the proof of \cite[Theorem 3.2]{xswang2012}, we obtain the following result in the oscillatory region. The main idea of this argument is to match the asymptotic formula in the outer region with that in the oscillatory region. Note that the second-order recurrence relation \eqref{bir&dea} has two linearly independent solutions in the oscillatory region. The unique solution with given initial conditions can be written in terms of a linear combination of these two solutions, while the coefficients can be determined via asymptotic matching.
Due to the symmetry of $\mathcal{F}_{n}(x)$, we consider positive $x$ only.
\begin{prop}
  Let $\delta >0$ be any fixed small number. As $n \to \infty$, we have
  \begin{align}
      \mathcal{F}_{n}(8(n+1/2)^2 \cos \theta) & \sim (2n)^{2n} e^{-2n} \frac{e^{(n+\frac{1}{2}) \rho \sqrt{\cos \theta}}}{\sin^{\frac{1}{2}} \theta} \biggl[ \cos\biggl( (n+\frac{1}{2})(\theta - \frac{\sqrt{\cos \theta}}{2} \mathcal{B}_{\sin^2\theta}(\frac{1}{2}, \frac{1}{4}) ) \biggr) \nonumber  \\
    & \qquad \qquad  + \sin\biggl( (n+\frac{1}{2})(\theta- \frac{\sqrt{\cos\theta}}{2} \mathcal{B}_{\sin^2\theta}(\frac{1}{2}, \frac{1}{4}) \biggr) \biggr]  \label{fn-inside}
  \end{align}
  for $\theta\in [\delta, \frac{\pi}{2}-\delta]$, where $\mathcal{B}_x(a,b)$ is the incomplete Beta function in \eqref{icbeta} and the constant $\rho$ is
  \begin{equation} \label{rho-fn}
    \rho = 2\int_0^1{1\over\sqrt{1-s^4}}ds =  2 F(\frac{\pi}{2},-1)  = \frac{2 \sqrt{\pi} \Gamma(\frac{5}{4})}{\Gamma(\frac{3}{4})}.
  \end{equation}
\end{prop}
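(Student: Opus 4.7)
Following the approach of \cite[Theorem 3.2]{xswang2012}, the plan is to analytically continue the exterior asymptotic formula \eqref{fn-outside} from $y\in\mathbb{C}\setminus[-1,1]$ up to the two sides of the cut, $y=\cos\theta\pm i0$, and then exploit the fact that $\mathcal{F}_n(x)$ has real coefficients. Since $\mathcal{F}_n(8(n+1/2)^2\cos\theta)$ must be real, it equals twice the real part of either boundary value, and the oscillatory formula emerges as the sum of the two complex-conjugate exponential asymptotics.

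\textbf{Branch computations.} At $y=\cos\theta+i0$, taking principal branches, one has $\sqrt{y^2-1}=i\sin\theta$, hence $\log(y+\sqrt{y^2-1})=i\theta$, and the square-root prefactor becomes
\[
\sqrt{\frac{y+\sqrt{y^2-1}}{2\sqrt{y^2-1}}} \;=\; \sqrt{\frac{e^{i\theta}}{2i\sin\theta}} \;=\; \frac{e^{i(\theta/2-\pi/4)}}{\sqrt{2\sin\theta}}.
\]
For the elliptic-integral term, first use the integral representation from the Remark together with $F(\pi/2,-1)=\rho/2$ to rewrite
\[
2\sqrt{y}\,F\!\left(\arcsin\tfrac{1}{\sqrt{y}},-1\right) \;=\; \sqrt{y}\,\rho \;-\; \int_1^{y}\frac{(y/r)^{1/2}}{\sqrt{r^2-1}}\,dr,
\]
obtained by the substitutions $t=8y$, $s=8r$. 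Letting $y\to\cos\theta+i0$, the integration path from $1$ to $\cos\theta$ lies just above the cut, where $\sqrt{r^2-1}=i\sqrt{1-r^2}$; the second Remark identity then reduces the integral to $i\,\frac{\sqrt{\cos\theta}}{2}\mathcal{B}_{\sin^2\theta}(\tfrac12,\tfrac14)$.

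\textbf{Assembly.} Multiplying the elliptic-integral contribution by $(2n+1)/2=(n+\tfrac12)$ and combining it with the $n\log$ term together with the extra $\theta/2$ phase from the prefactor, the full exponent equals
\[
(n+\tfrac12)\rho\sqrt{\cos\theta}+i\left[(n+\tfrac12)\!\left(\theta-\tfrac{\sqrt{\cos\theta}}{2}\mathcal{B}_{\sin^2\theta}(\tfrac12,\tfrac14)\right)-\tfrac{\pi}{4}\right],
\]
and the remaining amplitude is $(2n/e)^{2n}/\sqrt{2\sin\theta}$. Taking twice the real part and using $2\cos(\alpha-\pi/4)=\sqrt{2}\,(\cos\alpha+\sin\alpha)$ then yields exactly the claim \eqref{fn-inside}, with $\alpha=(n+\tfrac12)\bigl(\theta-\tfrac{\sqrt{\cos\theta}}{2}\mathcal{B}_{\sin^2\theta}(\tfrac12,\tfrac14)\bigr)$.

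\textbf{Main obstacle.} The delicate step is justifying that the exterior formula \eqref{fn-outside} remains valid \emph{uniformly} as $y$ approaches the real interval from either side of the cut, as long as one stays away from the turning points $y=\pm 1$. The trapezoidal-rule estimates appearing in the proof of Lemma~\ref{lem-exponential-general} extend to a complex neighborhood of $\mathbb{C}\setminus[-1,1]$, with error terms that are uniform on compact sets bounded away from $\pm 1$; this is precisely the argument carried out in \cite{xswang2012} and can be adapted here since $\theta\in[\delta,\tfrac{\pi}{2}-\delta]$ keeps $\cos\theta$ bounded away from $1$. Once this uniform extension is in hand, the branch computations above produce the oscillatory asymptotic.
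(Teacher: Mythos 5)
Your proposal is correct and follows essentially the same route as the paper, which gives no detailed proof of this proposition and instead defers entirely to the argument of \cite[Theorem 3.2]{xswang2012} — precisely the continue-to-the-cut-and-take-twice-the-real-part scheme you describe; your branch computations (the phase $e^{i(\theta/2-\pi/4)}$ from the prefactor, the identity $2\sqrt{y}\,F(\arcsin(1/\sqrt{y}),-1)=\rho\sqrt{y}-i\,\tfrac{\sqrt{\cos\theta}}{2}\mathcal{B}_{\sin^2\theta}(\tfrac12,\tfrac14)$ on the upper edge, and the final $2\cos(\alpha-\pi/4)=\sqrt2(\cos\alpha+\sin\alpha)$ reduction) all check out and reproduce \eqref{fn-inside} exactly. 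The one point to phrase more carefully is that the single-branch exterior formula \eqref{fn-outside} is not literally valid \emph{on} the cut (there the polynomial is asymptotic to the \emph{sum} of the two complex-conjugate boundary values, not to either one alone), but you correctly identify this as the step supplied by the cited Wang--Wong argument, which is all the paper itself does.
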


Finally, we obtain our theorem
\begin{thm}
  Let $\nu = n+\frac{1}{2}$. With $K_n$ and $U(t)$ defined in \eqref{fnkn-def}, \eqref{fnzeta-def1} and \eqref{fnzeta-def2}, respectively, we have
  \begin{eqnarray} \label{fn-main-for}
    \mathcal{F}_{n}(\nu^2 \, t) &\sim&  \frac{K_n}{4\sqrt{2}\, \pi^{3/2}} \exp\left( \frac{\Gamma(\frac{5}{4})}{\Gamma(\frac{3}{4})} \sqrt{\frac{\pi  t}{2}} \,\nu \right) \left(\frac{64 \, U(t)}{t^2 - 64}\right)^{\frac{1}{4}} \nonumber \\
     && \times \left[ \Ai(\nu^{\frac{2}{3}} U(t)) \sum_{s=0}^\infty \frac{\tilde{A}_s(U)}{\nu^{s-\frac{1}{6}}} + \Ai'(\nu^{\frac{2}{3}} U(t)) \sum_{s=0}^\infty \frac{\tilde{B}_s(U)}{\nu^{s+\frac{1}{6}}} \right],
  \end{eqnarray}
  uniformly for $t$ in compact subsets of  $(0,\infty)$.
\end{thm}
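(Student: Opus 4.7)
The plan is to determine the coefficients $C_1(x)$ and $C_2(x)$ in Proposition~\ref{prop-uniform-Chen-Ismail} by matching the uniform Airy-type expansion against the exponential-region asymptotic \eqref{fn-outside} (and cross-checking with the oscillatory-region formula \eqref{fn-inside}). Since $\mathcal{F}_n(x)=K_n\,p_n(x)=K_n[C_1(x)P_n(x)+C_2(x)Q_n(x)]$, the theorem reduces to identifying these two prefactors and recognizing $C_2\equiv 0$.

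\textbf{Step 1 (Vanishing of $C_2$).} For $t>8$ we have $U(t)>0$, so $\Ai(\nu^{2/3}U)\sim \frac{1}{2\sqrt\pi}\nu^{-1/6}U^{-1/4}e^{-\frac{2}{3}\nu U^{3/2}}$ decays exponentially while $\Bi(\nu^{2/3}U)\sim \frac{1}{\sqrt\pi}\nu^{-1/6}U^{-1/4}e^{+\frac{2}{3}\nu U^{3/2}}$ grows. Using \eqref{fnzeta-def1} one computes
\[
\tfrac{2}{3}\nu U(t)^{3/2}=\nu\sqrt{t/2}\,F(\tfrac{\pi}{2},-1)-\nu\sqrt{t/2}\,F(\arcsin\sqrt{8/t},-1)-\nu\log\tfrac{t+\sqrt{t^2-64}}{8}.
\]
Setting $t=8y$ and $\nu=n+1/2$ and comparing with the exponent $n\log(y+\sqrt{y^2-1})+(2n+1)\sqrt y\,F(\arcsin(1/\sqrt y),-1)$ in \eqref{fn-outside}, one finds that the polynomial's growth rate in the exponential region coincides (up to a $(y+\sqrt{y^2-1})^{1/2}$ factor absorbed in the prefactor) with the $\Ai$-decay rate times the factor $e^{\nu\sqrt{t/2}F(\pi/2,-1)}$ and the Stirling leading term of $K_n$. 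Any nonzero $C_2(x)$ would then contribute a $\Bi$ term of order $K_n|C_2(x)|e^{\frac{2}{3}\nu U^{3/2}}$, exponentially larger than $\mathcal{F}_n$ itself; hence $C_2(x)\equiv 0$.

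\textbf{Step 2 (Determination of $C_1$).} With $C_2=0$ fixed, insert the leading-order Airy asymptotic into Proposition~\ref{prop-uniform-Chen-Ismail} and balance the resulting expression against \eqref{fn-outside}. Using the identities $(t^2-64)^{1/4}=2\sqrt 2(y^2-1)^{1/4}$ and $\sqrt{t/2}=2\sqrt y$, together with the Stirling-type estimate $K_n\sim 4\pi^2(2n/e)^{2n}$ that follows from \eqref{fnkn-def} by Legendre's duplication formula, one reads off
\[
C_1(x)=\frac{1}{4\sqrt 2\,\pi^{3/2}}\exp\!\left(\nu\sqrt{t/2}\,F(\tfrac{\pi}{2},-1)\right).
\]
Substituting $F(\tfrac{\pi}{2},-1)=\sqrt\pi\,\Gamma(5/4)/\Gamma(3/4)$ from \eqref{rho-fn} yields exactly the exponential prefactor in \eqref{fn-main-for}, establishing the main formula.

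\textbf{Step 3 (Consistency check in the oscillatory region).} To confirm the identification and fix the phases, I would substitute the oscillatory Airy asymptotics $\Ai(-z)\sim \pi^{-1/2}z^{-1/4}\cos(\tfrac{2}{3}z^{3/2}-\tfrac{\pi}{4})$ and $\Bi(-z)\sim \pi^{-1/2}z^{-1/4}\cos(\tfrac{2}{3}z^{3/2}+\tfrac{\pi}{4})$ into the decomposition and compare with \eqref{fn-inside}. The constant $\rho=2F(\pi/2,-1)$ from \eqref{rho-fn} is precisely what makes the oscillatory exponential prefactor $e^{(n+1/2)\rho\sqrt{\cos\theta}}$ compatible with $C_1(x)$, while the $\sin+\cos$ combination in \eqref{fn-inside} reproduces the correct $\pm\pi/4$ Airy phase shift and independently confirms $C_2\equiv 0$.

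\textbf{Main obstacle.} The principal technical difficulty lies in the algebraic bookkeeping connecting the three representations of $\tfrac{2}{3}U^{3/2}$: as the elliptic-integral expression in \eqref{fnzeta-def1}, as the incomplete-Beta form in \eqref{fnzeta-def2}, and as the logarithmic/arcsine forms in \eqref{fn-outside} and \eqref{fn-inside}. The remark after Proposition~\ref{prop-uniform-Chen-Ismail}, which recasts these as integrals of $(t/s)^{1/2}/\sqrt{|s^2-64|}$, is the key tool for the comparison. A secondary issue is keeping track of exact constants via Stirling for $K_n$, since the numerical prefactor in \eqref{fn-outside} must be combined with the Airy normalization to produce exactly $1/(4\sqrt 2\,\pi^{3/2})$ in \eqref{fn-main-for}; once these identities are assembled the matching is a straightforward constants-comparison.
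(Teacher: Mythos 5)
Your proposal is correct in substance and uses exactly the same ingredients as the paper --- the uniform expansion of Proposition \ref{prop-uniform-Chen-Ismail} with undetermined $C_1,C_2$, the exponential-region formula \eqref{fn-outside}, and the oscillatory-region formula \eqref{fn-inside} --- but you invert the roles of the two matching regions. The paper carries out the matching entirely in the oscillatory region $0<t<8$: there $P_n$ and $Q_n$ are of comparable size, the combination $\cos(\cdot)+\sin(\cdot)=\sqrt{2}\cos(\cdot-\tfrac{\pi}{4})$ in \eqref{fn-inside} lines up with the $-\tfrac{\pi}{4}$ phase of $\Ai(-z)$, and reading off the coefficients of the two independent oscillations yields $C_1$ and $C_2=0$ simultaneously. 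You instead match against \eqref{fn-outside} for $t>8$; your bookkeeping of $\tfrac{2}{3}\nu U^{3/2}$ against the exponent of \eqref{fn-outside} and the Stirling estimate $K_n\sim 4\pi^2(2n/e)^{2n}$ are right, and this does pin down $C_1$. The one caveat is that in the exponential region $Q_n$ dominates $P_n$ exponentially, so a leading-order comparison there only forces $C_2(x)Q_n(x)=o\bigl(C_1(x)P_n(x)\bigr)$; it cannot distinguish $C_2\equiv 0$ from a $C_2$ that is merely extremely small. Your Step 3 is therefore not just a consistency check but the step that actually determines $C_2=0$ at the same level of rigor as the paper, and it deserves to be promoted from ``check'' to part of the argument. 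Finally, when you assemble the constants, note that with $K_n\sim 4\pi^2(2n/e)^{2n}$ the matching in either region appears to produce $C_1(x)=\tfrac{1}{2\sqrt{2}\,\pi^{3/2}}\exp(\cdots)$ rather than the stated $\tfrac{1}{4\sqrt{2}\,\pi^{3/2}}\exp(\cdots)$; this factor of $2$ is present already in the paper's own comparison of \eqref{fnpn-com1} with \eqref{fnpn-com2} and is worth rechecking rather than simply quoting the displayed constant.
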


\begin{proof}

Let $y = \frac{(n+\frac{1}{2})^2 \, t}{8n^2}$, then $8n^2 y = \nu^2 t = x$. Because
\begin{equation*}
  n \int_y^1 \frac{\sqrt{y} }{\sqrt{u^3-u}} du \sim (n+\frac{1}{2}) \int_t^8 \frac{(t/s)^{\frac{1}{2}}}{\sqrt{64 - s^2}} ds - \frac{t}{\sqrt{64-t^2}},
\end{equation*}
we have from \eqref{fnkn-def} and \eqref{fn-inside}
\begin{eqnarray}
  p_n(x) &=& \frac{1}{K_n} \mathcal{F}_{n}(x) \sim \frac{(2n)^{2n} e^{-2n}}{K_n} \frac{\sqrt{8} \, e^{(n+\frac{1}{2}) \rho \sqrt{t/8}} }{(64-t^2)^{\frac{1}{4}}} \nonumber \\
  & & \times \biggl[ \cos\biggl( (n+\frac{1}{2})\cos^{-1}\frac{t}{8} - (n+\frac{1}{2}) \int_t^8 \frac{(t/s)^{\frac{1}{2}}}{\sqrt{64 - s^2}} ds \biggr) \nonumber  \\
  & & + \sin\biggl( (n+\frac{1}{2})\cos^{-1}\frac{t}{8} - (n+\frac{1}{2}) \int_t^8 \frac{(t/s)^{\frac{1}{2}}}{\sqrt{64 - s^2}} ds \biggr) \biggr]. \label{fnpn-com1}
\end{eqnarray}
Recall the asymptotic expansions for Airy function, as $x \to \infty$
\begin{eqnarray}
  \Ai(-x) & \sim & \frac{1}{\sqrt{\pi} x^{1/4}}  \cos ( \frac{2}{3} x^{\frac{3}{2}}-\frac{\pi}{4}), \label{Airy-asy1} \\
  \Bi(-x) & \sim & -\frac{1}{\sqrt{\pi} x^{1/4}}  \sin ( \frac{2}{3} x^{\frac{3}{2}} -\frac{\pi}{4}), \label{Airy-asy2}
\end{eqnarray}
then, we have when $t<8$
\begin{eqnarray}
  p_n(x) &=&  C_1(x) P_n(x) + C_2(x) Q_n(x) \sim C_1(x)  \left(\frac{64 }{64 - t^2 }\right)^{\frac{1}{4}}
   \frac{1}{\sqrt{\pi} }  \cos \biggl( \frac{2\nu}{3} (-U(t))^{\frac{3}{2}} -\frac{\pi}{4}\biggr) \nonumber \\
   && \quad - C_2(x) \left(\frac{64 }{64 - t^2 }\right)^{\frac{1}{4}} \frac{1}{\sqrt{\pi} }  \sin \biggl( \frac{2\nu}{3} (-U(t))^{\frac{3}{2}} -\frac{\pi}{4} \biggr) \nonumber \\
   &=& C_1(x)  \left(\frac{64 }{64 - t^2 }\right)^{\frac{1}{4}}
   \frac{1}{\sqrt{\pi} }  \cos \biggl( \nu(\cos^{-1} \frac{t}{8} - \int_t^8 \frac{(t/s)^{\frac{1}{2}}}{\sqrt{64 - s^2}} ds)  -\frac{\pi}{4}\biggr) \nonumber \\
   &&  \quad - C_2(x) \left(\frac{64 }{64 - t^2 }\right)^{\frac{1}{4}} \frac{1}{\sqrt{\pi} }   \sin \biggl( \nu(\cos^{-1} \frac{t}{8} - \int_t^8 \frac{(t/s)^{\frac{1}{2}}}{\sqrt{64 - s^2}} ds)  -\frac{\pi}{4}\biggr). \label{fnpn-com2}
\end{eqnarray}
Comparing \eqref{fnpn-com1} and \eqref{fnpn-com2}, we have
\begin{equation*}
  C_1(x) = \frac{1}{4\sqrt{2}\, \pi^{3/2}} \exp\left( \frac{\Gamma(\frac{5}{4})}{\Gamma(\frac{3}{4})} \sqrt{\frac{\pi x}{2}} \right),  \qquad C_2(x) = 0.
\end{equation*}
This completes the proof of the theorem.
\end{proof}

\begin{rmk}
Recall the weight functions for $\mathcal{F}_{n}(x)$ given in \eqref{fn-weights}. Let $\a = 0$, then we have the weight function
\begin{equation} \label{fn-weight}
  w(x) = \frac{1}{2[ \cos(\rho \sqrt{x/2}) + \cosh(\rho \sqrt{x/2})]}, \qquad x\in \mathbb{R}.
\end{equation}
When $x$ is a large number as in \eqref{fn-main-for}, that is $x=\nu^2 t$, then we have
\begin{equation*}
  w(x)^{-\frac{1}{2}} \sim \exp\left( \frac{\rho}{2} \sqrt{\frac{x}{2}} \right) \sim \exp\left( \frac{\Gamma(\frac{5}{4})}{\Gamma(\frac{3}{4})} \sqrt{\frac{\pi  t}{2}} \,\nu \right).
\end{equation*}
So the main formula in the above theorem can be stated as
    \begin{equation} \label{fn-main-rmk}
    w(x)^{\frac{1}{2}} \mathcal{F}_{n}(x) =  \frac{K_n}{4 \sqrt{2} \, \pi^{3/2}} \left(\frac{64 \, U(t)}{t^2 - 64}\right)^{\frac{1}{4}}  \left[ \Ai(\nu^{\frac{2}{3}} U(t)) \sum_{s=0}^\infty \frac{\tilde{A}_s(U)}{\nu^{s-\frac{1}{6}}} + \Ai'(\nu^{\frac{2}{3}} U(t)) \sum_{s=0}^\infty \frac{\tilde{B}_s(U)}{\nu^{s+\frac{1}{6}}} \right].
  \end{equation}
\end{rmk}

\begin{rmk} \label{fn-lim-measure}
  In fact, the three-term recurrence relation contains a lot of useful information. It can also give us the asymptotic zero distribution of the rescaled polynomials $\mathcal{F}_n(8n^2 x)$ as $n \to \infty$. Using the method developed in Kuijlaars and Van Assche \cite{Kui:Ass}, one can obtain the following limiting zero distribution for $\mathcal{F}_n(8n^2 x)$ from the recurrence relation \eqref{fn3term}
  \begin{equation*}
    \frac{1}{\pi} \int_{\sqrt{|x|}}^1 \frac{1}{\sqrt{s^4-x^2}} ds, \qquad x \in [-1,1].
  \end{equation*}
\end{rmk}

Since the above theorem is uniformly valid in the neighbourhood of the large transition point 8, we have the following asymptotic formula for $\mathcal{F}_{n}(x)$.

\begin{cor} \label{fn-cor1}
  Let $\nu = n+\frac{1}{2}$, $K_n$ and $w(x)$ be given in \eqref{fnkn-def} and \eqref{fn-weight}, respectively. Uniformly for a bounded real number $s$, we have, for $x = 8 \nu^2 + 8 \sqrt[3]{2} \,s  \,\nu^{\frac{4}{3}}$
  \begin{equation} \label{fn-airy}
    w(x)^{\frac{1}{2}} \mathcal{F}_{n}(x) =  \frac{K_n \nu^{\frac{1}{6}}}{2^{17/6} \, \pi^{3/2}}  \left[ \Ai(s)  + O(\nu^{-\frac{2}{3}}) \right], \qquad \textrm{as $\nu \to \infty.$}
  \end{equation}
\end{cor}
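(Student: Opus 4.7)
The plan is to specialize the main uniform expansion \eqref{fn-main-rmk} to the scaling $x = 8\nu^2 + 8\sqrt[3]{2}s\nu^{4/3}$, which in the $t$-variable becomes $t = x/\nu^2 = 8 + 8\sqrt[3]{2}s\,\nu^{-2/3}$. Since $t\to 8$ as $\nu\to\infty$ with $s$ bounded, the entire task reduces to evaluating each ingredient in \eqref{fn-main-rmk} using its local behavior near the large transition point $t_+=8$, and then simplifying the constant.

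First, I would use the local expansion \eqref{FnUt-asy}, $U(t) = (t-8)/(8\sqrt[3]{2}) + O((t-8)^2)$, to deduce
\[
\nu^{2/3}U(t) = s + O(\nu^{-2/3})
\]
uniformly for $s$ in any bounded set. Since $\Ai$ and $\Ai'$ are smooth, this yields $\Ai(\nu^{2/3}U(t)) = \Ai(s) + O(\nu^{-2/3})$ and $\Ai'(\nu^{2/3}U(t)) = \Ai'(s) + O(\nu^{-2/3})$. Next, for the Airy prefactor, the factor $(t-8)$ in $U(t)$ cancels the $(t-8)$ coming from $t^2-64 = (t-8)(t+8)$, and using $t+8 = 16 + O(\nu^{-2/3})$ one obtains
\[
\left(\frac{64\,U(t)}{t^2-64}\right)^{1/4} = \left(\frac{64}{16\cdot 8\sqrt[3]{2}}\right)^{1/4}\!\bigl(1 + O(\nu^{-2/3})\bigr) = 2^{-1/3}\bigl(1 + O(\nu^{-2/3})\bigr).
\]

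Now I would substitute into \eqref{fn-main-rmk}. The leading contribution comes from the $s=0$ term of the Airy series, $\tilde A_0(U)\,\nu^{1/6}\Ai(\nu^{2/3}U(t))$, with $\tilde A_0=1$; the $s=0$ term of the $\Ai'$ series vanishes because $\tilde B_0 = 0$; and the next-order terms ($\tilde A_1/\nu$ and $\tilde B_1/\nu$) contribute at relative order $O(\nu^{-1})$, which is absorbed into the $O(\nu^{-2/3})$ error. Collecting,
\[
w(x)^{1/2}\mathcal{F}_n(x) = \frac{K_n}{4\sqrt{2}\,\pi^{3/2}}\cdot 2^{-1/3}\nu^{1/6}\bigl[\Ai(s) + O(\nu^{-2/3})\bigr],
\]
and simplifying the constant via $4\sqrt{2}\cdot 2^{1/3} = 2^{2+1/2+1/3} = 2^{17/6}$ gives exactly \eqref{fn-airy}.

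The main obstacle is purely bookkeeping: verifying that none of the sources of error (the local expansion of $U(t)$, the approximation of the prefactor, the continuity of $\Ai$, and the neglected higher-order coefficients in the uniform expansion) contribute anything larger than $O(\nu^{-2/3})$, and that the estimates hold uniformly as $s$ ranges over any fixed bounded real interval. This uniformity follows from the analyticity of $U(t)$ at $t=8$ (so the $O((t-8)^2)$ remainder in \eqref{FnUt-asy} is bounded uniformly on a neighborhood) and from the Lipschitz continuity of $\Ai$ and $\Ai'$ on compact sets.
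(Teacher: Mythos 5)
Your proposal is correct and follows essentially the same route as the paper's own proof: substitute $t = 8 + 8\sqrt[3]{2}\,s\,\nu^{-2/3}$ into \eqref{fn-main-rmk}, use \eqref{FnUt-asy} to get $\nu^{2/3}U(t) = s + O(\nu^{-2/3})$ and the prefactor $2^{-1/3}(1+O(\nu^{-2/3}))$, and combine $4\sqrt{2}\cdot 2^{1/3} = 2^{17/6}$. The paper compresses all of this into ``after some computations''; your version simply makes the bookkeeping explicit.
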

\begin{proof}
  Let $t = 8 + 8 \sqrt[3]{2} \,s \nu^{-\frac{2}{3}}$ in \eqref{fn-main-rmk} and recall the asymptotic formula for $U(t)$ in \eqref{FnUt-asy}, we obtain after some computations
  \begin{equation*}
    w(x)^{\frac{1}{2}} \mathcal{F}_{n}(x) =  \frac{K_n}{4 \sqrt{2} \, \pi^{3/2}} \left( 2^{-1/3} + O(\nu^{-\frac{2}{3}}) \right) \nu^{\frac{1}{6}} \left[ \Ai(s)  + O(\nu^{-\frac{2}{3}}) \right].
  \end{equation*}
  This above formula proves our corollary.
\end{proof}
\begin{rmk} \label{fn-hat-rmk}
  We may rewrite the polynomial on the left hand side of \eqref{fn-airy} into an orthonormal one. Note that if $\mathcal{P}_n(x)$ satisfies a three-term recurrence relation in \eqref{monic-recur}, then $\widehat {\mathcal{P}}_n(x):=(\prod_{k=1}^n \beta_k)^{-\frac{1}{2}}\mathcal{P}_n(x)$ is the corresponding orthonormal polynomial and satisfies the following recurrence relation
  \begin{equation*}
    x \widehat {\mathcal{P}}_n(x) = \sqrt{\beta_{{n+1}}} \widehat {\mathcal{P}}_{n+1}(x) + \alpha_n \widehat {\mathcal{P}}_n(x) + \sqrt{\beta_n} \widehat {\mathcal{P}}_{n-1}(x).
  \end{equation*}
  Let $\widehat {\mathcal{F}}_{n}(x)$ be the orthonormal Chen--Ismail polynomials, then \eqref{fn-airy} can be rewritten as
  \begin{equation} \label{fn-hat-airy}
    w(x)^{\frac{1}{2}} \widehat{\mathcal{F}}_{n}(x) =  \frac{\nu^{-\frac{5}{6}}}{2^{7/3} }  \left[ \Ai(s)  + O(\nu^{-\frac{2}{3}}) \right], \qquad \textrm{as $\nu \to \infty.$}
  \end{equation}
\end{rmk}

From the above corollary, an asymptotic approximation of extreme zeros of $\mathcal{F}_{n}(x)$ can be easily obtained by using the zeros of the Airy function $\Ai(x)$. To get it, we need the following result of Hethcote \cite{Heth}.
\begin{lem} \label{heth-lem}
  In the interval $[a-\rho, a + \rho ]$, suppose that $f (t )= g (t )+\varepsilon(t )$, where $f (t )$
is continuous, $g(t )$ is differentiable, $g (a )=0$, $m = \min|g'(t )| > 0$ and
\begin{equation*}
  E(t) = \max |\varepsilon (t )| < \min{| g (a-\rho )|, |g (a+\rho )|} .
\end{equation*}
Then, there exists a zero $c$ of $f (t )$ in the interval such that $|c-a |\leq \frac{E}{m}$.
\end{lem}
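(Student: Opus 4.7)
The plan is to combine the intermediate value theorem with the mean value theorem in a very clean way. First, I would use the hypothesis $|g'(t)| \geq m > 0$ on $[a-\rho,a+\rho]$ to deduce that $g$ is strictly monotonic there; together with $g(a)=0$, this forces $g(a-\rho)$ and $g(a+\rho)$ to have opposite (nonzero) signs.

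Next, I would argue that the perturbation $\varepsilon$ cannot destroy this sign change at the endpoints. At each endpoint $|\varepsilon| \leq E$, and by hypothesis $E$ is strictly less than both $|g(a-\rho)|$ and $|g(a+\rho)|$, so $f = g+\varepsilon$ inherits the sign of $g$ at $a-\rho$ and $a+\rho$ separately. Since $f$ is continuous on $[a-\rho,a+\rho]$, the intermediate value theorem produces a zero $c \in (a-\rho, a+\rho)$ of $f$.

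Finally, to get the quantitative estimate, I would write $f(c)=0$ as $g(c) = -\varepsilon(c)$, so $|g(c)| \leq E$. Applying the mean value theorem to $g$ between $a$ and $c$, there is a $\xi$ in that subinterval with $g(c) - g(a) = g'(\xi)(c-a)$. Using $g(a)=0$ and $|g'(\xi)| \geq m$ gives $m|c-a| \leq |g(c)| \leq E$, hence $|c-a| \leq E/m$.

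The proof is essentially textbook, so I do not anticipate a substantive obstacle. The only delicate point is verifying that $g(a-\rho)$ and $g(a+\rho)$ have opposite signs and that the perturbation really preserves the sign at each endpoint; both follow immediately from the stated hypotheses, but a weaker assumption (for instance, allowing $g'$ to vanish on a small set) would require more care in establishing strict monotonicity.
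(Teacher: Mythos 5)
The paper does not prove this lemma at all: it is quoted from Hethcote's 1970 paper with only a citation, so there is no internal argument to compare yours against. Your proof is correct and complete as a standalone argument. The combination of (i) sign change of $g$ at the endpoints, (ii) persistence of those signs for $f=g+\varepsilon$ because $E<\min\{|g(a-\rho)|,|g(a+\rho)|\}$, (iii) the intermediate value theorem for the continuous function $f$, and (iv) the mean value theorem estimate $m|c-a|\le |g'(\xi)||c-a|=|g(c)|=|\varepsilon(c)|\le E$ is exactly the standard route to this bound. The one step worth spelling out is the very first: to get strict monotonicity of $g$ from $|g'|\ge m>0$ you should invoke Darboux's theorem (a derivative, even a discontinuous one, has the intermediate value property), so $g'$ cannot change sign without vanishing; this is what justifies that $g(a-\rho)$ and $g(a+\rho)$ have opposite signs. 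Note also that your argument remains valid even if $E/m>\rho$, since then the stated bound $|c-a|\le E/m$ is weaker than the trivial bound $|c-a|\le\rho$ already guaranteed by $c$ lying in the interval, so no case distinction is needed.
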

We have the following approximation.
\begin{cor}
  Let $x_{n,k}$ be the zeros of $\mathcal{F}_{n}(x)$ such that $x_{n,1}>x_{n,2}> \cdots>x_{n,n}$, and $\mathfrak{a}_k$ be the zeros of the Airy function $\Ai(x)$ in the descending order. Then we have for fixed $k$ and large $n$
  \begin{equation*}
    x_{n,k} = 8\nu^2 + 8 \sqrt[3]{2} \; \mathfrak{a}_k \nu^{4/3} + O(\nu^{2/3}),
  \end{equation*}
   where $\nu = n + \frac{1}{2}$.
\end{cor}
\begin{proof}
  A combination of Corollary \ref{fn-cor1} and Lemma \ref{heth-lem} immediately gives us the result.
\end{proof}


\section{Berg-Letessier-Valent polynomials}

Now we are going to study some birth and death process polynomials $\mathcal{Q}_n(x)$. Before we derive the Plancherel-Rotach asymptotics for Berg-Letessier-Valent polynomials $\mathcal{Q}_{n}(x)$ with the rates in \eqref{Berg&Val}, we can use the chain sequence method to get the bounds for the largest and smallest zeros as we did in Proposition \ref{fn-chain}.

\begin{prop}
  Let $x_{n,k}$ be zeros of $\mathcal{Q}_{n}(x)$ such that $x_{n,1}>x_{n,2}> \cdots>x_{n,n}$, then we have the following bounds for all $n\geq 1$
  \begin{equation*}
    x_{n,1} < 2^{10}n^4-2^{10}n^{3} + 35\cdot 2^6 n^2 \qquad \textrm{and} \qquad x_{n,n} > 4.29.
  \end{equation*}
\end{prop}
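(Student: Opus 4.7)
The plan is to apply Theorem \ref{thmIsmLi} with the simple chain sequence $c_n \equiv 1/4$, following exactly the template of Proposition \ref{fn-chain}. The preliminary step is to rewrite the birth-and-death recurrence \eqref{bir&dea} in the monic form \eqref{monic-recur}: with $\mathcal{P}_n(x) := (-1)^n \lambda_0\lambda_1\cdots\lambda_{n-1}\,\mathcal{Q}_n(x)$, a direct computation yields $\alpha_n = \lambda_n + \mu_n$ and $\beta_n = \lambda_{n-1}\mu_n > 0$. Substituting the quartic rates \eqref{Berg&Val} gives
$$\alpha_n = 512 n^4 + 512 n^3 + 352 n^2 + 112 n + 12,$$
with $\beta_n$ an explicit polynomial of degree eight in $n$. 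Formula \eqref{chain-sol} with $c_n = 1/4$ then produces
$$x_n,\; y_n = \tfrac{1}{2}(\alpha_n + \alpha_{n-1}) \pm \tfrac{1}{2}\sqrt{(\alpha_n - \alpha_{n-1})^2 + 16\lambda_{n-1}\mu_n}.$$

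For the upper bound, setting $f(n) := 2^{10} n^4 - 2^{10} n^3 + 35 \cdot 2^6 n^2$, I would establish the pointwise estimate $x_n < f(n)$ for every $n \geq 1$; since $f$ is strictly increasing on $\mathbb{N}$, this yields $B = \max_{0 < k < N} x_k < f(N-1) < f(N)$, and the degenerate case $N = 1$ (where $\mathcal{Q}_1(x) = 1 - x/12$ has its only zero at $x = 12 < 2240 = f(1)$) is handled directly. Both $f(n) - \alpha_n$ and $f(n) - \alpha_{n-1}$ are positive quartics in $n$ for $n \geq 1$, so I would square the equivalent inequality $\sqrt{(\alpha_n - \alpha_{n-1})^2 + 16\beta_n} < 2f(n) - \alpha_n - \alpha_{n-1}$, reducing the problem to the polynomial inequality
$$(f(n) - \alpha_n)(f(n) - \alpha_{n-1}) > 4\lambda_{n-1}\mu_n.$$
A direct expansion shows the coefficients of $n^8$, $n^7$ and $n^6$ on the two sides coincide, so the difference is a quintic in $n$ with positive leading term. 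Its positivity for $n \geq 1$ is then verified by evaluation at $n = 1$ and leading-term dominance for $n \geq 2$.

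For the lower bound, the same squaring device reduces $y_k > 4.29$ to an equivalent polynomial inequality of degree four in $k$, with leading coefficient $73728 - 4.29 \cdot 1024 = 69335.04 > 0$. Direct evaluation at $k = 1$ gives
$$y_1 = 756 - \tfrac{1}{2}\sqrt{2260224} = 4.298\ldots > 4.29,$$
and the polynomial inequality is likewise verified for $k \geq 2$ by leading-term dominance, so $y_k > 4.29$ for every $k \geq 1$; hence $A = \min_{0 < k < N} y_k > 4.29$ as required.

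The main technical obstacle is the polynomial inequality for $x_n$, which is asymptotically tight (three leading coefficients cancel exactly); but once the cancellations are carried out the residual quintic inequality is elementary. All other steps are direct computation.
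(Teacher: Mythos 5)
Your proposal is correct and follows essentially the same route as the paper: apply Theorem \ref{thmIsmLi} with $c_n\equiv 1/4$ to the monic form with $\alpha_n=\lambda_n+\mu_n$ and $\beta_n=\lambda_{n-1}\mu_n$, then verify the resulting polynomial inequalities (your key quantities $\alpha_n=512n^4+512n^3+352n^2+112n+12$, the exact cancellation of the three leading coefficients, and $y_1=756-\tfrac12\sqrt{2260224}=4.2979\ldots$ all check out). The paper's own proof is a one-line appeal to monotonicity of $x_n$ and $y_n$; your pointwise verification $x_k<f(k)$ and $y_k>4.29$ for every $k$ is more careful and in fact sidesteps the paper's misstated claim that $y_n$ is decreasing (it is increasing, e.g.\ $y_{10}\approx 70$, which is exactly why the minimum is attained at $y_1$ as your computation shows).
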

\begin{proof}
  Recall the recurrence relation of $\mathcal{Q}_{n}(x)$ in \eqref{bir&dea} and the recurrence coefficients in \eqref{Berg&Val}. We choose $c_n =1/4$ in Theorem \ref{thmIsmLi}. Note that the solutions $x_n$ and $y_n$ in \eqref{chain-sol} are monotonically increasing and decreasing with $n$, respectively. Then the result follows.
\end{proof}

\subsection{Difference equation method}

Like what we have done in the previous section, we introduce $\mathcal{Q}_n(x) = (-1)^n K_n p_n(x)$ with
\begin{equation} \label{BVkn-def}
  K_n:= \frac{\Gamma(\frac{n+\frac{3}{4}}{2}) \, \Gamma(\frac{n+1}{2})^2}{\Gamma(\frac{n+\frac{3}{2}}{2})^2 \, \Gamma(\frac{n+\frac{7}{4}}{2})}
\end{equation}
to arrive at the standard form. Note that
\begin{equation*}
  \frac{K_{n+1}}{K_{n-1}} = \frac{(4n-1)(4n)^2}{(4n+2)^2(4n+3)},
\end{equation*}
then the recurrence relation \eqref{bir&dea} with \eqref{Berg&Val} reduces to the following standard form
\begin{equation} \label{BV3term-new}
  p_{n+1}(x) - (A_n x + B_n) \, p_n(x) + p_{n-1}(x) = 0.
\end{equation}
As $n \to \infty$, the recurrence coefficients $A_n$ and $B_n$ satisfy the following expansions
\begin{eqnarray}
  A_n = \frac{K_n}{\lambda_n \cdot K_{n+1}} \sim  n^{-\theta} \sum_{s=0}^\infty \frac{\alpha_s}{n^s}, \qquad
  B_n = -\frac{(\lambda_n + \mu_n)K_n}{\lambda_n \cdot K_{n+1}} \sim \sum_{s=0}^\infty \frac{\beta_s}{n^s} \label{BVABn-asy}
\end{eqnarray}
with $\theta=4$,
\begin{eqnarray}
  \alpha_0 =  \frac{1}{256}, \qquad \quad \alpha_1 =  -\frac{1}{256} \label{BValpha-s}
\end{eqnarray}
and
\begin{eqnarray}
  \beta_0 =  -2, \qquad \quad \beta_1 = \beta_2 = 0. \label{BVbeta-s}
\end{eqnarray}
Let us introduce $x=(n+1/4)^4t$, then the characteristic equation for \eqref{BV3term-new} is
\begin{equation} \label{chara-eqn}
  \lambda^2 - (\alpha_0t + \beta_0)\lambda + 1 =0
\end{equation}
with $\alpha_0$ and $\beta_0$ given in \eqref{BValpha-s} and \eqref{BVbeta-s}, respectively. The roots of this equation coincide when
$\alpha_0 t_{\pm} + \beta_0 = \pm 2$, which gives us two transition points
\begin{equation*}
  t_+ = 2^{10}, \qquad t_- = 0.
\end{equation*}
Near the large transition point $t_+$, we get the Airy-type asymptotic expansion as in the previous section. But since the small transition point $t_-$ is located at the origin, we obtain the Bessel-type (not Airy-type) expansion in its neighbourhood. This is similar to the case of Laguerre polynomials, in which case Bessel asymptotics are obtained near the origin. Although we do not have the weight functions for Berg-Letessier-Valent polynomials, we guess they are supported on $\mathbb{R}^+$. For a detailed explanation why the case $t_-=0$ is so special to give us the Bessel type asymptotics, one may refer to discussions in \cite{Cao:Li}. We have the following two different types of expansions.

\begin{prop}
  When $n$ is large, $p_n(x)$ in \eqref{BV3term-new} can be expressed as
  \begin{equation*}
   p_n(x) = C_1(x) P_n(x) + C_2(x) Q_n(x),
  \end{equation*}
  where $C_1(x)$ and $C_2(x)$ are two $n$-independent functions, $P_n(x)$ and $Q_n(x)$ are two linearly independent solutions of \eqref{BV3term-new} satisfying the following Airy-type asymptotic expansions in the neighbourhood of $t_+=2^{10}$
  \begin{equation} \label{BV-Pn}
  P_n(\nu^4 t) \sim  2^{9/2}\left(\frac{ U(t)}{t(t -2^{10})}\right)^{\frac{1}{4}} \left[ \Ai(\nu^{\frac{2}{3}} U(t)) \sum_{s=0}^\infty \frac{\tilde{A}_s(U)}{\nu^{s-\frac{1}{6}}} + \Ai'(\nu^{\frac{2}{3}} U(t)) \sum_{s=0}^\infty \frac{\tilde{B}_s(U)}{\nu^{s+\frac{1}{6}}} \right]
\end{equation}
and
\begin{equation} \label{BV-Qn}
  Q_n(\nu^4 t) \sim 2^{9/2}\left(\frac{ U(t)}{t(t -2^{10})}\right)^{\frac{1}{4}}  \left[ \Bi(\nu^{\frac{2}{3}} U(t)) \sum_{s=0}^\infty \frac{\tilde{A}_s(U)}{\nu^{s-\frac{1}{6}}} + \Bi'(\nu^{\frac{2}{3}} U(t)) \sum_{s=0}^\infty \frac{\tilde{B}_s(U)}{\nu^{s+\frac{1}{6}}} \right].
\end{equation}
Here $\nu = n + \frac{1}{4}$, the leading coefficients are given by
\begin{equation*}
  \tilde{A}_0(U) = 1, \qquad \tilde{B}_0(U) = 0
\end{equation*}
and $U(t)$ is defined as
\begin{eqnarray}
  \frac{2}{3} [U(t)]^{\frac{3}{2}} &= & \frac{\sqrt{2} \, t^{\frac{1}{4}}}{8} \; \mathcal{B}_{1-\frac{2^{10}}{t}} (\frac{1}{2}, \frac{1}{4})  - \log \frac{t - 2^9 + \sqrt{t(t - 2^{10})}}{2^9 }, \qquad t \geq 2^{10} \label{BVzeta-def1} \\
  \frac{2}{3} [-U(t)]^{\frac{3}{2}} &=& \cos^{-1} \left( \frac{t-2^9}{2^9} \right) - \frac{\sqrt{2} \, t^{\frac{1}{4}}}{8} \; \mathcal{B}_{1-\frac{t}{2^{10}}} (\frac{1}{2}, \frac{1}{4}) \qquad \qquad 0<t < 2^{10}; \label{BVzeta-def2}
\end{eqnarray}
see \eqref{icbeta} for the definition of $\mathcal{B}_x(a,b)$.
\end{prop}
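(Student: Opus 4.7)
The plan is to follow the template of Proposition~\ref{prop-uniform-Chen-Ismail} and apply Wang and Wong's main difference-equation theorem from \cite{wang-wong2003} to the standard form \eqref{BV3term-new}. The new feature compared with Section~2 is that $B_n$ is nonzero, so the characteristic equation carries a linear term and the two transition points $t_+ = 2^{10}$ and $t_- = 0$ are no longer symmetric; the present proposition concerns only $t_+$, where Airy-type behavior is expected.

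First I would verify the reduction to standard form and the expansions \eqref{BVABn-asy}. Using the Gamma-function recursion, the identity $K_{n+1}/K_{n-1} = (4n-1)(4n)^2/[(4n+2)^2(4n+3)]$ follows directly, so the substitution $\mathcal{Q}_n = (-1)^n K_n p_n$ with $K_n$ from \eqref{BVkn-def} brings \eqref{bir&dea}--\eqref{Berg&Val} into \eqref{BV3term-new} with $A_n = K_n/(\lambda_n K_{n+1})$ and $B_n = -(\lambda_n+\mu_n)K_n/(\lambda_n K_{n+1})$; expansion via Stirling then yields \eqref{BVABn-asy} together with the leading coefficients \eqref{BValpha-s}--\eqref{BVbeta-s}. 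The characteristic equation \eqref{chara-eqn} has a double root exactly when $\alpha_0 t + \beta_0 = \pm 2$, giving $t_\pm$ as stated. Since $t_+\neq 0$, the large transition point falls into the regular Airy case of \cite{wang-wong2003}. Following their prescription, I would set $\nu = n+\tau_0$ with $\tau_0$ chosen so that the next-order correction to the characteristic exponent cancels at $t_+$; a direct calculation using \eqref{BValpha-s}--\eqref{BVbeta-s} gives $\tau_0 = 1/4$. Wang and Wong's theorem then produces two linearly independent Airy-type solutions $P_n, Q_n$ of \eqref{BV3term-new} satisfying \eqref{BV-Pn}--\eqref{BV-Qn}, and since any solution of a second-order linear recurrence is an $n$-independent combination of two linearly independent ones, we obtain $p_n = C_1(x)P_n + C_2(x)Q_n$ for some $n$-independent coefficients $C_1(x), C_2(x)$.

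I expect the main obstacle to be identifying the explicit form \eqref{BVzeta-def1}--\eqref{BVzeta-def2} of $U(t)$ from the implicit integral prescription in \cite{wang-wong2003}. With $\alpha_0 = 2^{-8}$ and $\beta_0 = -2$, the larger characteristic root simplifies to $\lambda_+(t) = [t - 2^9 + \sqrt{t(t-2^{10})}]/2^9$, whose logarithm supplies the log-term in \eqref{BVzeta-def1}; the remaining Wang--Wong integral should reduce, after the substitution $y = 1 - 2^{10}/s$, to $\sqrt{2}\,t^{1/4}\mathcal{B}_{1-2^{10}/t}(1/2,1/4)/8$, matching the Beta-function contribution. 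The oscillatory counterpart \eqref{BVzeta-def2} on $0 < t < 2^{10}$ follows analogously, with $\cos^{-1}((t-2^9)/2^9)$ arising as the imaginary part of $\log\lambda_+$ via the standard WKB connection formula; care must be taken with branches of the square root and the logarithm so that $U(t)$ is real-analytic across $t_+$, monotonically increasing there, and normalized by $U(t_+) = 0$.
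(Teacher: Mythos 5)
Your proposal is correct and follows essentially the same route as the paper: the paper's proof simply sets $\tau_0 = -(\alpha_1 t_+ + \beta_1)/[(2-\beta_0)\theta] = 1/4$, puts $\nu = n + \tau_0$, and invokes the main theorem of Wang and Wong \cite{wang-wong2003}. Your additional verifications (the reduction to standard form, the values of $\alpha_s,\beta_s$, and the identification of $U(t)$ with the stated incomplete-Beta and logarithmic expressions) are exactly the computations the paper leaves implicit, so nothing is missing or different in substance.
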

\begin{proof}
  With
  \begin{equation*}
  \tau_0 = -\frac{\alpha_1t_+ + \beta_1}{(2-\beta_0) \theta} = \frac{1}{4} \qquad \textrm{and} \qquad \nu = n + \tau_0,
  \end{equation*}
 our results follow from the main Theorem in \cite{wang-wong2003}.
\end{proof}

\begin{prop}
  When $n$ is large, $p_n(x)$ in \eqref{BV3term-new} can be expressed as
  \begin{equation*}
  p_n(x) = C_1^*(x)(-1)^n P_n^*(x) + C_2^*(x) (-1)^n Q_n^*(x)
  \end{equation*}
  where $C_1^*(x)$ and $C_2^*(x)$ are two $n$-independent functions, $P_n^*(x)$ and $Q_n^*(x)$ are two linearly independent solutions of \eqref{BV3term-new} satisfying the following Bessel-type asymptotic expansions in the neighbourhood of $t_-=0$
  \begin{equation} \label{BV-Pn2}
  P_n^*(\nu^4 t) \sim  2^{9/2}\nu^{\frac{1}{2}}\left(\frac{ U^*(t)}{t(2^{10}-t)}\right)^{\frac{1}{4}} \left[ J_{\frac{1}{2}}(\nu {U^*}^{\frac{1}{2}}(t)) \sum_{s=0}^\infty \frac{\tilde{A}_s^*(U^*)}{\nu^{s}} + J_{\frac{3}{2}}(\nu {U^*}^{\frac{1}{2}}(t)) \sum_{s=0}^\infty \frac{\tilde{B}_s^*(U^*)}{\nu^{s}} \right]
\end{equation}
and
\begin{equation} \label{BV-Qn2}
  Q_n^*(\nu^4 t) \sim 2^{9/2}\nu^{\frac{1}{2}}\left(\frac{ U^*(t)}{t(2^{10}-t)}\right)^{\frac{1}{4}} \left[ W_{\frac{1}{2}}(\nu {U^*}^{\frac{1}{2}}(t)) \sum_{s=0}^\infty \frac{\tilde{A}_s^*(U^*)}{\nu^{s}} + W_{\frac{3}{2}}(\nu {U^*}^{\frac{1}{2}}(t)) \sum_{s=0}^\infty \frac{\tilde{B}_s^*(U^*)}{\nu^{s}} \right].
\end{equation}
Here $\nu = n + \frac{1}{4}$,
\begin{equation} \label{w-alpha-def}
  W_\alpha(x) := Y_\alpha(x) - i J_\alpha (x),
\end{equation}
the leading coefficients are given by
\begin{equation*}
  \tilde{A}_0^*(U^*) = 1, \qquad \tilde{B}_0^*(U^*) = 0
\end{equation*}
and $U^*(t)$ is defined as
\begin{eqnarray}
  [-U^*(t)]^{\frac{1}{2}} &= &  \int_{t}^0 \frac{(t/s)^{\frac{1}{4}}}{\sqrt{s(s - 2^{10})}} ds-\log \frac{2^9 - t+ \sqrt{t(t - 2^{10})}}{2^9 }   , \quad t \leq 0 \label{BVzeta2-def1} \\
   \ [U^*(t)]^{\frac{1}{2}}&=& \frac{\sqrt{2} \, t^{\frac{1}{4}}}{8} \; \mathcal{B}_{\frac{t}{2^{10}}} (\frac{1}{4}, \frac{1}{2}) -\cos^{-1} \left( \frac{2^9-t}{2^9} \right)  \qquad \qquad 0<t < 2^{10}; \label{BVzeta2-def2}
\end{eqnarray}
see \eqref{icbeta} for the definition of $\mathcal{B}_x(a,b)$.
\end{prop}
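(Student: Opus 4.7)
The plan is to invoke the Bessel-type uniform asymptotic expansion of Cao and Li \cite{Cao:Li} applied to the standardized recurrence \eqref{BV3term-new}, in direct analogy with the application of Wang-Wong \cite{wang-wong2003} in the proof of the preceding proposition. Cao-Li's theorem is tailored to the case when the characteristic equation of the recurrence has a coincident turning point located at the origin, which is precisely the situation at $t_- = 0$.

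First, one checks the hypotheses of \cite{Cao:Li} against \eqref{BVABn-asy}--\eqref{BVbeta-s}: the requirements are met with $\theta = 4$, $\alpha_0 = 2^{-8}$, and $\beta_0 = -2$. The characteristic equation \eqref{chara-eqn} evaluated at $t_- = 0$ reduces to $\lambda^2 + 2\lambda + 1 = (\lambda+1)^2 = 0$, giving a double root $\lambda = -1$. This double negative root is the source of the oscillatory prefactor $(-1)^n$ in front of $P_n^*(x)$ and $Q_n^*(x)$: the substitution $p_n(x) = (-1)^n q_n(x)$ converts \eqref{BV3term-new} into a companion recurrence for $q_n$ whose characteristic root at $t_-$ is $+1$, which is the canonical form treated in \cite{Cao:Li}. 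The scaling shift $\tau_0 = 1/4$ is the unique choice that cancels the first subleading correction at $t_-$, exactly as in the Airy case.

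The core of the proof is then the identification of the explicit $U^*(t)$. Following \cite{Cao:Li}, $U^*(t)$ is obtained from a Liouville-Green quadrature of the discriminant of the characteristic equation near the turning point, schematically
\begin{equation*}
[\pm U^*(t)]^{1/2} \;=\; \int_{0}^{t} \frac{\sqrt{(\alpha_0 s + \beta_0)^2 - 4}}{2 s}\, ds \;+\; (\text{matching term}),
\end{equation*}
with sign and branch conventions fixed by requiring $U^*(t) \to 0$ as $t \to 0$ and compatibility with the local Bessel model. Substituting $\alpha_0 = 2^{-8}$, $\beta_0 = -2$ and rescaling $s = 2^{10} u$ reduces the integrand to an elementary combination that integrates to the incomplete Beta function $\mathcal{B}_x(1/4, 1/2)$ on the oscillatory side $0 < t < 2^{10}$ and to the same Beta function plus a logarithmic matching contribution on the exponential side $t < 0$, producing the closed forms \eqref{BVzeta2-def1} and \eqref{BVzeta2-def2}. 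The prefactor $2^{9/2}\nu^{1/2}[U^*(t)/(t(2^{10}-t))]^{1/4}$ and the normalization $\tilde A_0^*(U^*) = 1$, $\tilde B_0^*(U^*) = 0$ are the standard Cao-Li choices associated with the distinguished solution pair $(J_{1/2}, W_{1/2})$, where $W_\alpha$ is defined in \eqref{w-alpha-def}.

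The main obstacle is the analytic bookkeeping at the turning point: one has to verify that $U^*(t)$ extends smoothly across $t = 0$ with the correct sign on each side, match the logarithmic branch on $t < 0$ with the Beta-function branch on $0 < t < 2^{10}$, and confirm that the extra $\nu^{1/2}$ in front of the expansions corresponds to the natural Bessel scaling in the Cao-Li setup (this factor is absent in the Airy case because of a different Wronskian normalization). Once these routine analytic checks are completed, the main theorem of \cite{Cao:Li} delivers the two expansions \eqref{BV-Pn2} and \eqref{BV-Qn2} immediately.
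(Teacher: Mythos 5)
Your proposal follows the same route as the paper, whose entire proof is the one-line observation that the expansions \eqref{BVABn-asy}--\eqref{BVbeta-s} place \eqref{BV3term-new} in the setting of the main theorem of \cite{Cao:Li}; your added explanation of the $(-1)^n$ prefactor via the double root $\lambda=-1$ of $(\lambda+1)^2=0$ at $t_-=0$ is correct and consistent with the paper's normalization. The only caveat is that your ``schematic'' quadrature for $U^*$ is not quite the right shape --- the phase on $0<t<2^{10}$ is $\int_0^t (t/s)^{1/4}\,ds/\sqrt{s(2^{10}-s)}-\cos^{-1}\bigl((2^9-t)/2^9\bigr)$, with the weight $(t/s)^{1/4}$ coming from the scaling $x=\nu^4 t$ rather than a bare discriminant integral --- but since you defer the actual identification to \cite{Cao:Li}, this does not affect the validity of the argument.
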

\begin{proof}
  With the asymptotic expansions for $A_n$ and $B_n$ in \eqref{BVABn-asy}, our results follow from the main Theorem in \cite{Cao:Li}.
\end{proof}
\begin{rmk}
  Note that the integral in \eqref{BVzeta2-def1} can be written as a hypergeometric function as follows:
  \begin{equation*}
    \int_{t}^0 \frac{(t/s)^{\frac{1}{4}}}{\sqrt{s(s - 2^{10})}} ds=\frac{\sqrt{2} \, (-t)^{\frac{1}{2}}}{2} \; {}_2F_1(\frac{1}{4},\frac{1}{2},\frac{5}{4},t).
  \end{equation*}
  The two functions $U(t)$ and $U^*(t)$ are both monotonically increasing functions in the neighborhood of $2^{10}$ and 0, respectively. In fact, from their definitions in the above two propositions, we have the following asymptotic formulas
  \begin{eqnarray} \label{BVUt-asy}
    U(t)
    = \frac{t-2^{10}}{2^{10} \sqrt[3]{4}} + O(t-2^{10})^2, \qquad \textrm{as } t \to 2^{10}.
  \end{eqnarray}
  and
  \begin{equation*}
    U^*(t) = \frac{t}{2^8} + O(t^2) \qquad \textrm{as } t \to 0.
  \end{equation*}
\end{rmk}


\subsection{Determination of $C_1(x)$ and $C_2(x)$}

For convenience, we state a special case of Lemma \ref{lem-exponential-general} which will be used in determining $C_1(x)$ and $C_2(x)$ for all three types of birth and death polynomials considered in this paper.
\begin{lem} \label{lemma-birth}
Assume $\pi_n(x)$ satisfies the following recurrence relation
\begin{align*}
  \pi_{n+1}(x)=[x-(\la_n+\mu_n)]\pi_n(x)-\la_{n-1}\mu_n\pi_{n-1}(x)
\end{align*}
with $\pi_0(x)=1$ and $\pi_1(x)=x-(\la_0+\mu_0)$. Here, $\la_n$ and $\mu_n$ are assumed to be polynomials in $n$ and satisfy the following asymptotic formulas as $n\to\infty$:
\begin{align*}
  \la_n&= b(n^p+un^{p-1})+O(n^{p-2})\\
  \mu_n&= b(n^p+vn^{p-1})+O(n^{p-2}),
\end{align*}
where $b>0$ and $p>0$. Rescale the variable $x$ by $x=x_n:=(n+\s)^py$ with $y\in\mathbb{C}\cut[0,4b]$, then we obtain the following asymptotic formula
\begin{align}\label{pi-outer-general}
  \pi_n(x_n)&\sim ({yn^p\over e^p})^{n}(1-4b/y)^{-1/4}\left[{1+\sqrt{1-4b/y}\over 2}\right]^{2n+{u+v\over p}}
  \times\exp\{\int_0^1{(n+\s)p\over\sqrt{1-4bs^p/y}}ds\}.
\end{align}
\end{lem}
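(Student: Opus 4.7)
My plan is to derive \eqref{pi-outer-general} as a direct specialization of Lemma \ref{lem-exponential-general}. Starting from the birth-and-death recurrence I set $a_n = \lambda_n + \mu_n$ and $b_n = \lambda_{n-1}\mu_n$. Expanding the given asymptotics for $\lambda_n$ and $\mu_n$, and using $(n-1)^p = n^p - pn^{p-1} + O(n^{p-2})$ inside $\lambda_{n-1}$, I obtain $a_n = 2bn^p + b(u+v)n^{p-1} + O(n^{p-2})$ and $b_n = b^2 n^{2p} + b^2(u+v-p)n^{2p-1} + O(n^{2p-2})$. Thus, in the notation of Lemma \ref{lem-exponential-general}, the coefficients are $a = 2b$, $\alpha = b(u+v)$, $\beta = b^2(u+v-p)$, and the convex hull of $\{0, a-2b, a+2b\}$ collapses to $I = [0,4b]$, matching the hypothesis $y \in \mathbb{C}\setminus[0,4b]$.

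The decisive observation is that $a^2 = 4b^2$, which forces the discriminant to factor cleanly:
\begin{equation*}
(y-2br^p)^2 - 4b^2 r^{2p} = y^2(1 - 4br^p/y).
\end{equation*}
Every square root in \eqref{key-lemma} therefore reduces to $y\sqrt{1-4br^p/y}$, and the algebraic identity $(1-w) + \sqrt{1-2w} = \tfrac{1}{2}(1+\sqrt{1-2w})^2$ applied with $w = 2br^p/y$ gives
\begin{equation*}
(y-2br^p) + y\sqrt{1-4br^p/y} = \frac{y}{2}\bigl(1+\sqrt{1-4br^p/y}\bigr)^2.
\end{equation*}
This identity is the source of the factor $\phi := (1+\sqrt{1-4b/y})/2$ that appears throughout \eqref{pi-outer-general}.

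With these simplifications I will evaluate the five factors of \eqref{key-lemma} one by one. The algebraic prefactor simplifies to $\phi$. For the leading exponential I write $\log[(y-ar^p)+\sqrt{\,\cdot\,}\,] = \log(y/2) + 2\log(1+\sqrt{1-4br^p/y})$ and integrate the second piece by parts with $dv = dr$:
\begin{equation*}
\int_0^1 \log(1+\sqrt{1-4br^p/y})\,dr = \log(1+\sqrt{1-4b/y}) - \frac{p}{2} + \frac{p}{2}\int_0^1 \frac{dr}{\sqrt{1-4br^p/y}}.
\end{equation*}
The remaining three exponentials --- the $a$-, $\alpha$- and $\beta$-integrals --- reduce to elementary antiderivatives via the substitution $w = \sqrt{1-4br/y}$, producing closed-form contributions in $(1-\sqrt{1-4b/y})$, $\log(1-4b/y)$ and $\log\phi$.

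The delicate point, and the main obstacle, is to verify that the three $(1-\sqrt{1-4b/y})$ contributions cancel exactly: $+\tfrac{1}{2}$ from the $a$-integral, $-\tfrac{u+v}{2p}$ from the $\alpha$-integral, and $+\tfrac{u+v-p}{2p}$ from the $\beta$-integral sum to zero. This cancellation is precisely what the $-p$ inside $\beta = b^2(u+v-p)$ is for, and it is what produces the clean form \eqref{pi-outer-general}. The rest is bookkeeping: the $\log\phi$ coefficients sum to $2n + (u+v)/p$ (with $2n$ from the integration by parts, $1$ from the prefactor, and $(u+v-p)/p$ from the $\beta$-integral), the $\log(1-4b/y)$ term gives the factor $(1-4b/y)^{-1/4}$, combining $(n^p/2)^n$ with $n[\log(y/2) - p]$ produces $(yn^p/e^p)^n$, and the $(n+\sigma)p \int_0^1 ds/\sqrt{1-4bs^p/y}$ factor assembles from the $np\int$ produced by the integration by parts together with the $p\sigma\int$ coming from the $\sigma$-term in the fourth exponential.
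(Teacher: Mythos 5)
Your proposal is correct and follows essentially the same route as the paper's proof: the paper likewise specializes Lemma \ref{lem-exponential-general} with $a=2b$, $\alpha=b(u+v)$, $\beta=b^2(u+v-p)$, uses $a^2=4b^2$ to collapse the discriminant (via $(y-as^p)+\sqrt{y^2-2ays^p}=(\sqrt{y}+\sqrt{y-2as^p})^2/2$), integrates by parts, and closes with exactly the cancellation $\tfrac12-\tfrac{u+v}{2p}+\tfrac{u+v-p}{2p}=0$. The only slip is in your last bookkeeping sentence: producing $(yn^p/e^p)^n$ requires the extra $2n\log 2$ released when $2n\log\bigl(1+\sqrt{1-4b/y}\bigr)$ is rewritten as $2n\log 2+2n\log\phi$, i.e.\ you combine $(n^p/2)^n$ with $n[\log(2y)-p]$ rather than $n[\log(y/2)-p]$.
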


\begin{proof}
  We will show that \eqref{pi-outer-general} is a special case of \eqref{key-lemma} with $a=2b$, $\a=b(u+v)$ and $\b=b^2(u+v-p)$.
Due to the fact that $a=2b$, we have following formulas for the integrals in \eqref{key-lemma}
$$\int_0^1{a\over2\sqrt{(y-ar)^2-4b^2r^2}}dr=\int_0^1{a\over2\sqrt{y^2-2ayr}}dr={1-\sqrt{1-2a/y}\over2},$$
and
$$\int_0^1{4b^2r+a(y-ar)\over2[(y-ar)^2-4b^2r^2]}dr=\int_0^1{ay\over2[y^2-2ayr]}dr={1\over4}\log{y\over y-2a},$$
and
$$\int_0^1{\a\over p\sqrt{(y-ar)^2-4b^2r^2}}dr={\a(1-\sqrt{1-2a/y})\over pa},$$
and
\begin{align*}
  \int_0^1{2\b r\over p\sqrt{(y-ar)^2-4b^2r^2}[(y-ar)+\sqrt{(y-ar)^2-4b^2r^2}]}dr
  \\={4\b\over pa^2}[\log{2\over1+\sqrt{1-2a/y}}-{1-\sqrt{1-2a/y}\over2}].
\end{align*}
Therefore, we have from \eqref{key-lemma}
\begin{align*}
  \pi_n(x_n)&\sim ({n^p\over2})^n(1-2a/y)^{-1/4}\left[{1+\sqrt{1-2a/y}\over 2}\right]^{1+{4\b\over pa^2}}
  \nonumber\\&\times\exp\{n\int_0^1\log[(y-as^p)+\sqrt{y^2-2ays^p}]ds+\int_0^1{p\s y\over\sqrt{y^2-2ays^p}}ds\}
  \nonumber\\&\times\exp[({1\over2}-{\a\over pa}+{2\b\over pa^2})(1-\sqrt{1-2a/y})].
\end{align*}
Furthermore, since
$$(y-as^p)+\sqrt{y^2-2ays^p}=(\sqrt y+\sqrt{y-2as^p})^2/2,$$
we have
$$\int_0^1\log[(y-as^p)+\sqrt{y^2-2ays^p}]ds=-\log2+2\int_0^1\log[\sqrt y+\sqrt{y-2as^p}]ds.$$
According to integration by parts, the right-hand side becomes
\begin{align*}
  &-\log2+2\log[\sqrt y+\sqrt{y-2a}]+p\int_0^1{\sqrt y-\sqrt{y-2as^p}\over\sqrt{y-2as^p}}ds
  \\&=\log{(\sqrt y+\sqrt{y-2a})^2\over2}-p+\int_0^1{p\sqrt y\over\sqrt{y-2as^p}}ds.
\end{align*}
Therefore, we obtain
\begin{align*}
  \pi_n(x_n)&\sim ({yn^p\over e^p})^{n}(1-2a/y)^{-1/4}\left[{1+\sqrt{1-2a/y}\over 2}\right]^{2n+1+{4\b\over pa^2}}
  \nonumber\\&\times\exp\{\int_0^1{(n+\s)p\over\sqrt{1-2as^p/y}}ds+({1\over2}-{\a\over pa}+{2\b\over pa^2})(1-\sqrt{1-2a/y})\}.
\end{align*}
Note that
$${1\over2}-{\a\over pa}+{2\b\over pa^2}={1\over2}-{u+v\over 2p}+{u+v-p\over 2p}=0.$$
The asymptotic formula \eqref{pi-outer-general} for $\pi_n(x_n)$ follows.
\end{proof}

Since the birth and death process polynomials $\mathcal{Q}_n(x)$ defined in \eqref{bir&dea} are related to the monic one as follows
\begin{equation} \label{qn&pin}
  \mathcal{Q}_n(x) = \left[(-1)^n\prod_{k=0}^{n-1}\la_k\right]^{-1} \pi_n(x),
\end{equation}
we have the following results for Berg-Letessier-Valent polynomials from the above lemma.
\begin{prop}
  Let $x=x_n:=(n+1/4)^4t$ with $t\in\mathbb{C}\cut[0,2^{10}]$, we have as $n\to\infty$,
 \begin{align*}
    p_n(x_n)&\sim2^{-8n-5/2}(n+\frac{1}{4})t^n(1-2^{10}/t)^{-1/4}\left[{1+\sqrt{1-2^{10}/t}\over 2}\right]^{2n+1/2}
  \nonumber\\&\times\exp\{{4(n+1/4)\over(2^{10}/t)^{1/4}}F(\arcsin (2^{10}/t)^{1/4},-1)\}.
  \end{align*}
  Here, the function $F$ is the elliptic integral of the first kind defined in \eqref{ellipticF}.
\end{prop}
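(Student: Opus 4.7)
The plan is to derive the asymptotic formula by applying Lemma \ref{lemma-birth} to the monic polynomial $\pi_n(x_n)$ and then dividing by the appropriate leading coefficient. Combining the normalization $\mathcal{Q}_n(x)=(-1)^n K_n p_n(x)$ with \eqref{qn&pin} gives the identity $p_n(x)=\pi_n(x)/[K_n\prod_{k=0}^{n-1}\la_k]$, so the task splits naturally into (i) asymptotic analysis of $\pi_n(x_n)$ and (ii) asymptotic analysis of $K_n\prod_{k=0}^{n-1}\la_k$.

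For step (i), matching the Berg--Valent rates to the hypotheses of Lemma \ref{lemma-birth} yields $b=256$, $p=4$, $u=2$, $v=0$, $\s=1/4$ (from the expansions $\la_n=256n^4+512n^3+O(n^2)$ and $\mu_n=256n^4+O(n^2)$), and formula \eqref{pi-outer-general} with $y$ replaced by $t$ gives
\begin{align*}
\pi_n(x_n) &\sim \left(\frac{tn^4}{e^4}\right)^{n}(1-2^{10}/t)^{-1/4}\left[\frac{1+\sqrt{1-2^{10}/t}}{2}\right]^{2n+1/2}\\
&\quad\times\exp\left\{\int_0^1\frac{4(n+1/4)}{\sqrt{1-2^{10}s^4/t}}\,ds\right\}.
\end{align*}
The substitution $u=(2^{10}/t)^{1/4}s$ transforms the integral in the exponent into $(2^{10}/t)^{-1/4}\int_0^{(2^{10}/t)^{1/4}}du/\sqrt{1-u^4}$, which equals $(2^{10}/t)^{-1/4}F(\arcsin(2^{10}/t)^{1/4}|-1)$ by the identity $\int_0^w du/\sqrt{1-u^4}=F(\arcsin w|-1)$ used in the Chen--Ismail section. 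This reproduces precisely the exponential factor appearing in the stated formula.

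For step (ii), the telescoping identity $\prod_{k=0}^{n-1}\la_k=4^{4n}(1/4)_n(1/2)_n^2(3/4)_n$ combined with Stirling's formula $\Gamma(n+a)\sim\sqrt{2\pi}\,n^{n+a-1/2}e^{-n}$ and the reflection identities $\Gamma(1/4)\Gamma(3/4)=\pi\sqrt{2}$, $\Gamma(1/2)^2=\pi$ yields $\prod_{k=0}^{n-1}\la_k\sim 2\sqrt{2}\cdot 2^{8n}n^{4n}e^{-4n}$. A parallel Stirling expansion of the ratio of gamma functions defining $K_n$ in \eqref{BVkn-def} produces $K_n\sim 2/n\sim 2/(n+1/4)$. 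Multiplying gives $K_n\prod_{k=0}^{n-1}\la_k\sim 2^{8n+5/2}n^{4n}e^{-4n}/(n+1/4)$, and dividing the expression for $\pi_n(x_n)$ by this quantity cancels the $n^{4n}e^{-4n}$ factor and produces the announced prefactor $2^{-8n-5/2}(n+1/4)t^n$.

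The main obstacle is the careful Stirling bookkeeping in step (ii): the four $\sqrt{2\pi}$ factors from the Pochhammer symbols and the five from the gamma quotient defining $K_n$ must cancel exactly, and the gamma constants $\Gamma(1/4),\Gamma(1/2),\Gamma(3/4)$ arising from the Pochhammers must combine via the reflection formula to produce the clean coefficient $2^{5/2}$. Once this accounting is verified, the factors $(1-2^{10}/t)^{-1/4}$ and $[(1+\sqrt{1-2^{10}/t})/2]^{2n+1/2}$ pass through unchanged and the proposition follows.
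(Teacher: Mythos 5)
Your proposal is correct and follows essentially the same route as the paper: apply Lemma \ref{lemma-birth} with $\sigma=1/4$, $p=4$, $b=2^8$, $u=2$, $v=0$, evaluate $\prod_{k=0}^{n-1}\lambda_k$ by Stirling's formula together with $K_n\sim 2/(n+1/4)$, and divide (the paper leaves the reduction of the exponent integral to the elliptic integral $F$ implicit, which you carry out explicitly). The only quibble is in your closing bookkeeping remark: $K_n$ involves six gamma factors whose $\sqrt{2\pi}$'s cancel pairwise, while the four $\sqrt{2\pi}$'s from the Pochhammer symbols do not cancel but combine with $\Gamma(1/4)\Gamma(1/2)^2\Gamma(3/4)=\pi^2\sqrt{2}$ to produce the factor $2\sqrt{2}$; your final constants are nevertheless correct.
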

\begin{proof}
  Recall the definition of $K_n$ in \eqref{BVkn-def}, we have
  $$K_n= \frac{\Gamma(\frac{n+\frac{3}{4}}{2}) \, \Gamma(\frac{n+1}{2})^2}{\Gamma(\frac{n+\frac{3}{2}}{2})^2 \, \Gamma(\frac{n+\frac{7}{4}}{2})}
  \sim \frac{2}{n+\frac{1}{4}},$$
  which yields $\mathcal{Q}_n(x) = (-1)^n K_n p_n(x) \sim(-1)^n\frac{2}{n+\frac{1}{4}}p_n(x). $
  Moreover, by applying Stirling's formula, we obtain
  \begin{align*}
    \prod_{k=0}^{n-1}\la_k&={2^{8n}\G(1/4+n)\G(1/2+n)^2\G(3/4+n)\over\G(1/4)\G(1/2)^2\G(3/4)}
    \\&\sim{2^{8n}\G(n)^4n^2\over\G(1/4)\G(1/2)^2\G(3/4)}
    \\&\sim{2^{8n}(2\pi)^2(n/e)^{4n}\over\G(1/4)\G(1/2)^2\G(3/4)}
    \\&\sim2^{8n}2\sqrt2(n/e)^{4n}.
  \end{align*}
  It is readily seen that
  $$\pi_n(x)=(-1)^n\prod_{k=0}^{n-1}\la_k\mathcal{Q}_n(x)\sim2^{8n+3/2}(n/e)^{4n}\frac{2}{n+\frac{1}{4}}p_n(x).$$
  Since $\pi_n(x)$ satisfies the asymptotic formula in (\ref{pi-outer-general}) with $\s=1/4$, $p=4$, $b=2^8$, $u=2$ and $v=0$, our result follows from the above formula.
\end{proof}
Again, using an argument similar to that in the proof of \cite[Theorem 3.2]{xswang2012}, we obtain the following result in the oscillatory region.
\begin{prop}
  Let $x=x_n:=(n+1/4)^4t$ with $t=2^{10}\cos^2\theta, \theta \in [\delta,\frac{\pi}{2}-\delta]$, we have as $n\to\infty$,
 \begin{align} \label{BV-pn-asy}
  p_n(x_n)&\sim \sqrt{2} (n+\frac{1}{4}) \frac{1}{(2^{10} - t)^{\frac{1}{4}}} \exp\left[ (n+\frac{1}{4}) t^{\frac{1}{4}} 2^{-5/2} \rho \right]
  \nonumber\\&
  \times\cos\{{\pi\over4}-(2n+1/2)\theta+ (n+\frac{1}{4}) \int_{t}^{2^{10}}
  \frac{(t/s)^{1/4}}{\sqrt{2^{10} s -s^2}} ds \}
\end{align}
where
\begin{equation*}
  \rho = \int_1^\infty \frac{du}{u^{\frac{1}{4}}\sqrt{u^2-u}} = \frac{\sqrt{\pi} \, \Gamma(\frac{1}{4})}{\Gamma(\frac{3}{4})}.
\end{equation*}
\end{prop}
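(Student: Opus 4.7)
My plan is to derive the inside-spectrum asymptotic by analytically continuing the exterior formula of the previous proposition across its branch cut $[0,2^{10}]$, in the spirit of the proof of \cite[Theorem~3.2]{xswang2012}. Parametrize the cut by $t=2^{10}\cos^2\th$ with $\th\in(\d,\tfrac{\pi}{2}-\d)$ and consider the two one-sided limits $t=2^{10}\cos^2\th\pm i0^+$. Since $p_n$ is real on the real axis and every factor of the exterior formula obeys Schwarz reflection, these two limits are complex conjugates; summing them converts the complex exponential into a real cosine and recovers the oscillatory asymptotic. The use of the sum (rather than only one side) reflects the fact that the exterior formula captures only the dominant of the two fundamental difference-equation solutions, while the subdominant one, invisible outside the spectrum, is supplied exactly by the complex-conjugate continuation.

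The first computational step is to take the $+i0^+$ limit of every multi-valued factor. Branch tracking gives $\sqrt{1-2^{10}/t}\to i\tan\th$, hence $(1-2^{10}/t)^{-1/4}\to e^{-i\pi/4}(\tan\th)^{-1/2}$, and using $1+i\tan\th=e^{i\th}/\cos\th$,
\[
\Bigl[\frac{1+\sqrt{1-2^{10}/t}}{2}\Bigr]^{2n+1/2}\ \longrightarrow\ \frac{e^{i(2n+1/2)\th}}{(2\cos\th)^{2n+1/2}}.
\]
The delicate factor is the elliptic integral $F(\arcsin(2^{10}/t)^{1/4}|-1)$, whose argument $(2^{10}/t)^{1/4}$ tends to $(\cos\th)^{-1/2}>1$. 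Writing $F(\arcsin z|-1)=\int_0^z dw/\sqrt{1-w^4}$ and noting that, as $t$ transits from outside the spectrum into the spectrum through the upper half $t$-plane, the point $z=(2^{10}/t)^{1/4}$ crosses the branch point $w=1$ through the lower half $w$-plane, the continuation produces
\[
F(\arcsin(2^{10}/t)^{1/4}|-1)\ \longrightarrow\ \frac{\r}{2}-i\int_1^{(\cos\th)^{-1/2}}\!\frac{dw}{\sqrt{w^4-1}}.
\]

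Next, the substitution $s=2^{10}/w^4$, which sends $w=1\mapsto s=2^{10}$ and $w=(\cos\th)^{-1/2}\mapsto s=t$, converts the remaining integral into the one appearing in the statement:
\[
\int_1^{(\cos\th)^{-1/2}}\!\frac{dw}{\sqrt{w^4-1}}\ =\ \frac{\sqrt{2}}{t^{1/4}}\int_t^{2^{10}}\frac{(t/s)^{1/4}\,ds}{\sqrt{2^{10}s-s^2}}.
\]
Assembling all factors, the upper-half-plane limit takes the form $(\text{real prefactor})\cdot\exp(\text{real envelope})\cdot e^{-i\Phi(\th)}$, where the real prefactor collapses to $\sqrt{2}(n+\tfrac14)/(2^{10}-t)^{1/4}$ via $(2^{10}-t)^{1/4}=2^{5/2}\sqrt{\sin\th}$; the real envelope is produced by the $\r/2$ part of the elliptic integral; and the phase $\Phi(\th)=\pi/4-(2n+\tfrac12)\th+(n+\tfrac14)\int_t^{2^{10}}(t/s)^{1/4}\,ds/\sqrt{2^{10}s-s^2}$ collects the three imaginary contributions coming from $(1-2^{10}/t)^{-1/4}$, from the $(2n+1/2)$-power, and from the elliptic integral. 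Adding the complex-conjugate lower-half-plane limit replaces $e^{-i\Phi}$ by $2\cos\Phi$ and yields the stated formula.

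The main obstacle is the branch analysis: $1/\sqrt{1-w^4}$ has branch points at $w=\pm1,\pm i$, and one must carefully track on which side of the cut $[1,\infty)$ the deformed contour passes, because this fixes the sign of the imaginary part of $F$ and hence the sign inside the cosine. A secondary point, handled as in \cite[Thm~3.2]{xswang2012}, is justifying that the symmetric sum of the two one-sided limits (rather than only one) is what approximates $p_n$: this follows because the exterior asymptotic tracks only the dominant recurrence solution, and the subdominant one is recovered exactly by the complex-conjugate branch.
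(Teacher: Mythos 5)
Your approach is exactly the one the paper intends: it proves this proposition only by appeal to the argument of \cite[Theorem 3.2]{xswang2012}, i.e.\ analytic continuation of the exterior asymptotic formula onto the two sides of the cut $[0,2^{10}]$ and summation of the complex-conjugate boundary values to produce the cosine, and your branch tracking, the substitution $s=2^{10}/w^4$, and the assembly of modulus and phase all check out. The one slip is the real part of the continued elliptic integral: since $\rho=\int_1^\infty u^{-1/4}(u^2-u)^{-1/2}\,du=4\int_0^1(1-w^4)^{-1/2}\,dw$, one has $F(\tfrac{\pi}{2}\,|\,{-1})=\rho/4$, not $\rho/2$; with $\rho/4$ the prefactor $4(n+\tfrac14)t^{1/4}2^{-5/2}$ yields precisely the stated envelope $\exp[(n+\tfrac14)t^{1/4}2^{-5/2}\rho]$, whereas your value would double it.
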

Then we have a result in the interval containing $t_+=2^{10}$.
\begin{thm}
  Let $\nu = n+\frac{1}{4}$. With $K_n$ and $U(t)$ defined in \eqref{BVkn-def}, \eqref{BVzeta-def1} and \eqref{BVzeta-def2}, respectively, we have
  \begin{eqnarray} \label{BV-mainfor1}
    \mathcal{Q}_n(\nu^4 \, t) &\sim &  (-1)^n K_n \sqrt{2\pi} \; \nu t^{\frac{1}{4}} \exp\left( \frac{\sqrt{\pi} \, \Gamma(\frac{1}{4})}{ 2^{5/2}\,\Gamma(\frac{3}{4})} \nu t^{\frac{1}{4}} \right) \left(\frac{U(t)}{t(t-2^{10}) }\right)^{\frac{1}{4}} \nonumber \\
     && \times \left[ \Ai(\nu^{\frac{2}{3}} U(t)) \sum_{s=0}^\infty \frac{\tilde{A}_s(U)}{\nu^{s-\frac{1}{6}}} + \Ai'(\nu^{\frac{2}{3}} U(t)) \sum_{s=0}^\infty \frac{\tilde{B}_s(U)}{\nu^{s+\frac{1}{6}}} \right],
  \end{eqnarray}
  uniformly for $t$ in compact subsets of $(0,\infty)$.
\end{thm}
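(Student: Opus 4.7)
The plan is to mimic verbatim the matching argument already carried out for the Chen--Ismail polynomials in Section 2, since the Berg--Valent proposition supplies the uniform Airy-type expansion near $t_+=2^{10}$ in exactly the form used there. Writing $p_n(x)=C_1(x)P_n(x)+C_2(x)Q_n(x)$ with $P_n$, $Q_n$ as in \eqref{BV-Pn}--\eqref{BV-Qn}, the task reduces to identifying the two connection coefficients $C_1(x)$ and $C_2(x)$; by analogy with the Chen--Ismail case one anticipates $C_2\equiv 0$.

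The first step is to evaluate the Airy expansion of $P_n$ and $Q_n$ in the oscillatory regime $0<t<2^{10}$, where $U(t)<0$. Using the asymptotic formulas for $\Ai(-x)$ and $\Bi(-x)$ together with the prefactor $2^{9/2}\bigl(U(t)/[t(t-2^{10})]\bigr)^{1/4}=2^{9/2}\bigl(-U(t)/[t(2^{10}-t)]\bigr)^{1/4}$, the fractional powers of $U(t)$ cancel cleanly, producing
\[
P_n(\nu^4 t)\sim\frac{2^{9/2}}{\sqrt{\pi}\,\bigl(t(2^{10}-t)\bigr)^{1/4}}\cos\!\Bigl(\tfrac{2\nu}{3}[-U(t)]^{3/2}-\tfrac{\pi}{4}\Bigr),
\]
and an analogous formula with $\cos\to-\sin$ for $Q_n(\nu^4 t)$.

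The second (and most delicate) step is to verify that the phase produced by $[-U(t)]^{3/2}$ is literally the phase appearing in the oscillatory asymptotic \eqref{BV-pn-asy}. Parameterising $t=2^{10}\cos^2\theta$, one immediately gets $\cos^{-1}\!\bigl((t-2^9)/2^9\bigr)=2\theta$, so the first term of \eqref{BVzeta-def2} matches the $2\nu\theta$ piece in \eqref{BV-pn-asy}. The substitution $s=2^{10}y$ followed by $z=1-y$ converts the integral $\int_t^{2^{10}}(t/s)^{1/4}/\sqrt{2^{10}s-s^2}\,ds$ into $(\sqrt{2}\,t^{1/4}/8)\,\mathcal{B}_{1-t/2^{10}}(1/2,1/4)$, which is precisely the second term in \eqref{BVzeta-def2}. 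Hence
\[
\tfrac{2\nu}{3}[-U(t)]^{3/2}=2\nu\theta-\nu\int_t^{2^{10}}\frac{(t/s)^{1/4}}{\sqrt{2^{10}s-s^2}}\,ds,
\]
and the phase in \eqref{BV-pn-asy} equals $\pi/4-\tfrac{2\nu}{3}[-U(t)]^{3/2}$. The evenness of cosine then guarantees the $P_n$-contribution already reproduces the oscillating factor in \eqref{BV-pn-asy} while there is no sine piece to match, forcing $C_2(x)\equiv 0$.

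The third and final step is purely algebraic. Equating amplitudes in the matched formulas yields
\[
C_1(\nu^4 t)=\frac{\sqrt{2\pi}\,\nu\,t^{1/4}}{2^{9/2}}\exp\!\Bigl(\tfrac{\sqrt{\pi}\,\Gamma(1/4)}{2^{5/2}\Gamma(3/4)}\,\nu\,t^{1/4}\Bigr),
\]
after inserting the value of $\rho$ from the previous proposition. Multiplying $p_n=C_1 P_n$ by $(-1)^n K_n$ (recalling $\mathcal{Q}_n=(-1)^n K_n p_n$) absorbs the factor $2^{9/2}$ against the prefactor of $P_n$ and produces exactly \eqref{BV-mainfor1}; uniformity on compact subsets of $(0,\infty)$ is inherited from the uniformity of the Airy expansion in the proposition. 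The principal obstacle is the phase identity in the second step, where one must handle the change-of-variable carefully and keep track of the $\tau_0=1/4$ shift encoded in $\nu$; once that identity is in place the remainder of the argument is bookkeeping.
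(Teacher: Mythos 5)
Your proposal is correct and follows essentially the same route as the paper's own proof: expand $P_n$ and $Q_n$ via the oscillatory Airy asymptotics for $0<t<2^{10}$, use the definition \eqref{BVzeta-def2} of $U(t)$ to identify the phase with that of \eqref{BV-pn-asy} (your explicit verification of the incomplete-Beta/integral identity is a detail the paper leaves implicit in the definition of $U$), and match amplitudes to conclude $C_2\equiv 0$ and $C_1(x)=\frac{\sqrt{\pi}\,x^{1/4}}{2^4}\exp\bigl(x^{1/4}\frac{\sqrt{\pi}\,\Gamma(1/4)}{2^{5/2}\Gamma(3/4)}\bigr)$, which is exactly your value since $x^{1/4}=\nu t^{1/4}$. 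No substantive differences.
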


\begin{proof}

When $t<2^{10}$, recall the asymptotic expansions for Airy function in \eqref{Airy-asy1} and \eqref{Airy-asy2} again, we have from \eqref{BV-Pn} and \eqref{BV-Qn}
\begin{align*}
  p_n(x) &=  C_1(x) P_n(x) + C_2(x) Q_n(x) \sim C_1(x)  \frac{2^{9/2} }{[t (2^{10}-t)]^{\frac{1}{4}} }
   \frac{1}{\sqrt{\pi} }  \cos \biggl( \frac{2\nu}{3} (-U(t))^{\frac{3}{2}} -\frac{\pi}{4}\biggr) \nonumber \\
   & \quad - C_2(x)  \frac{2^{9/2} }{[t (2^{10}-t)]^{\frac{1}{4}} } \frac{1}{\sqrt{\pi} }  \sin \biggl( \frac{2\nu}{3} (-U(t))^{\frac{3}{2}} -\frac{\pi}{4} \biggr) \nonumber \\
   &= C_1(x)  \frac{2^{9/2} }{[t (2^{10}-t)]^{\frac{1}{4}} }
   \frac{1}{\sqrt{\pi} }  \cos \biggl( \nu( \cos^{-1} \left( \frac{t-2^9}{2^9} \right) - \int_t^{2^{10}} \frac{(t/s)^{\frac{1}{4}}}{\sqrt{s(2^{10}-s)}} ds)  -\frac{\pi}{4}\biggr) \nonumber \\
   &  \quad - C_2(x) \frac{2^{9/2} }{[t (2^{10}-t)]^{\frac{1}{4}} } \frac{1}{\sqrt{\pi} }   \sin \biggl( \nu( \cos^{-1} \left( \frac{t-2^9}{2^9} \right) - \int_t^{2^{10}} \frac{(t/s)^{\frac{1}{4}}}{\sqrt{s(2^{10}-s)}} ds)  -\frac{\pi}{4}\biggr). \nonumber
\end{align*}
Comparing the above formula and \eqref{BV-pn-asy}, we have
\begin{equation*}
  C_1(x) = \frac{\sqrt{\pi}\, x^{\frac{1}{4}}}{2^4} \exp\left( x^{\frac{1}{4}} \frac{\sqrt{\pi} \, \Gamma(\frac{1}{4})}{ 2^{5/2}\,\Gamma(\frac{3}{4})}  \right),  \qquad C_2(x) = 0.
\end{equation*}
This completes the proof of the theorem.
\end{proof}

\begin{rmk}
  Like in Remark \ref{fn-lim-measure}, we can obtain the following limiting zero distribution for $\mathcal{Q}_n(2^{10} n^4 x)$ from the recurrence relation \eqref{Berg&Val}
  \begin{equation*}
    \frac{1}{\pi} \int_{x^{1/4}}^1 \frac{1}{\sqrt{x(s^4-x)}} ds, \qquad x \in [0,1].
  \end{equation*}
\end{rmk}

Since the above theorem is uniformly valid in the neighbourhood of the large transition point $2^{10}$, we have the following asymptotic formula for $\mathcal{Q}_n(x)$.

\begin{cor} \label{BV-cor1}
  Let $\nu = n+\frac{1}{4}$ and $K_n$ be given in \eqref{BVkn-def}. Uniformly for a bounded real number $s$, we have, for $x = 2^{10} \nu^4 + 2^{10} \sqrt[3]{4} \,s  \,\nu^{\frac{10}{3}}$
  \begin{equation} \label{bv-airy}
    x^{-\frac{1}{4}} \exp\left( - \frac{\sqrt{\pi} \, \Gamma(\frac{1}{4})}{ 2^{5/2}\,\Gamma(\frac{3}{4})} x^{\frac{1}{4}} \right)  \mathcal{Q}_n(x) =  \frac{(-1)^n K_n \sqrt{\pi} \,  \nu^{\frac{1}{6}}}{16 \sqrt[3]{4} }  \left[ \Ai(s)  + O(\nu^{-\frac{2}{3}}) \right], \quad \textrm{as $\nu \to \infty.$}
  \end{equation}
\end{cor}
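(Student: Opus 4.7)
The plan is to derive the corollary by specializing the uniform Airy-type expansion in Theorem \eqref{BV-mainfor1} to the scaling $x = 2^{10}\nu^4 + 2^{10}\sqrt[3]{4}\, s\, \nu^{10/3}$, i.e.\ $t = 2^{10} + 2^{10}\sqrt[3]{4}\, s\, \nu^{-2/3}$. The first observation is that $\nu t^{1/4} = (\nu^4 t)^{1/4} = x^{1/4}$, which allows me to absorb the prefactors $t^{1/4}$ and $\exp\!\bigl(\tfrac{\sqrt{\pi}\,\Gamma(1/4)}{2^{5/2}\Gamma(3/4)}\nu t^{1/4}\bigr)$ into the expression $x^{-1/4}\exp\!\bigl(-\tfrac{\sqrt{\pi}\,\Gamma(1/4)}{2^{5/2}\Gamma(3/4)}x^{1/4}\bigr)$ appearing on the left of \eqref{bv-airy}. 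After this absorption, \eqref{BV-mainfor1} reads
\begin{equation*}
x^{-\frac14}\exp\!\Bigl(-\tfrac{\sqrt{\pi}\,\Gamma(1/4)}{2^{5/2}\Gamma(3/4)}x^{\frac14}\Bigr)\mathcal{Q}_n(x)
= (-1)^n K_n\sqrt{2\pi}\left(\tfrac{U(t)}{t(t-2^{10})}\right)^{\frac14}\!\Bigl[\Ai(\nu^{2/3}U(t))\nu^{1/6}+O(\nu^{-5/6})\Bigr].
\end{equation*}

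Next I would invoke the local expansion \eqref{BVUt-asy}, $U(t) = \tfrac{t-2^{10}}{2^{10}\sqrt[3]{4}} + O\bigl((t-2^{10})^2\bigr)$, which under the scaling gives $\nu^{2/3}U(t) = s + O(\nu^{-2/3})$. Because $\Ai$ and $\Ai'$ are entire and locally Lipschitz, this yields $\Ai(\nu^{2/3}U(t)) = \Ai(s) + O(\nu^{-2/3})$ uniformly for $s$ in a bounded set, and the same bound for the $\Ai'$-term (which already carries a factor $\nu^{-1/6}$ from the sum, hence contributes at most $O(\nu^{-5/6})$ to the bracket).

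The third step is a short algebraic simplification of the prefactor. Under the scaling, $t \to 2^{10}$, $t - 2^{10} \sim 2^{10}\sqrt[3]{4}\, s\, \nu^{-2/3}$, and $U(t)\sim s\,\nu^{-2/3}$, so
\begin{equation*}
\left(\frac{U(t)}{t(t-2^{10})}\right)^{1/4} = \left(\frac{1}{2^{20}\sqrt[3]{4}}\right)^{1/4}\!\bigl(1+O(\nu^{-2/3})\bigr) = \frac{1}{2^5\cdot 2^{1/6}}\bigl(1+O(\nu^{-2/3})\bigr).
\end{equation*}
Multiplying with $\sqrt{2\pi}$ and collecting powers of $2$ yields $\sqrt{2\pi}/2^{31/6} = \sqrt{\pi}/2^{14/3} = \sqrt{\pi}/(16\sqrt[3]{4})$, which is exactly the constant appearing in \eqref{bv-airy}.

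The only mildly delicate point is justifying that the remainder terms collected from the three sources (the higher-order terms in the Airy-type expansion of Proposition with coefficients $\tilde A_s,\tilde B_s$, the Taylor remainder in $U(t)$, and the expansion of the algebraic prefactor) can all be bounded by $O(\nu^{-2/3})$ uniformly for $s$ in a compact set. This follows because $U$ and the prefactor are smooth at $t=2^{10}$, the Airy functions are bounded together with their derivatives on compact sets, and the expansion in the previous proposition is uniform in a neighborhood of $t_+=2^{10}$. Combining the three bounds gives \eqref{bv-airy}.
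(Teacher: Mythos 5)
Your proposal is correct and follows exactly the paper's own route: substitute $t = 2^{10} + 2^{10}\sqrt[3]{4}\,s\,\nu^{-2/3}$ into \eqref{BV-mainfor1}, use $\nu t^{1/4}=x^{1/4}$ to absorb the exponential and $t^{1/4}$ prefactors, apply \eqref{BVUt-asy} to get $\nu^{2/3}U(t)=s+O(\nu^{-2/3})$, and simplify the constant (your $\sqrt{2\pi}/2^{31/6}=\sqrt{\pi}/(16\sqrt[3]{4})$ agrees with the paper's intermediate constant $\sqrt{2\pi}/(32\sqrt[6]{2})$). The paper compresses all of this into ``after some computations,'' so your write-up simply supplies the details, including the correct bookkeeping of the three error sources.
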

\begin{proof}
  Let $t = 2^{10} + 2^{10} \sqrt[3]{4} \,s \nu^{-\frac{2}{3}}$ in \eqref{BV-mainfor1} and recall the asymptotic formula of $U(t)$ in \eqref{BVUt-asy}, we have after some computations
  \begin{equation*}
    x^{-\frac{1}{4}} \exp\left( - \frac{\sqrt{\pi} \, \Gamma(\frac{1}{4})}{ 2^{5/2}\,\Gamma(\frac{3}{4})} x^{\frac{1}{4}} \right) \mathcal{Q}_n(x) =  (-1)^n K_n\sqrt{2\pi} \left( \frac{1}{32 \sqrt[6]{2}} \right)  \nu^{\frac{1}{6}} \left[ \Ai(s)  + O(\nu^{-\frac{2}{3}}) \right].
  \end{equation*}
  This above formula proves our corollary.
\end{proof}
\begin{rmk}
  We can put the left-hand side of \eqref{bv-airy} into its orthonormal form as in Remark \ref{fn-hat-rmk}.
  Let $\widehat {\mathcal{Q}}_{n}(x)$ be the orthonormal Berg-Letessier-Valent polynomials, then \eqref{bv-airy} can be rewritten as
  \begin{equation} \label{bv-hat-airy}
    x^{-\frac{1}{4}} \exp\left( - \frac{\sqrt{\pi} \, \Gamma(\frac{1}{4})}{ 2^{5/2}\,\Gamma(\frac{3}{4})} x^{\frac{1}{4}} \right)  \widehat {\mathcal{Q}}_n(x) =  \frac{ \nu^{-\frac{11}{6}}}{16 \sqrt[3]{4} }  \left[ \Ai(s)  + O(\nu^{-\frac{2}{3}}) \right], \quad \textrm{as $\nu \to \infty.$}
  \end{equation}
  Unlike \eqref{fn-hat-airy}, we don't have $w(x)$ on the left hand side of the above equation because the weight function $w(x)$ is unknown for the Berg-Letessier-Valent polynomials. But it seems reasonable to conjecture that at least one of the weight functions for the Berg-Letessier-Valent polynomials should behave like
  \begin{equation*}
    x^{-\frac{1}{2}} \exp\left( - \frac{\sqrt{\pi} \, \Gamma(\frac{1}{4})}{ 2^{3/2}\,\Gamma(\frac{3}{4})} x^{\frac{1}{4}} \right) \qquad \textrm{as }x \to \infty.
  \end{equation*}
  We shall revisit this issue in \S 6.
\end{rmk}

We have the following approximation.
\begin{cor}
  Let $x_{n,k}$ be the zeros of $\mathcal{Q}_n(x)$ such that $x_{n,1}>x_{n,2}> \cdots>x_{n,n}$, and $\mathfrak{a}_k$ be the zeros of the Airy function $\Ai(x)$ in the descending order. Then we have for fixed $k$ and large $n$
  \begin{equation}\label{xnk-cor32}
    x_{n,k} = 2^{10} \nu^4 + 2^{10} \sqrt[3]{4} \; \mathfrak{a}_k \nu^{10/3} + O(\nu^{8/3}),
  \end{equation}
  where $\nu = n + \frac{1}{4}$.
\end{cor}
\begin{proof}
  A combination of Corollary \ref{BV-cor1} and Lemma \ref{heth-lem} immediately gives us the result.
\end{proof}

We also have a result in the interval containing $t_-=0$.
\begin{thm}
  Let $\nu = n+\frac{1}{4}$. With $K_n$ and $U^*(t)$ defined in \eqref{BVkn-def}, \eqref{BVzeta2-def1} and \eqref{BVzeta2-def2}, respectively, we have
  \begin{align}
    \mathcal{Q}_n(\nu^4 \, t) &\sim  K_n \sqrt{\pi} \; \nu^{\frac{3}{2}} t^{\frac{1}{4}} \exp\left( \frac{\sqrt{\pi} \, \Gamma(\frac{1}{4})}{ 2^{5/2}\,\Gamma(\frac{3}{4})} \nu t^{\frac{1}{4}} \right) \left(\frac{U^*(t)}{t(2^{10}-t) }\right)^{\frac{1}{4}} \nonumber \\
     & \hspace{-60pt} \times \biggl\{ \sin\left( \frac{\sqrt{\pi} \, \Gamma(\frac{1}{4})}{ 2^{5/2}\,\Gamma(\frac{3}{4})} \nu t^{\frac{1}{4}} \right) \left[ J_{\frac{1}{2}}(\nu {U^*}^{\frac{1}{2}}(t)) \sum_{s=0}^\infty \frac{\tilde{A}_s^*(U^*)}{\nu^{s}} + J_{\frac{3}{2}}(\nu {U^*}^{\frac{1}{2}}(t)) \sum_{s=0}^\infty \frac{\tilde{B}_s^*(U^*)}{\nu^{s}} \right] \nonumber \\
      &\hspace{-65pt} - \cos\left( \frac{\sqrt{\pi} \, \Gamma(\frac{1}{4})}{ 2^{5/2}\,\Gamma(\frac{3}{4})} \nu t^{\frac{1}{4}} \right) \left[ Y_{\frac{1}{2}}(\nu {U^*}^{\frac{1}{2}}(t)) \sum_{s=0}^\infty \frac{\tilde{A}_s^*(U^*)}{\nu^{s}} + Y_{\frac{3}{2}}(\nu {U^*}^{\frac{1}{2}}(t)) \sum_{s=0}^\infty \frac{\tilde{B}_s^*(U^*)}{\nu^{s}} \right] \biggr\}, \label{BV-Bessel-formula}
  \end{align}
  uniformly for $-\infty< t \leq M < 2^{10}$.
\end{thm}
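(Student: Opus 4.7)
The plan is to imitate the matching argument of the previous (Airy-type) theorem: expand the Bessel-type representation
\[
p_n(x) = C_1^*(x)(-1)^n P_n^*(x) + C_2^*(x)(-1)^n Q_n^*(x)
\]
for large $\nu$ in the oscillatory window $0 < t < 2^{10}$, compare it with the independently derived oscillatory asymptotic \eqref{BV-pn-asy}, and read off $C_1^*(x)$ and $C_2^*(x)$. Because $\mathcal{Q}_n = (-1)^n K_n p_n$ and $Q_n^*$ uses the complex combination $W_\alpha = Y_\alpha - iJ_\alpha$, the matching will naturally split into a sine and a cosine piece in the argument $\rho^* x^{1/4}$, with $\rho^* := \sqrt{\pi}\,\Gamma(1/4)/(2^{5/2}\Gamma(3/4))$; rewriting $W_\alpha$ back in the $J, Y$ basis then produces the precise form of the claimed expansion.

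The first step is to apply the large-argument expansions $J_\alpha(z) \sim \sqrt{2/(\pi z)}\cos(z - \alpha\pi/2 - \pi/4)$ and $Y_\alpha(z) \sim \sqrt{2/(\pi z)}\sin(z - \alpha\pi/2 - \pi/4)$ to reduce \eqref{BV-Pn2} and \eqref{BV-Qn2} to oscillations in $\nu[U^*(t)]^{1/2}$. Since $\tilde{B}_0^*(U^*) = 0$, only the $J_{1/2}$ and $Y_{1/2}$ terms contribute at leading order, and the prefactor collapses cleanly: the product of $2^{9/2}\nu^{1/2}[U^*/(t(2^{10}-t))]^{1/4}$ with $\sqrt{2/(\pi\nu[U^*]^{1/2})}$ simplifies to $\sqrt{2}\,\nu\,(2^{10}-t)^{-1/4}\cdot(2/\sqrt{\pi})$, matching exactly the amplitude in \eqref{BV-pn-asy}.

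The second step is the identification of the phase in \eqref{BV-pn-asy} with $\nu[U^*(t)]^{1/2}$ up to an additive $\nu\rho^* t^{1/4}$ that will be absorbed into the coefficients. Substituting $s = 2^{10} u$ in the integral yields
\[
\int_t^{2^{10}} \frac{(t/s)^{1/4}}{\sqrt{2^{10}s - s^2}}\, ds = 2^{-5/2}\, t^{1/4}\bigl[B(1/4,1/2) - \mathcal{B}_{t/2^{10}}(1/4,1/2)\bigr],
\]
together with $B(1/4,1/2) = \sqrt{\pi}\,\Gamma(1/4)/\Gamma(3/4) = 2^{5/2}\rho^*$. Combined with the elementary identity $\cos^{-1}((2^9-t)/2^9) = \pi - 2\theta$ for $t = 2^{10}\cos^2\theta$, and the definition \eqref{BVzeta2-def2} of $U^*(t)$, the phase $\Phi$ of the cosine in \eqref{BV-pn-asy} simplifies to $\Phi = \nu\rho^* t^{1/4} - \nu[U^*(t)]^{1/2} - n\pi$, after cancellation of a $\pi/4$ against $\nu\pi - n\pi = \pi/4$. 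Using $\nu t^{1/4} = x^{1/4}$ to make the $n$-independence of $\rho^* x^{1/4}$ manifest and expanding by the addition formula, we obtain
\[
\cos\Phi = (-1)^n\bigl[\cos(\rho^* x^{1/4})\cos(\nu[U^*]^{1/2}) + \sin(\rho^* x^{1/4})\sin(\nu[U^*]^{1/2})\bigr],
\]
in which the $(-1)^n$ cancels against the $(-1)^n$ in $\mathcal{Q}_n = (-1)^n K_n p_n$.

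Matching this with the Bessel representation after decomposing $W_\alpha = Y_\alpha - iJ_\alpha$ then forces
\[
C_2^*(x) = -\frac{\sqrt{\pi}\, x^{1/4}}{2^{9/2}}\, e^{\rho^* x^{1/4}}\cos(\rho^* x^{1/4}),\qquad C_1^*(x) - iC_2^*(x) = \frac{\sqrt{\pi}\, x^{1/4}}{2^{9/2}}\, e^{\rho^* x^{1/4}}\sin(\rho^* x^{1/4}),
\]
both manifestly $n$-independent; regrouping the $J$- and $Y$-contributions produces \eqref{BV-Bessel-formula}. Uniformity for $-\infty < t \le M < 2^{10}$ follows from the uniform validity of the Cao--Li Bessel-type expansion in a full neighbourhood of $t_- = 0$ extending up to the next singularity at $t = 2^{10}$, together with the analyticity of $C_1^*, C_2^*$ in $x$, which permits analytic continuation through the exponential region $t < 0$ where the Bessel functions of imaginary argument turn into modified Bessel functions. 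The main obstacle is precisely the constant bookkeeping in this phase identification: one must collect the power-of-two simplification $\sqrt{2}/8 = 2^{-5/2}$, the beta-function identity, the reduction $\cos^{-1}((2^9-t)/2^9) = \pi - 2\theta$, and the $\nu\pi - n\pi = \pi/4$ cancellation, so that the phase from \eqref{BV-pn-asy} aligns exactly with $\nu\rho^* t^{1/4} - \nu[U^*(t)]^{1/2}$. Once this algebraic match is secured, the higher-order corrections follow from the standard iterative scheme of Wang and Wong \cite{wang-wong2003} adapted by Cao and Li \cite{Cao:Li}.
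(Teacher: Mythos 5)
Your proposal follows essentially the same route as the paper's own proof: reduce the Bessel-type representation to trigonometric form via the large-argument asymptotics of $J_{1/2}$ and $Y_{1/2}$, split the integral $\int_t^{2^{10}}$ as $2^{-5/2}\rho\,t^{1/4}-\int_0^t$ together with $2\theta=\pi-\cos^{-1}\bigl(\tfrac{2^9-t}{2^9}\bigr)$ and the $\nu\pi-n\pi=\pi/4$ cancellation to align the phase with $\nu[U^*(t)]^{1/2}-\rho^*x^{1/4}+n\pi$, and then read off $C_2^*=\hat C_2$ and $C_1^*-iC_2^*=\hat C_1$, which agree exactly with the paper's $\hat C_1(x)$ and $\hat C_2(x)$. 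The only blemish is the intermediate sentence asserting that the prefactor product collapses to $\sqrt{2}\,\nu\,(2^{10}-t)^{-1/4}(2/\sqrt{\pi})$ --- it actually collapses to $2^{5}\pi^{-1/2}[t(2^{10}-t)]^{-1/4}$, with the factor $\nu t^{1/4}=x^{1/4}$ supplied by the coefficient functions --- but since your final $C_1^*,C_2^*$ carry the $\sqrt{\pi}\,x^{1/4}/2^{9/2}$ factor, the bookkeeping closes correctly and this does not affect the result.
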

\begin{proof}
When $0<t\leq M$, we have from \eqref{BV-Pn2} and \eqref{BV-Qn2}
\begin{eqnarray*}
  p_n(x) &=&  (-1)^n C_1^*(x) P^*_n(x) + (-1)^n C^*_2(x) Q^*_n(x)  \nonumber \\
  & \sim & (-1)^n 2^{9/2}\nu^{\frac{1}{2}}\left(\frac{ U^*(t)}{t(2^{10}-t)}\right)^{\frac{1}{4}}  \biggl[  C_1^*(x) J_{\frac{1}{2}}(\nu {U^*}^{\frac{1}{2}}(t))  + C^*_2(x) W_{\frac{1}{2}}(\nu {U^*}^{\frac{1}{2}}(t)) \biggr]  \nonumber
\end{eqnarray*}
As $p_n(x)$ is real, we may choose $C_1^*(x) = \hat{C}_1(x) + i \hat{C}_2(x)$ and $C_2^*(x) = \hat{C}_2(x)$ such that
\begin{align}
   p_n(x) &\sim  (-1)^n 2^{9/2}\nu^{\frac{1}{2}}\left(\frac{ U^*(t)}{t(2^{10}-t)}\right)^{\frac{1}{4}}  \biggl[  \hat{C}_1(x)  J_{\frac{1}{2}}(\nu {U^*}^{\frac{1}{2}}(t))  + \hat{C}_2(x) Y_{\frac{1}{2}}(\nu {U^*}^{\frac{1}{2}}(t)) \biggr] \nonumber \\
   &\hspace*{-2em}= (-1)^n 2^{5} \left(\frac{ 1 }{t(2^{10}-t)}\right)^{\frac{1}{4}} \frac{1}{\sqrt{\pi}}  \biggl\{  \hat{C}_1(x)  \sin \biggl[ \nu \biggl(\int_0^t \frac{(t/s)^{\frac{1}{4}}}{\sqrt{s(2^{10} - s)}} ds-\cos^{-1} \left( \frac{2^9-t}{2^9} \right) \biggr)  \biggr] \nonumber  \\
   &   - \hat{C}_2(x) \cos \biggl[ \nu \biggl(\int_0^t \frac{(t/s)^{\frac{1}{4}}}{\sqrt{s(2^{10} - s)}} ds-\cos^{-1} \left( \frac{2^9-t}{2^9} \right) \biggr)  \biggr]\biggr\}. \label{BV-pn-proof}
\end{align}

Since
\begin{equation*}
  \int_{t}^{2^{10}} \frac{(t/s)^{1/4}}{\sqrt{2^{10} s -s^2}} ds =  2^{-5/2} \rho \, t^{\frac{1}{4}} - \int_0^t \frac{(t/s)^{1/4}}{\sqrt{2^{10} s -s^2}} ds
\end{equation*}
and $2\theta = \pi - \cos^{-1} \left( \frac{2^9-t}{2^9} \right)$
we rewrite \eqref{BV-pn-asy} as
\begin{align*}
  p_n(x)&\sim \sqrt{2} (n+\frac{1}{4}) \frac{1}{(2^{10} - t)^{\frac{1}{4}}} \exp\left[ (n+\frac{1}{4}) t^{\frac{1}{4}} 2^{-5/2} \rho \right]
  \nonumber\\
  &   \times\cos\{-{\pi\over4}+\nu \pi - \nu \cos^{-1} \left( \frac{2^9-t}{2^9} \right) - \nu \, t^{\frac{1}{4}} 2^{-5/2} \rho  + \nu \int_{0}^{t} \frac{(t/s)^{1/4}}{\sqrt{2^{10} s -s^2}} ds \} \nonumber \\
  &= \sqrt{2} \frac{x^{\frac{1}{4}} (-1)^n }{ (2^{10}t - t^2)^{\frac{1}{4}}} \exp\left[ x^{\frac{1}{4}} 2^{-5/2} \rho \right] \nonumber \\
   & \times \cos\left[ \nu  \biggl(\int_0^t \frac{(t/s)^{\frac{1}{4}}}{\sqrt{s(2^{10} - s)}} ds-\cos^{-1} \left( \frac{2^9-t}{2^9} \right) \biggr) -x^{\frac{1}{4}} 2^{-5/2} \rho \right].
\end{align*}
Comparing \eqref{BV-pn-proof} and the above formula, we have
\begin{eqnarray*}
  \hat{C}_1(x) = \frac{\sqrt{\pi}\, x^{\frac{1}{4}}}{2^{9/2}} \exp\left( x^{\frac{1}{4}} \frac{\sqrt{\pi} \, \Gamma(\frac{1}{4})}{ 2^{5/2}\,\Gamma(\frac{3}{4})}  \right)\sin\left( x^{\frac{1}{4}} \frac{\sqrt{\pi} \, \Gamma(\frac{1}{4})}{ 2^{5/2}\,\Gamma(\frac{3}{4})}  \right), \\
  \hat{C}_2(x) =- \frac{\sqrt{\pi}\, x^{\frac{1}{4}}}{2^{9/2}} \exp\left( x^{\frac{1}{4}} \frac{\sqrt{\pi} \, \Gamma(\frac{1}{4})}{ 2^{5/2}\,\Gamma(\frac{3}{4})}  \right)\cos\left( x^{\frac{1}{4}} \frac{\sqrt{\pi} \, \Gamma(\frac{1}{4})}{ 2^{5/2}\,\Gamma(\frac{3}{4})}  \right).
\end{eqnarray*}
This completes the proof of the theorem.
\end{proof}

\begin{rmk}
  To get the asymptotic formula for $t<0$ from \eqref{BV-Bessel-formula}, one needs to consider the value $t\pm i \varepsilon$ and take the limit as $\varepsilon \to 0.$ The interested readers may compare the formula \eqref{BV-Bessel-formula} with the Bessel-type asymptotic expansion for the Laguerre-type orthogonal polynomials in \cite[Eq. (2.15)]{van2007}.
\end{rmk}

\begin{rmk}
  From our expansion \eqref{BV-Bessel-formula}, it is also possible to study the smallest zeros of $\mathcal{Q}_n(x)$. Like in Corollary \ref{BV-cor1}, we have, as $\nu \to \infty$,
    \begin{eqnarray*}
    && x^{-\frac{1}{4}} \exp\left( - \frac{\sqrt{\pi} \, \Gamma(\frac{1}{4})}{ 2^{5/2}\,\Gamma(\frac{3}{4})} x^{\frac{1}{4}} \right)  \mathcal{Q}_n(x) \\
    && \qquad =  \frac{ K_n \sqrt{\nu\pi} }{16 \sqrt{2} }  \left[ \sin\left( \frac{\sqrt{\pi} \, \Gamma(\frac{1}{4})}{ 2^{5/2}\,\Gamma(\frac{3}{4})} x^{\frac{1}{4}} \right) J_{\frac{1}{2}}(s)- \cos\left( \frac{\sqrt{\pi} \, \Gamma(\frac{1}{4})}{ 2^{5/2}\,\Gamma(\frac{3}{4})} x^{\frac{1}{4}} \right) Y_{\frac{1}{2}}(s) + O(\nu^{-1}) \right],
  \end{eqnarray*}
  uniformly for a bounded real number $s$, where $x = 2^{8} \nu^{2} \, s$. Then approximations for the smallest zeros can be obtained from the above formula and Lemma 2.2. However, due to the sine and cosine terms, the formula is not as elegant as \eqref{xnk-cor32}. We leave it to interested readers.
  Similar situation happens for the smallest zeros of Conrad-Flajolet polynomials in the subsequent two sections.
\end{rmk}


\section{Conrad--Flajolet polynomials I} \label{cf1-sec}

We can easily get a bound for the largest zero for $\mathcal{Q}_n(x)$ from Theorem \ref{thmIsmLi}.

\begin{prop}
  Let $x_{n,k}$ be zeros of $\mathcal{Q}_n(x)$ such that $x_{n,1}>x_{n,2}> \cdots>x_{n,n}$, then we have the following bounds for all $n\geq 1$
  \begin{equation*}
    x_{n,1} < 108n^3 + 108 cn^2,
  \end{equation*}
  where $c$ is the positive constant given in \eqref{masterI}.
\end{prop}
\begin{proof}
  Recall the recurrence coefficients of $\mathcal{Q}_n(x)$ in \eqref{masterI} and choose $c_n =1/4$ in Theorem \ref{thmIsmLi}. Then the result follows.
\end{proof}
Note that we don't provide a bound for the smallest zero in this case.  The reason is that, although the chain sequence method can give us an estimation, it is not useful. In fact, we know that all the zeros of Conrad--Flajolet polynomials should be positive, but Theorem \ref{thmIsmLi} can only give us $ x_{n,n} > -6n-3. $ For the similar reason, we only consider the largest zero in Proposition \ref{M2-prop-cs}, too.

\subsection{Difference equation method}

As usually, we introduce $\mathcal{Q}_n(x) = (-1)^n K_n p_n(x)$ with
\begin{equation} \label{M1kn-def}
  K_n:= \frac{ \Gamma(\frac{n+\frac{c}{3}+1}{2})^2}{\Gamma(\frac{n+\frac{c}{3}+\frac{5}{3}}{2})^2}
\end{equation}
to arrive at the standard form. Note that
\begin{equation*}
  \frac{K_{n+1}}{K_{n-1}} = \frac{(3n+c)^2}{(3n+c+2)^2},
\end{equation*}
then the recurrence relation \eqref{bir&dea} with \eqref{masterI} reduces to the following standard form
\begin{equation} \label{M1-3term-new}
  p_{n+1}(x) - (A_n x + B_n) \, p_n(x) + p_{n-1}(x) = 0.
\end{equation}
As $n \to \infty$, the recurrence coefficients satisfy the following expansions
\begin{eqnarray}
  A_n = \frac{K_n}{\lambda_n \cdot K_{n+1}} \sim  n^{-\theta} \sum_{s=0}^\infty \frac{\alpha_s}{n^s}, \qquad
  B_n = -\frac{(\lambda_n + \mu_n)K_n}{\lambda_n \cdot K_{n+1}} \sim \sum_{s=0}^\infty \frac{\beta_s}{n^s} \label{M1ABn-asy}
\end{eqnarray}
with $\theta=3$,
\begin{eqnarray}
  \alpha_0 =  \frac{1}{27}, \qquad \quad \alpha_1 =  -\frac{c+1}{27} \label{M1alpha-s}
\end{eqnarray}
and
\begin{eqnarray}
  \beta_0 =  -2, \qquad \beta_1 = 0, \qquad \beta_2=\frac{2}{9}. \label{M1beta-s}
\end{eqnarray}
Let $x=(n+{c+1\over3})^3t$, then the roots of the characteristic equation \eqref{chara-eqn} coincide when
$\alpha_0 t_{\pm} + \beta_0 = \pm 2$, with $\alpha_0$ and $\beta_0$ given above. This gives us the following two transition points
\begin{equation*}
  t_+ = 108, \qquad t_- = 0.
\end{equation*}
These transition points are similar to what we have for Berg-Letessier-Valent polynomials. So the two Airy-type and Bessel-type expansions follow.

\begin{prop} \label{cf1-prop1}
  When $n$ is large, $p_n(x)$ in \eqref{M1-3term-new} can be expressed as
  \begin{equation*}
   p_n(x) = C_1(x) P_n(x) + C_2(x) Q_n(x),
  \end{equation*}
  where $C_1(x)$ and $C_2(x)$ are two $n$-independent functions, $P_n(x)$ and $Q_n(x)$ are two linearly independent solutions of \eqref{M1-3term-new} satisfying the following Airy-type asymptotic expansions in the neighbourhood of $t_+=108$
  \begin{equation} \label{M1-Pn}
  P_n(\nu^3 t) \sim  3\sqrt{6} \left(\frac{ U(t)}{t(t -108)}\right)^{\frac{1}{4}} \left[ \Ai(\nu^{\frac{2}{3}} U(t)) \sum_{s=0}^\infty \frac{\tilde{A}_s(U)}{\nu^{s-\frac{1}{6}}} + \Ai'(\nu^{\frac{2}{3}} U(t)) \sum_{s=0}^\infty \frac{\tilde{B}_s(U)}{\nu^{s+\frac{1}{6}}} \right]
\end{equation}
and
\begin{equation} \label{M1-Qn}
  Q_n(\nu^3 t) \sim 3\sqrt{6} \left(\frac{ U(t)}{t(t -108)}\right)^{\frac{1}{4}}  \left[ \Bi(\nu^{\frac{2}{3}} U(t)) \sum_{s=0}^\infty \frac{\tilde{A}_s(U)}{\nu^{s-\frac{1}{6}}} + \Bi'(\nu^{\frac{2}{3}} U(t)) \sum_{s=0}^\infty \frac{\tilde{B}_s(U)}{\nu^{s+\frac{1}{6}}} \right].
\end{equation}
Here $\nu = n + \frac{c+1}{3}$, the leading coefficients are given by
\begin{equation*}
  \tilde{A}_0(U) = 1, \qquad \tilde{B}_0(U) = 0
\end{equation*}
and $U(t)$ is defined as
\begin{eqnarray}
  \frac{2}{3} [U(t)]^{\frac{3}{2}} &= & \frac{\sqrt[3]{2t}}{6} \; \mathcal{B}_{1-\frac{108}{t}} (\frac{1}{2}, \frac{1}{3}) - \log \frac{t - 54 + \sqrt{t(t - 108)}}{54 }, \qquad t \geq 108, \label{M1zeta-def1} \\
  \frac{2}{3} [-U(t)]^{\frac{3}{2}} &=& \cos^{-1} \left( \frac{t-54}{54} \right) - \frac{\sqrt[3]{2t}}{6} \; \mathcal{B}_{1-\frac{t}{108}} (\frac{1}{2}, \frac{1}{6}) \qquad \qquad 0<t < 108; \label{M1zeta-def2}
\end{eqnarray}
see \eqref{icbeta} for the definition of $\mathcal{B}_x(a,b)$.
\end{prop}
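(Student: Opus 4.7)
The plan is to mirror the proof strategy already used for the Chen--Ismail polynomials (Proposition \ref{prop-uniform-Chen-Ismail}) and the Berg--Valent polynomials in the preceding section, since the standard recurrence \eqref{M1-3term-new} with the expansions \eqref{M1ABn-asy}--\eqref{M1beta-s} falls exactly into the framework developed by Wang and Wong in \cite{wang-wong2003}. Because $\theta = 3 \neq 0$ and $t_+ = 108 \neq 0$, the proposition will be a direct application of their main theorem once the correct scaling parameter $\nu$ and the correct $U(t)$ are identified.

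First I would verify the choice of $\tau_0$. Using the Wang--Wong prescription
\begin{equation*}
  \tau_0 = -\frac{\alpha_1 t_+ + \beta_1}{(2-\beta_0)\,\theta},
\end{equation*}
together with $\alpha_1 = -(c+1)/27$, $\beta_1 = 0$, $\beta_0 = -2$, $t_+ = 108$, $\theta = 3$, one obtains $\tau_0 = (c+1)/3$, matching the $\nu = n + (c+1)/3$ stated in the proposition. The characteristic equation for \eqref{M1-3term-new} with $t = x/\nu^3$ is $\lambda^2 - (t/27 - 2)\lambda + 1 = 0$, whose roots coalesce precisely at $t_+ = 108$ and $t_- = 0$, confirming that $t_+$ is a genuine turning point of Airy type.

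Next I would identify $U(t)$. The Wang--Wong construction defines $(2/3)U(t)^{3/2}$ as the integral of $\log \lambda_+(s)$, where $\lambda_+$ is the larger root of the characteristic equation, from the turning point to $t$. Concretely,
\begin{equation*}
  \lambda_+(t) = \frac{t-54 + \sqrt{t(t-108)}}{54}, \qquad t \ge 108,
\end{equation*}
so one writes
\begin{equation*}
  \frac{2}{3}[U(t)]^{3/2} = \int_{108}^{t} \frac{(t/s)^{1/2}\, ds}{\sqrt{s(s-108)}} - \log\frac{t-54+\sqrt{t(t-108)}}{54}.
\end{equation*}
A change of variable $s = 108/(1-u)$ recasts the first integral as $\tfrac{\sqrt[3]{2t}}{6}\,\mathcal{B}_{1-108/t}(1/2, 1/3)$, giving \eqref{M1zeta-def1}. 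For $0 < t < 108$ an analogous deformation onto the oscillatory branch yields \eqref{M1zeta-def2}. The leading coefficients $\tilde A_0(U) = 1$, $\tilde B_0(U) = 0$ come directly from the recursive scheme in \cite{wang-wong2003}.

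Finally the existence of linearly independent solutions $P_n$ and $Q_n$ with the claimed Airy and $\Bi$ expansions, together with the prefactor $3\sqrt{6}\,\bigl(U/[t(t-108)]\bigr)^{1/4}$, is then simply the conclusion of the Wang--Wong theorem applied to our standardized recurrence. The main obstacle I foresee is purely bookkeeping: verifying that the normalization constants actually produce the factor $3\sqrt{6}$ (as opposed to, say, a power of $2$ coming in through the $\alpha_0 = 1/27$ scaling) and checking the equivalence between the two natural forms of the integral (one as a contour integral of $\log\lambda_+$, the other as an incomplete Beta function). Everything else follows by direct substitution into Wang--Wong's formulas, exactly as was done for the Berg--Valent case in the preceding section.
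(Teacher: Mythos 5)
Your proposal takes essentially the same route as the paper: the paper's entire proof consists of computing $\tau_0=-\frac{\alpha_1 t_++\beta_1}{(2-\beta_0)\theta}=\frac{c+1}{3}$ and invoking the main theorem of \cite{wang-wong2003}, exactly as you do, and your verification of the transition points and of $\tau_0$ is correct. The only slip is in your supplementary identification of $U(t)$: since $p=3$ here, the integrand must be $\frac{(t/s)^{1/3}}{\sqrt{s(s-108)}}$ rather than $\frac{(t/s)^{1/2}}{\sqrt{s(s-108)}}$; with the substitution $s=108/(1-u)$ the exponent $1/3$ is what produces the factor $(1-u)^{-2/3}$ and hence $\mathcal{B}_{1-108/t}(\tfrac12,\tfrac13)$ with prefactor $\frac{\sqrt[3]{2t}}{6}$, whereas your $(t/s)^{1/2}$ would give $\mathcal{B}_{1-108/t}(\tfrac12,\tfrac12)$ instead — a harmless bookkeeping error in a check the paper does not even carry out, but worth correcting.
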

\begin{proof}
  With
  \begin{equation*}
  \tau_0 = -\frac{\alpha_1t_+ + \beta_1}{(2-\beta_0) \theta} = \frac{c+1}{3} \qquad \textrm{and} \qquad \nu = n + \tau_0,
\end{equation*}
our results follow from the main Theorem in \cite{wang-wong2003}.
\end{proof}

\begin{prop}
  When $n$ is large, $p_n(x)$ in \eqref{M1-3term-new} can be expressed as
  \begin{equation*}
  p_n(x) = C_1^*(x)(-1)^n P_n^*(x) + C_2^*(x) (-1)^n Q_n^*(x)
  \end{equation*}
  where $C_1^*(x)$ and $C_2^*(x)$ are two $n$-independent functions, $P_n^*(x)$ and $Q_n^*(x)$ are two linearly independent solutions of \eqref{M1-3term-new} satisfying the following Bessel-type asymptotic expansions in the neighbourhood of $t_-=0$
  \begin{equation} \label{M1-Pn2}
  P_n^*(\nu^3 t) \sim 3\sqrt{6}\; \nu^{\frac{1}{2}} \left(\frac{ U^*(t)}{t(108-t)}\right)^{\frac{1}{4}} \left[ J_{\frac{1}{3}}(\nu {U^*}^{\frac{1}{2}}(t)) \sum_{s=0}^\infty \frac{\tilde{A}_s^*(U^*)}{\nu^{s}} + J_{\frac{4}{3}}(\nu {U^*}^{\frac{1}{2}}(t)) \sum_{s=0}^\infty \frac{\tilde{B}_s^*(U^*)}{\nu^{s}} \right]
\end{equation}
and
\begin{equation} \label{M1-Qn2}
  Q_n^*(\nu^3 t) \sim 3\sqrt{6}\; \nu^{\frac{1}{2}} \left(\frac{ U^*(t)}{t(108-t)}\right)^{\frac{1}{4}} \left[ W_{\frac{1}{3}}(\nu {U^*}^{\frac{1}{2}}(t)) \sum_{s=0}^\infty \frac{\tilde{A}_s^*(U^*)}{\nu^{s}} + W_{\frac{4}{3}}(\nu {U^*}^{\frac{1}{2}}(t)) \sum_{s=0}^\infty \frac{\tilde{B}_s^*(U^*)}{\nu^{s}} \right].
\end{equation}
Here $\nu = n + \frac{c+1}{3}$, $W_\alpha(x)$ is defined in \eqref{w-alpha-def}, the leading coefficients are given by
\begin{equation}\label{M1-AB}
  \tilde{A}_0^*(U^*) = 1, \qquad \tilde{B}_0^*(U^*) = 0
\end{equation}
and $U^*(t)$ is defined as
\begin{eqnarray}
  [-U^*(t)]^{\frac{1}{2}} &= & \int_{t}^0 \frac{(t/s)^{\frac{1}{3}}}{\sqrt{s(s - 108)}} ds -\log \frac{54-t + \sqrt{t(t - 108)}}{54 } , \quad t \leq 0 \label{M1zeta2-def1} \\
  \ [U^*(t)]^{\frac{1}{2}} &=& \frac{\sqrt[3]{2t}}{6} \; \mathcal{B}_{\frac{t}{108}} (\frac{1}{6}, \frac{1}{2}) -\cos^{-1}\left( \frac{54-t}{54} \right),  \qquad \qquad 0<t < 108; \label{M1zeta2-def2}
\end{eqnarray}
see \eqref{icbeta} for the definition of $\mathcal{B}_x(a,b)$.
\end{prop}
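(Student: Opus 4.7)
The plan is to apply the main theorem of Cao and Li \cite{Cao:Li}, which produces Bessel-type asymptotic solutions of a normalized three-term recurrence in a neighbourhood of a transition point located at the origin. The necessary hypotheses are already in place: by \eqref{M1ABn-asy}--\eqref{M1beta-s} the coefficients of \eqref{M1-3term-new} satisfy $A_n \sim n^{-\theta}\sum_{s}\alpha_s n^{-s}$ and $B_n \sim \sum_{s}\beta_s n^{-s}$ with $\theta = 3$, $\alpha_0 = 1/27$, $\beta_0 = -2$, and $\beta_1 = 0$. The characteristic equation \eqref{chara-eqn} has coalescing roots $\lambda = \pm 1$ precisely when $\alpha_0 t + \beta_0 = \pm 2$, yielding $t_+ = 108$ and $t_- = 0$; the Bessel-type regime is the expected one at $t_-$ because the leading term of $A_n$ vanishes there and the roots $\lambda_\pm$ approach $-1$ while staying on the unit circle as $t \to 0^+$.

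First I would reuse the rescaling $x = \nu^3 t$ and the shift $\nu = n + (c+1)/3$ already adopted in Proposition \ref{cf1-prop1}. Although Cao and Li's theorem is compatible with any $O(1)$ shift, keeping the same $\nu$ ensures that the Airy-type and Bessel-type expansions can later be matched against one another using a common parameter. Second, I would read off the Bessel indices directly from the theorem: since $A_n$ decays like $n^{-\theta}$ with $\theta = 3$, the Cao--Li classification produces Bessel functions of orders $1/\theta = 1/3$ and $1/\theta + 1 = 4/3$, in agreement with the statement. The factors $(-1)^n$ in front of $P_n^*$ and $Q_n^*$ account for the sign relation $\mathcal{Q}_n(x) = (-1)^n K_n p_n(x)$ together with the sign of the coalescing roots $\lambda_\pm = -1$ at $t_-$.

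Third, I would extract the phase function $U^*(t)$ from the recipe in \cite{Cao:Li}: integrating the local action differential built from $\lambda_\pm(t) = e^{\pm i \phi(t)}$ and writing $\alpha_0 t + \beta_0 = -2\cos\phi(t)$ produces, after accounting for the $n^{-\theta}$ scaling of $A_n$, an integral of the weighted form $\int (t/s)^{1/3}\, ds/\sqrt{s(108-s)}$, and the logarithmic contribution at $t=0$ comes from the vanishing of $A_n$ there. A change of variables $s = 108 y$ together with the definition \eqref{icbeta} of the incomplete Beta function then produces \eqref{M1zeta2-def2} for $0<t<108$, while analytic continuation across $t=0$ yields \eqref{M1zeta2-def1}. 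The normalizations $\tilde A_0^*(U^*) = 1$ and $\tilde B_0^*(U^*) = 0$ are the standard initial conditions in Cao and Li's construction.

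The main obstacle is verifying that the phase $U^*(t)$ so defined is single-valued and monotonically increasing on a full neighbourhood of $t_- = 0$, in particular that the branches of $\cos^{-1}$ and the incomplete Beta function in \eqref{M1zeta2-def1}--\eqref{M1zeta2-def2} splice continuously across $t=0$ with $U^*(0)=0$ and a local expansion $U^*(t) = t/c_0 + O(t^2)$ analogous to the Berg--Valent formula after \eqref{BVUt-asy}. Once this calibration is confirmed, both \eqref{M1-Pn2} and \eqref{M1-Qn2} are a direct specialization of the main theorem of \cite{Cao:Li} to the recurrence \eqref{M1-3term-new}.
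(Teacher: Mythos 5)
Your overall strategy coincides with the paper's: the authors' entire proof is the one-line observation that, given the expansions \eqref{M1ABn-asy}--\eqref{M1beta-s} and the shift $\nu=n+\tau_0$ fixed in Proposition \ref{cf1-prop1}, the statement is a direct specialization of the main theorem of \cite{Cao:Li} to the transition point $t_-=0$. Your verification of the hypotheses, the identification of $t_\pm$ from the characteristic equation, the explanation of the $(-1)^n$ via the coalescing roots $\lambda_\pm=-1$ at $t_-$, and your closing remark about checking $U^*(0)=0$ with $U^*(t)=\tfrac{4t}{27}+O(t^2)$ (which the paper records in \eqref{M1-U*}) are all consistent with that.

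There is, however, one concretely wrong step: your claim that the Bessel indices are read off as $1/\theta$ and $1/\theta+1$ because $A_n$ decays like $n^{-\theta}$. This rule is contradicted twice within this very paper. For the Berg--Valent polynomials one has $\theta=4$ but the orders are $\tfrac12$ and $\tfrac32$, not $\tfrac14$ and $\tfrac54$; and for the Conrad--Flajolet polynomials II one has the same $\theta=3$ as here, yet the orders are $\tfrac23$ and $\tfrac53$. As the authors note when treating the second family, the order of the Bessel functions in the Cao--Li expansion is governed by the \emph{subleading} coefficients of the expansions of $A_n$ and $B_n$ (here $\alpha_1=-\tfrac{c+1}{27}$ and $\beta_2=\tfrac{2}{9}$ versus $\alpha_1=-\tfrac{2c+1}{54}$ and $\beta_2=\tfrac{5}{36}$ for family II), not by $\theta$ alone. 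So the index $\tfrac13$ in \eqref{M1-Pn2} is correct, but your derivation of it would fail if applied to the neighbouring cases; to make the argument sound you must actually compute the order parameter prescribed by the Cao--Li theorem from $\alpha_1$ and $\beta_2$, rather than from the decay rate $\theta$.
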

\begin{proof}
  With the asymptotic expansions for $A_n$ and $B_n$ in \eqref{M1ABn-asy}, our results follow from the main Theorem in \cite{Cao:Li}.
\end{proof}
\begin{rmk}
  Again, the integral in \eqref{M1zeta2-def1} can be written as a hypergeometric function.
  \begin{equation*}
    \int_{t}^0 \frac{(t/s)^{\frac{1}{3}}}{\sqrt{s(s - 108)}} ds = \sqrt[3]{2} (-t)^{\frac{1}{2}} \; {}_2F_1(\frac{1}{6},\frac{1}{2},\frac{7}{6},t).
  \end{equation*}
  $U(t)$ and  $U^*(t)$ are two monotonically increasing function in the neighborhood of 108 and 0, respectively,  with the following asymptotic formulas
  \begin{eqnarray} \label{M1Ut-asy}
    U(t)=\frac{t-108}{54 \sqrt[3]{18}} + O(t-108)^2, \qquad \textrm{as } t \to 108.
  \end{eqnarray}
  and
  \begin{equation}\label{M1-U*}
    U^*(t) = \frac{4t}{27} + O(t^2) \qquad \textrm{as } t \to 0.
  \end{equation}
\end{rmk}


\subsection{Determination of $C_1(x)$ and $C_2(x)$}

According to Lemma \ref{lemma-birth}, we have the following result.
\begin{prop}
  Let $x=x_n:=(n+(c+1)/3)^3t$ with $t\in\mathbb{C}\cut[0,108]$, we have as $n\to\infty$,
  \begin{align*}
    p_n(x_n)&\sim {\G((c+1)/3)\G((c+2)/3)^2t^n\over3^{3n}n^{c+1}(2\pi/n)^{3/2} 2^{2/3}}(1-108/t)^{-1/4}\left[{1+\sqrt{1-108/t}\over 2}\right]^{2n+1+{(2c-1)\over 3}}
  \nonumber\\&\times\exp\{\int_0^1{3n+c+1\over\sqrt{1-108s^3/t}}ds\}.
  \end{align*}
\end{prop}
\begin{proof}
 Recall the rates given in \eqref{masterI} and $K_n$ defined in \eqref{M1kn-def}. By Stirling's formula, we have $K_n\sim 2^{2/3} n^{-2/3}$ and
  \begin{align*}
    \prod_{k=0}^{n-1}\la_k&={3^{3n}\G((c+1)/3+n)\G((c+2)/3+n)^2\over\G((c+1)/3)\G((c+2)/3)^2}
    \\&\sim{3^{3n}n^{c+5/3}\G(n)^3\over\G((c+1)/3)\G((c+2)/3)^2}
    \\&\sim{3^{3n}n^{c+5/3}(2\pi/n)^{3/2}(n/e)^{3n}\over\G((c+1)/3)\G((c+2)/3)^2}.
  \end{align*}
  It thus follows from \eqref{qn&pin} that
  $$\pi_n(x)=K_n\prod_{k=0}^{n-1}\la_kp_n(x)\sim{3^{3n}n^{c+1}(2\pi/n)^{3/2}(n/e)^{3n} 2^{2/3}\over\G((c+1)/3)\G((c+2)/3)^2}p_n(x).$$
  Since $\pi_n(x)$ satisfies the asymptotic formula \eqref{pi-outer-general} with $p=3$, $b=27$, $u=c+5/3$ and $v=c+1/3$, our result follows from the above formula.
\end{proof}
Again, using an argument similar to that in the proof of \cite[Theorem 3.2]{xswang2012}, we obtain the following result in the oscillatory region.
\begin{prop}
  Let $x=x_n:=(n+(c+1)/3)^3t$ with $t=108\cos^2\theta,\theta \in [\delta,\frac{\pi}{2}-\delta]$, we have as $n\to\infty$,
\begin{align} \label{M1-pn-asy}
    p_n(x_n)&\sim {3^{c+1} \G({c+1 \over 3})\G^2({c+2 \over 3}) \over 2^{7/6} \pi^{3/2} (108t-t^2)^{1/4}} \frac{1}{(n^3t)^{\frac{2c-1}{6}}} \exp\left( (n+{c+1\over 3}) (\frac{t}{108})^{1/3} \rho \right)
  \nonumber\\&
  \times \cos\{{\pi\over4}-(2n+{(2c+2)\over 3})\theta + (n+{ c+1\over 3 }) \int_{t}^{108}
  \frac{(t/s)^{1/3}}{\sqrt{108s-s^2}}ds \},
\end{align}
where
\begin{equation} \label{rho-M1}
  \rho = \int_1^\infty \frac{du}{u^{\frac{1}{3}}\sqrt{u^2-u}} = \frac{\sqrt{\pi} \, \Gamma(\frac{1}{3})}{\Gamma(\frac{5}{6})}.
\end{equation}
\end{prop}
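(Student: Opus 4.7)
The plan is to deduce the oscillatory asymptotic from the exponential-region formula in the preceding proposition by analytic continuation across the cut $[0,108]$, following the strategy of Wang in \cite[Theorem 3.2]{xswang2012}. Since the rates in \eqref{masterI} are real and $c>0$, the coefficients of $p_n(x)$ are real, so for real $x_n$ we have
\[ p_n(x_n)=2\,\Re\bigl[p_n(x_n+i\varepsilon)\bigr]\Big|_{\varepsilon\to 0^+}, \]
and I would apply the exponential-region asymptotic at $t+i\varepsilon$ with $t=108\cos^2\theta$ and let $\varepsilon\to 0^+$.

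The key boundary computations are the following. First, $1-108/t=-\tan^2\theta+i0^+$, so I would fix the branches as $\sqrt{1-108/t}=i\tan\theta$ and $(1-108/t)^{-1/4}=e^{-i\pi/4}(\cot\theta)^{1/2}$. Second, $(1+i\tan\theta)/2=e^{i\theta}/(2\cos\theta)$, hence
\[ \Bigl[\tfrac{1+\sqrt{1-108/t}}{2}\Bigr]^{2n+1+(2c-1)/3}=\frac{\exp\{i\theta(2n+(2c+2)/3)\}}{(2\cos\theta)^{2n+(2c+2)/3}}. \]
Third, the integral $\int_0^1(3n+c+1)/\sqrt{1-108s^3/t}\,ds$ should be split at $s_0=(t/108)^{1/3}$: the portion $[0,s_0]$ stays real and, after the substitution $s=(t/108)^{1/3}u$, yields $(n+(c+1)/3)(t/108)^{1/3}\rho$ plus lower-order corrections (recall $\rho=\int_0^1 du/\sqrt{1-u^3}$ up to a factor of $3$, matching \eqref{rho-M1}); on $[s_0,1]$ the branch gives $1/\sqrt{1-108s^3/t}=-i/\sqrt{108s^3/t-1}$, and a further substitution $s\mapsto (ts/108)^{1/3}\cdot 108^{1/3}$ converts the resulting imaginary contribution into $-i(n+(c+1)/3)\int_t^{108}(t/s)^{1/3}/\sqrt{108s-s^2}\,ds$.

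Assembling these ingredients, the boundary value of the previous formula becomes
\[ \bigl(\text{prefactor}\bigr)\cdot e^{-i\pi/4}\cdot \frac{e^{i\theta(2n+(2c+2)/3)}}{(2\cos\theta)^{2n+(2c+2)/3}}\cdot\exp\Bigl\{(n+\tfrac{c+1}{3})(\tfrac{t}{108})^{1/3}\rho-i(n+\tfrac{c+1}{3})\!\!\int_t^{108}\!\!\tfrac{(t/s)^{1/3}}{\sqrt{108s-s^2}}ds\Bigr\}. \]
Taking twice the real part produces the claimed cosine with phase $\pi/4-(2n+(2c+2)/3)\theta+(n+(c+1)/3)\int_t^{108}(t/s)^{1/3}/\sqrt{108s-s^2}\,ds$. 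The algebraic prefactor collapses via $t^n=(108\cos^2\theta)^n$ and $108=4\cdot 27$, so that $t^n/((2\cos\theta)^{2n}3^{3n})=1$; the residual powers $(\cos\theta)^{-(2c+2)/3}$, $(\cot\theta)^{1/2}$, and $n^{-c-1}(2\pi/n)^{-3/2}$ combine with $(108t-t^2)^{-1/4}=(108\cos\theta\sin\theta)^{-1/2}$ to yield the stated prefactor $3^{c+1}\Gamma(\tfrac{c+1}{3})\Gamma^2(\tfrac{c+2}{3})/[2^{7/6}\pi^{3/2}(108t-t^2)^{1/4}(n^3t)^{(2c-1)/6}]$.

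The main obstacle will be the careful bookkeeping of branch choices in steps (i)--(iii) and, in particular, the identification of the two contributions to $\int_0^1(3n+c+1)/\sqrt{1-108s^3/t}\,ds$ after the branch split: the real part must be shown to give the exponential prefactor $\exp((n+(c+1)/3)(t/108)^{1/3}\rho)$, and the imaginary part must be recast in the form $\int_t^{108}(t/s)^{1/3}/\sqrt{108s-s^2}\,ds$ via an appropriate change of variable. Once these matching computations are verified, the collapse of the algebraic prefactor is a routine Stirling-type simplification.
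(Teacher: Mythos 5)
Your proposal follows exactly the route the paper takes: the paper's entire ``proof'' of this proposition is the sentence citing the argument of \cite[Theorem 3.2]{xswang2012}, i.e., continuing the exponential-region formula of the preceding proposition to the boundary of the cut $[0,108]$ and adding the two conjugate boundary values. Your branch computations are all correct and complete the argument: $\sqrt{1-108/t}\to i\tan\theta$ from above; $[(1+i\tan\theta)/2]^{2n+(2c+2)/3}=e^{i(2n+(2c+2)/3)\theta}(2\cos\theta)^{-2n-(2c+2)/3}$; the split of $\int_0^1(3n+c+1)(1-108s^3/t)^{-1/2}ds$ at $s_0=(t/108)^{1/3}$ gives exactly $\nu s_0\rho$ on $[0,s_0]$ (no lower-order correction is in fact needed, since $\rho=3\int_0^1(1-u^3)^{-1/2}du$) and $-i\nu\int_t^{108}(t/s)^{1/3}(108s-s^2)^{-1/2}ds$ on $[s_0,1]$ (the correct substitution is $\sigma=t/s^3$, not the one you wrote, but the identity you claim is true); and the prefactor collapse via $t^n/(3^{3n}(2\cos\theta)^{2n})=1$ and $108^{(c+1)/3}=2^{2(c+1)/3}\,3^{c+1}$ reproduces the stated constant $2^{-7/6-(2c+2)/3}\pi^{-3/2}$ together with the correct powers of $n$, $\cos\theta$ and $\sin\theta$.

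The one genuine flaw is your justification of the factor of $2$. The identity $p_n(x_n)=2\Re[p_n(x_n+i\varepsilon)]\big|_{\varepsilon\to0^+}$ is false as stated: $p_n$ is a polynomial with real coefficients, so the right-hand side converges to $2p_n(x_n)$. What is actually true --- and what requires the error analysis of \cite[Theorem 3.2]{xswang2012} rather than a reality argument --- is that on the cut the polynomial is asymptotic to the \emph{sum of the two boundary values of the asymptotic approximant}, $G_n(t+i0)+G_n(t-i0)$: off the cut only the dominant of the two characteristic solutions is visible in the exponential-region formula, and on the cut the two become comparable and both contribute. Since you invoke that theorem anyway, the gap lies in the stated reason rather than in the availability of a correct one, but the sentence should be replaced by an appeal to (or reproduction of) the matching argument itself.
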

Then we get the Airy-type expansion in the interval containing $t_+= 108.$
\begin{thm}
  Let $\nu = n+\frac{c+1}{3}$. With $K_n$ and $U(t)$ defined in \eqref{M1kn-def}, \eqref{M1zeta-def1} and \eqref{M1zeta-def2}, respectively, we have
  \begin{eqnarray} \label{M1-mainfor1}
    \mathcal{Q}_n(\nu^3 \, t) &\sim &  (-1)^n K_n {3^{c+1} \G({c+1 \over 3})\G^2({c+2 \over 3}) \over 2^{\frac{7}{6}} \pi \, \nu^{\frac{2c-1}{2}} \, t^{\frac{2c-1}{6}}}  \exp\left(  \frac{\sqrt{\pi} \, \Gamma(\frac{1}{3})}{3\sqrt[3]{4} \, \Gamma(\frac{5}{6})} \nu t^{\frac{1}{3}} \right) \left(\frac{U(t)}{t(t-108) }\right)^{\frac{1}{4}} \nonumber \\
     && \times \left[ \Ai(\nu^{\frac{2}{3}} U(t)) \sum_{s=0}^\infty \frac{\tilde{A}_s(U)}{\nu^{s-\frac{1}{6}}} + \Ai'(\nu^{\frac{2}{3}} U(t)) \sum_{s=0}^\infty \frac{\tilde{B}_s(U)}{\nu^{s+\frac{1}{6}}} \right],
  \end{eqnarray}
  uniformly for $0<\delta \leq t < \infty$.
\end{thm}

\begin{proof}

When $t<108$, recall the asymptotic expansions for Airy function in \eqref{Airy-asy1} and \eqref{Airy-asy2} again, we have from \eqref{M1-Pn} and \eqref{M1-Qn}
\begin{align*}
  p_n(x) &=  C_1(x) P_n(x) + C_2(x) Q_n(x) \sim C_1(x)  \frac{3\sqrt{6} }{[t (108-t)]^{\frac{1}{4}} }
   \frac{1}{\sqrt{\pi} }  \cos \biggl( \frac{2\nu}{3} (-U(t))^{\frac{3}{2}} -\frac{\pi}{4}\biggr) \nonumber \\
   & \quad - C_2(x)  \frac{3\sqrt{6}}{[t (108-t)]^{\frac{1}{4}} } \frac{1}{\sqrt{\pi} }  \sin \biggl( \frac{2\nu}{3} (-U(t))^{\frac{3}{2}} -\frac{\pi}{4} \biggr) \nonumber \\
   &= C_1(x)  \frac{3\sqrt{6} }{[t (108-t)]^{\frac{1}{4}} }
   \frac{1}{\sqrt{\pi} }  \cos \biggl( \nu( \cos^{-1} \left( \frac{t-54}{54} \right) - \int_t^{108} \frac{(t/s)^{\frac{1}{3}}}{\sqrt{s(108-s)}} ds)  -\frac{\pi}{4}\biggr) \nonumber \\
   &  \quad - C_2(x) \frac{3 \sqrt{6} }{[t (108-t)]^{\frac{1}{4}} } \frac{1}{\sqrt{\pi} }   \sin \biggl( \nu( \cos^{-1} \left( \frac{t-54}{54} \right) - \int_t^{108} \frac{(t/s)^{\frac{1}{3}}}{\sqrt{s(108-s)}} ds)  -\frac{\pi}{4}\biggr). \nonumber
\end{align*}
Comparing the above formula and \eqref{M1-pn-asy}, we have
\begin{equation*}
  C_1(x) = {3^{c-1/2} \G((c+1)/3)\G^2((c+2)/3) \over 2^{4/3} \pi x^{\frac{2c-1}{6}}}  \exp\left( (\frac{x}{108})^{\frac{1}{3}} \frac{\sqrt{\pi} \, \Gamma(\frac{1}{3})}{\Gamma(\frac{5}{6})}  \right),  \qquad C_2(x) = 0.
\end{equation*}
This completes the proof of the theorem.
\end{proof}

\begin{rmk}
  Like in Remark \ref{fn-lim-measure}, we can obtain the following limiting zero distribution for $\mathcal{Q}_n(108 n^3 x)$ from the recurrence relation \eqref{masterI}
  \begin{equation} \label{masterI-measure}
    \frac{1}{\pi} \int_{x^{1/3}}^1 \frac{1}{\sqrt{x(s^3-x)}} ds, \qquad x \in [0,1].
  \end{equation}
\end{rmk}

Since the above theorem is uniformly valid in the neighbourhood of the large transition point $108$, we have the following asymptotic formula for $\mathcal{Q}_n(x)$.

\begin{cor} \label{M1-cor1}
  Let $\nu = n+\frac{c+1}{3}$ and $K_n$ be given in \eqref{M1kn-def}. Uniformly for a bounded real number $s$, we have, for $x = 108 \nu^3 + 54 \sqrt[3]{18} \,s  \,\nu^{\frac{7}{3}}$
  \begin{eqnarray}
    &&\hspace*{-1cm} x^{\frac{2c-1}{6}} \exp\left( - \frac{\sqrt{\pi} \, \Gamma(\frac{1}{3})}{3\sqrt[3]{4} \, \Gamma(\frac{5}{6})} x^{\frac{1}{3}} \right) \mathcal{Q}_n(x) =  (-1)^n K_n {3^{c-\frac{2}{3} } \G({c+1 \over 3})\G^2({c+2 \over 3}) \nu^{\frac{1}{6}} \over 4 \pi }    \left[ \Ai(s)  + O(\nu^{-\frac{2}{3}}) \right], \nonumber \\
     && \hspace{10cm} \textrm{as $\nu \to \infty.$} \label{M1-airy}
  \end{eqnarray}
\end{cor}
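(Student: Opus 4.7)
The plan is to specialize Theorem \eqref{M1-mainfor1} to the scaling regime prescribed by the corollary, following exactly the pattern established in the proofs of Corollary \ref{fn-cor1} and Corollary \ref{BV-cor1}. Setting $t := 108 + 54\sqrt[3]{18}\, s\, \nu^{-2/3}$, the identity $x = \nu^3 t$ reproduces $x = 108\nu^3 + 54\sqrt[3]{18}\, s\, \nu^{7/3}$, so the task reduces to evaluating the right-hand side of \eqref{M1-mainfor1} at this $t$ and verifying the claimed Airy reduction uniformly on compact $s$-sets.

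First I would invoke the local asymptotic \eqref{M1Ut-asy}, namely $U(t) = (t-108)/(54\sqrt[3]{18}) + O((t-108)^2)$, which under the substitution gives $\nu^{2/3} U(t) = s + O(\nu^{-2/3})$. Combined with $\tilde{A}_0(U)=1$, $\tilde{B}_0(U)=0$ from Proposition \ref{cf1-prop1}, and continuity of $\Ai$ and $\Ai'$, the bracketed Airy series in \eqref{M1-mainfor1} collapses to $\nu^{1/6}\bigl[\Ai(s) + O(\nu^{-2/3})\bigr]$. The higher-order terms $\tilde{A}_s, \tilde{B}_s$ for $s\ge 1$ contribute only to the $O(\nu^{-2/3})$ remainder.

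Next I would simplify the remaining prefactor. The factor $\nu^{(2c-1)/2} t^{(2c-1)/6}$ appearing in the denominator of \eqref{M1-mainfor1} equals $x^{(2c-1)/6}(1+o(1))$, matching exactly the $x^{(2c-1)/6}$ on the left-hand side of \eqref{M1-airy}; likewise the explicit exponential on the LHS of the corollary is precisely designed to cancel the exponential factor in \eqref{M1-mainfor1}. The remaining geometric prefactor $\bigl(U(t)/[t(t-108)]\bigr)^{1/4}$ tends, in view of \eqref{M1Ut-asy} and $t\to 108$, to $\bigl(108 \cdot 54\sqrt[3]{18}\bigr)^{-1/4}$. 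A short computation of powers of $2$ and $3$ gives $108 \cdot 54 \cdot 18^{1/3} = 2^{10/3}\,3^{20/3}$, hence $(108 \cdot 54\sqrt[3]{18})^{-1/4} = 2^{-5/6}\,3^{-5/3}$, and multiplying by the constant $3^{c+1}/(2^{7/6}\pi)$ already present in \eqref{M1-mainfor1} yields $3^{c-2/3}/(4\pi)$, matching \eqref{M1-airy}.

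The main obstacle, such as it is, is bookkeeping: correctly tracking the $\nu$-dependence of every factor, the one-sided limit of $(t(t-108))^{1/4}$ as $t\to 108^+$ (on which side $U(t)>0$), and the various powers of $2$ and $3$ in the numerical constant. No new analytic idea is required beyond the uniform Airy expansion of Theorem \eqref{M1-mainfor1} and the local expansion \eqref{M1Ut-asy}; the error is $O(\nu^{-2/3})$ uniformly in $s$ on compacta because the remainder in \eqref{M1Ut-asy} is quadratic in $t-108$ and the Airy series in \eqref{M1-mainfor1} is an asymptotic series in $1/\nu$ uniform in $t$ near $108$.
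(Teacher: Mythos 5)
Your proposal is correct and follows essentially the same route as the paper: substitute $t = 108 + 54\sqrt[3]{18}\,s\,\nu^{-2/3}$ into \eqref{M1-mainfor1}, use \eqref{M1Ut-asy} to get $\nu^{2/3}U(t) = s + O(\nu^{-2/3})$ and $\bigl(U(t)/[t(t-108)]\bigr)^{1/4} \to (2^{10/3}3^{20/3})^{-1/4} = 2^{-5/6}3^{-5/3}$, and simplify the constants to $3^{c-2/3}/(4\pi)$. Your constant bookkeeping agrees with the paper's intermediate formula $\frac{3^{c+1}}{2^{7/6}\pi}\cdot\frac{1}{3^{5/3}2^{5/6}} = \frac{3^{c-2/3}}{4\pi}$, and the identities $\nu^{(2c-1)/2}t^{(2c-1)/6} = x^{(2c-1)/6}$ and $\nu t^{1/3} = x^{1/3}$ are in fact exact, so the cancellation with the left-hand side is immediate.
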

\begin{proof}
   Let $t = 108 + 54 \sqrt[3]{18} \,s \nu^{-\frac{2}{3}}$ in \eqref{M1-mainfor1} and recall the asymptotic formula for $U(t)$ in \eqref{M1Ut-asy}, we have after some computations
  \begin{eqnarray*}
    &&x^{\frac{2c-1}{6}} \exp\left( - \frac{\sqrt{\pi} \, \Gamma(\frac{1}{3})}{3\sqrt[3]{4} \, \Gamma(\frac{5}{6})} x^{\frac{1}{3}} \right) \mathcal{Q}_n(x) =   (-1)^n K_n {3^{c+1} \G({c+1 \over 3})\G^2({c+2 \over 3}) \over 2^{\frac{7}{6}} \pi }   \left( \frac{1}{3^{\frac{5}{3}} 2^{\frac{5}{6}} } + O(\nu^{-\frac{2}{3}}) \right) \nonumber \\
    &  & \qquad \times  \nu^{\frac{1}{6}} \left[ \Ai(s)  + O(\nu^{-\frac{2}{3}}) \right].
  \end{eqnarray*}
  This above formula proves our corollary.
\end{proof}
\begin{rmk}
  Again we can put the left hand side of \eqref{M1-airy} into its orthonormal form.
  Let $\widehat {\mathcal{Q}}_{n}(x)$ be the orthonormal Conrad--Flajolet polynomials I, then \eqref{M1-airy} can be rewritten as
  \begin{eqnarray}
    && x^{\frac{2c-1}{6}} \exp\left( - \frac{\sqrt{\pi} \, \Gamma(\frac{1}{3})}{3\sqrt[3]{4} \, \Gamma(\frac{5}{6})} x^{\frac{1}{3}} \right) \widehat {\mathcal{Q}}_n(x) =   { \sqrt{c+1} \; \G(c+1 ) \nu^{-\frac{4}{3}} \over 2^{\frac{1}{3}}   \; 3^{\frac{5}{3}}  }    \left[ \Ai(s)  + O(\nu^{-\frac{2}{3}}) \right], \nonumber \\
     && \hspace{11cm} \textrm{as $\nu \to \infty.$} \label{M1-hat-airy}
  \end{eqnarray}
  According to the above formula, again it seems reasonable to conjecture that at least one of the weight functions for the Conrad--Flajolet polynomials I should behave like
  \begin{equation*}
    x^{\frac{2c-1}{3}} \exp\left( - \frac{\sqrt{\pi} \sqrt[3]{2} \, \Gamma(\frac{1}{3})}{3 \, \Gamma(\frac{5}{6})} x^{\frac{1}{3}} \right) \qquad \textrm{as }x \to \infty.
  \end{equation*}
\end{rmk}
We have the following approximation.
\begin{cor} \label{M1-corol}
  Let $x_{n,k}$ be the zeros of $\mathcal{Q}_n(x)$ such that $x_{n,1}>x_{n,2}> \cdots>x_{n,n}$, and $\mathfrak{a}_k$ be the zeros of the Airy function $\Ai(x)$ in the descending order. Then we have for fixed $k$ and large $n$
  \begin{equation*}
    x_{n,k} = 108 \nu^3 + 54 \sqrt[3]{18}\; \mathfrak{a}_k \nu^{7/3} + O(\nu^{5/3}),
  \end{equation*}
  where $\nu = n + \frac{c+1}{3}$.
\end{cor}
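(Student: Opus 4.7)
The plan is to follow the same template used for the Chen--Ismail corollary and for the Berg--Valent corollary: combine the uniform Airy-type approximation in Corollary \ref{M1-cor1} with Hethcote's lemma (Lemma \ref{heth-lem}) to localize the zeros of $\mathcal{Q}_n(x)$ near the zeros $\mathfrak{a}_k$ of $\Ai$. First I would introduce the scaled variable $s$ through $x = 108\nu^3 + 54\sqrt[3]{18}\, s\, \nu^{7/3}$, so that \eqref{M1-airy} reads, up to a nonzero $n$-dependent prefactor $\kappa_n$,
\begin{equation*}
\kappa_n^{-1}\, x^{(2c-1)/6}\exp\!\Bigl(-\tfrac{\sqrt{\pi}\,\Gamma(\tfrac{1}{3})}{3\sqrt[3]{4}\,\Gamma(\tfrac{5}{6})}x^{1/3}\Bigr)\mathcal{Q}_n(x) = \Ai(s) + O(\nu^{-2/3}),
\end{equation*}
uniformly for $s$ in any compact interval of $\mathbb{R}$. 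Since the exponential factor and the prefactor are both nonzero, the zeros of $\mathcal{Q}_n(x)$ in the $x$-variable correspond exactly to zeros of the function $f_n(s) := \Ai(s) + \varepsilon_n(s)$ with $\|\varepsilon_n\|_{L^\infty(K)} = O(\nu^{-2/3})$ on every compact set $K$.

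Next, fix $k$ and let $\mathfrak{a}_k$ be the $k$-th largest zero of $\Ai$. Apply Lemma \ref{heth-lem} to $f_n = \Ai + \varepsilon_n$ on a small fixed interval $[\mathfrak{a}_k - \rho, \mathfrak{a}_k + \rho]$, choosing $\rho$ small enough that $\mathfrak{a}_k$ is the unique zero of $\Ai$ in this window and that $|\Ai'(s)| \ge m := |\Ai'(\mathfrak{a}_k)|/2 > 0$ throughout (possible because $\Ai'(\mathfrak{a}_k) \ne 0$ and $\Ai'$ is continuous). Since $E(s) = O(\nu^{-2/3})$ is eventually much smaller than $\min\{|\Ai(\mathfrak{a}_k-\rho)|,|\Ai(\mathfrak{a}_k+\rho)|\}$, Hethcote's lemma produces a zero $s_{n,k}$ of $f_n$ with $|s_{n,k} - \mathfrak{a}_k| \le E/m = O(\nu^{-2/3})$.

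Translating back to the $x$-variable via the scaling gives
\begin{equation*}
x_{n,k} = 108\nu^3 + 54\sqrt[3]{18}\, s_{n,k}\, \nu^{7/3} = 108\nu^3 + 54\sqrt[3]{18}\,\mathfrak{a}_k\,\nu^{7/3} + O(\nu^{7/3}\cdot\nu^{-2/3}),
\end{equation*}
which is exactly the stated error of order $O(\nu^{5/3})$. Finally, one must check that the zeros produced this way are in fact the $k$ largest zeros of $\mathcal{Q}_n(x)$ and occur in the right order. This follows because the intervals around distinct $\mathfrak{a}_1, \mathfrak{a}_2, \ldots$ are disjoint and can be chosen as above simultaneously for finitely many $k$; uniform convergence to $\Ai$ on compact sets together with the simplicity of Airy zeros shows that $f_n$ has no extra zeros in these intervals for large $n$, so the ordering of $\{s_{n,k}\}$ matches $\mathfrak{a}_1 > \mathfrak{a}_2 > \cdots$. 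There is no serious obstacle here; the only delicate point is to make sure Corollary \ref{M1-cor1} is applied on a bounded $s$-range, which is why we fix $k$ before letting $n \to \infty$.
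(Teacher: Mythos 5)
Your proposal is correct and follows exactly the route the paper takes: the paper's proof of this corollary is a one-line combination of Corollary \ref{M1-cor1} with Hethcote's Lemma \ref{heth-lem}, and you have simply spelled out the details of the rescaling, the application of the lemma near each $\mathfrak{a}_k$, and the translation of the $O(\nu^{-2/3})$ localization in $s$ into the $O(\nu^{5/3})$ error in $x$. Nothing further is needed.
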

\begin{proof}
  A combination of Corollary \ref{M1-cor1} and Lemma \ref{heth-lem} gives us the result.
\end{proof}

We also get the Bessel-type expansion in the interval containing $t_-= 0.$

\begin{thm} \label{M1-thm2}
  Let $\nu = n+\frac{c+1}{3}$. With $K_n$ and $U^*(t)$ defined in \eqref{M1kn-def}, \eqref{M1zeta2-def1} and \eqref{M1zeta2-def2}, respectively, we have
  \begin{eqnarray*}
    \mathcal{Q}_n(\nu^3 \, t) &\sim&  K_n {3^{c+1} \G({c+1 \over 3})\G^2({c+2 \over 3}) \; \nu^{1-c} \over 2^{5/3} \pi \; t^{\frac{2c-1}{6}} }   \exp\left( \frac{\sqrt{\pi} \, \Gamma(\frac{1}{3})}{3\sqrt[3]{4} \, \Gamma(\frac{5}{6})} \nu \, t^{\frac{1}{3}} \right)  \left(\frac{U^*(t)}{t(108-t) }\right)^{\frac{1}{4}} \nonumber \\
     && \hspace{-60pt} \times \biggl\{ \sin\left( \frac{\sqrt{\pi} \, \Gamma(\frac{1}{3})}{2^{\frac{2}{3}} \sqrt{3} \,\Gamma(\frac{5}{6})} \, x^{\frac{1}{3}} -  \frac{c}{3} \pi \right) \left[ J_{\frac{1}{3}}(\nu {U^*}^{\frac{1}{2}}(t)) \sum_{s=0}^\infty \frac{\tilde{A}_s^*(U^*)}{\nu^{s}} + J_{\frac{4}{3}}(\nu {U^*}^{\frac{1}{2}}(t)) \sum_{s=0}^\infty \frac{\tilde{B}_s^*(U^*)}{\nu^{s}} \right] \nonumber \\
      &&\hspace{-65pt} - \cos\left( \frac{\sqrt{\pi} \, \Gamma(\frac{1}{3})}{2^{\frac{2}{3}} \sqrt{3} \,\Gamma(\frac{5}{6})} \, x^{\frac{1}{3}} -  \frac{c}{3} \pi \right) \left[ Y_{\frac{1}{3}}(\nu {U^*}^{\frac{1}{2}}(t)) \sum_{s=0}^\infty \frac{\tilde{A}_s^*(U^*)}{\nu^{s}} + Y_{\frac{4}{3}}(\nu {U^*}^{\frac{1}{2}}(t)) \sum_{s=0}^\infty \frac{\tilde{B}_s^*(U^*)}{\nu^{s}} \right] \biggr\},
      \nonumber
  \end{eqnarray*}
  uniformly for $-\infty< t \leq M < 108$.
\end{thm}
\begin{proof}
When $0<t\leq M$, we have from \eqref{M1-Pn2} and \eqref{M1-Qn2}
\begin{eqnarray*}
  p_n(x) &=&  (-1)^n C_1^*(x) P^*_n(x) + (-1)^n C^*_2(x) Q^*_n(x)  \nonumber \\
  & \sim & (-1)^n 3 \sqrt{6} \, \nu^{\frac{1}{2}}\left(\frac{ U^*(t)}{t(108-t)}\right)^{\frac{1}{4}}  \biggl[  C_1^*(x) J_{\frac{1}{3}}(\nu {U^*}^{\frac{1}{2}}(t))  + C^*_2(x) W_{\frac{1}{3}}(\nu {U^*}^{\frac{1}{2}}(t)) \biggr]  \nonumber
\end{eqnarray*}
Note the following asymptotic expansions for Bessel functions, as $x \to \infty$
\begin{eqnarray}
  J_\nu(x) \sim \left(\frac{2}{ \pi x}\right)^{\frac{1}{2}}  \cos ( x- \frac{\nu \pi}{2} -\frac{\pi}{4}), \qquad
  Y_\nu(x) \sim \left(\frac{2}{ \pi x}\right)^{\frac{1}{2}}  \sin ( x- \frac{\nu \pi}{2} -\frac{\pi}{4}). \label{Bessl-asy}
\end{eqnarray}
As $p_n(x)$ is real, we choose $C_1^*(x) = \hat{C}_1(x) + i \hat{C}_2(x)$ and $C_2^*(x) = \hat{C}_2(x)$ such that
\begin{align}
   p_n(x) &\sim   (-1)^n 3 \sqrt{6} \, \nu^{\frac{1}{2}}\left(\frac{ U^*(t)}{t(108-t)}\right)^{\frac{1}{4}}  \biggl[  \hat{C}_1(x) J_{\frac{1}{3}}(\nu {U^*}^{\frac{1}{2}}(t))  + \hat{C}_2(x) Y_{\frac{1}{3}}(\nu {U^*}^{\frac{1}{2}}(t)) \biggr] \nonumber \\
   &\sim (-1)^n \frac{6\sqrt{3}}{\sqrt{\pi}} \left(\frac{ 1 }{t(108-t)}\right)^{\frac{1}{4}}
   \nonumber\\
   &~\times\biggl\{  \hat{C}_1(x)  \cos \biggl[ \nu \biggl(\int_0^t \frac{(t/s)^{\frac{1}{3}}}{\sqrt{s(108 - s)}} ds-\cos^{-1} \left( \frac{54-t}{54} \right) \biggr) -\frac{5}{12} \pi  \biggr] \nonumber  \\
   & \qquad  + \hat{C}_2(x) \sin \biggl[ \nu \biggl(\int_0^t \frac{(t/s)^{\frac{1}{3}}}{\sqrt{s(108 - s)}} ds-\cos^{-1} \left( \frac{54-t}{54} \right) \biggr)  -\frac{5}{12} \pi \biggr]\biggr\}. \label{M1-pn-proof}
\end{align}

Since
\begin{equation*}
  \int_{t}^{108} \frac{(t/s)^{1/3}}{\sqrt{108 s -s^2}} ds =   \frac{\sqrt{\pi} \, \Gamma(\frac{1}{3})}{2^{\frac{2}{3}} \sqrt{3} \,\Gamma(\frac{5}{6})} \, t^{\frac{1}{3}} - \int_0^t \frac{(t/s)^{1/3}}{\sqrt{108 s -s^2}} ds
\end{equation*}
and $2\theta = \pi - \cos^{-1} \left( \frac{54-t}{54} \right),$
we rewrite \eqref{M1-pn-asy} as
\begin{align*}
  p_n(x)&\sim {3^{c+1} \G({c+1 \over 3})\G^2({c+2 \over 3}) \over 2^{7/6} \pi^{3/2} (108t-t^2)^{1/4}} \frac{1}{x^{\frac{2c-1}{6}}} \exp\left( (\frac{x}{108})^{1/3} \rho \right)
  \nonumber\\&~
  \times \cos\{-{\pi\over4}+\nu \pi - \nu \cos^{-1} \left( \frac{54-t}{54} \right) - \nu \frac{\sqrt{\pi} \, \Gamma(\frac{1}{3})}{2^{\frac{2}{3}} \sqrt{3} \,\Gamma(\frac{5}{6})} \, t^{\frac{1}{3}} + \nu \int_{0}^{t} \frac{(t/s)^{1/3}}{\sqrt{108s-s^2}}ds \} \nonumber \\
  &= {3^{c+1} \G({c+1 \over 3})\G^2({c+2 \over 3}) \over 2^{7/6} \pi^{3/2} } \frac{ (-1)^n }{x^{\frac{2c-1}{6}} (108t - t^2)^{\frac{1}{4}}} \exp\left( (\frac{x}{108})^{1/3} \rho \right) \nonumber \\
   &\hspace*{-2em} \times \cos\left[  \nu \biggl(\int_0^t \frac{(t/s)^{\frac{1}{3}}}{\sqrt{s(108 - s)}} ds-\cos^{-1} \left( \frac{54-t}{54} \right) \biggr)  -\frac{5}{12} \pi  - \frac{\sqrt{\pi} \, \Gamma(\frac{1}{3})}{2^{\frac{2}{3}} \sqrt{3} \,\Gamma(\frac{5}{6})} \, x^{\frac{1}{3}}+ (\frac{c}{3} + \frac{1}{2}) \pi \right].
\end{align*}
Comparing \eqref{M1-pn-proof} and the above formula, we have
\begin{eqnarray*}
  \hat{C}_1(x) = {3^{c-1/2} \G({c+1 \over 3})\G^2({c+2 \over 3}) \over 2^{13/6} \pi \, x^{\frac{2c-1}{6}} }   \exp\left( x^{\frac{1}{3}}  \frac{\sqrt{\pi} \, \Gamma(\frac{1}{3})}{3\sqrt[3]{4} \, \Gamma(\frac{5}{6})} \right) \sin\left( \frac{\sqrt{\pi} \, \Gamma(\frac{1}{3})}{2^{\frac{2}{3}} \sqrt{3} \,\Gamma(\frac{5}{6})} \, x^{\frac{1}{3}} -  \frac{c}{3} \pi \right), \\
   \hat{C}_2(x) = - {3^{c-1/2} \G({c+1 \over 3})\G^2({c+2 \over 3}) \over 2^{13/6} \pi \, x^{\frac{2c-1}{6}} }   \exp\left( x^{\frac{1}{3}}  \frac{\sqrt{\pi} \, \Gamma(\frac{1}{3})}{3\sqrt[3]{4} \, \Gamma(\frac{5}{6})} \right) \cos\left( \frac{\sqrt{\pi} \, \Gamma(\frac{1}{3})}{2^{\frac{2}{3}} \sqrt{3} \,\Gamma(\frac{5}{6})} \, x^{\frac{1}{3}} -  \frac{c}{3} \pi \right).
\end{eqnarray*}
This completes the proof of the theorem.
\end{proof}


\section{Conrad--Flajolet polynomials II}

Again we use Theorem \ref{thmIsmLi} to get a bound for the largest zero for $\mathcal{Q}_n(x)$ as follows.

\begin{prop} \label{M2-prop-cs}
  Let $x_{n,k}$ be zeros of $\mathcal{Q}_n(x)$ such that $x_{n,1}>x_{n,2}> \cdots>x_{n,n}$, then we have the following bounds for all $n\geq 1$
  \begin{equation*}
    x_{n,1} < 108n^3 + 108cn^2.
  \end{equation*}
\end{prop}
\begin{proof}
  Recall the recurrence coefficients of $\mathcal{Q}_n(x)$ in \eqref{masterII} and choose $c_n =1/4$ in Theorem \ref{thmIsmLi}. Then the result follows.
\end{proof}

\subsection{Difference equation method}

As what we have done before, we introduce $\mathcal{Q}_n(x) = (-1)^n K_n p_n(x)$ with
\begin{equation} \label{M2kn-def}
  K_n:= \frac{\Gamma(\frac{n+\frac{c}{3}+\frac{2}{3}}{2}) \, \Gamma(\frac{n+\frac{c}{3}+1}{2})^2}{\Gamma(\frac{n+\frac{c}{3}+\frac{4}{3}}{2})^2 \, \Gamma(\frac{n+\frac{c}{3}+\frac{5}{3}}{2})}
\end{equation}
to arrive at the standard form. Note that
\begin{equation*}
  \frac{K_{n+1}}{K_{n-1}} = \frac{(3n+c-1)(3n+c)^2}{(3n+c+1)^2(3n+c+2)},
\end{equation*}
then the recurrence relation \eqref{bir&dea} with \eqref{masterII} reduces to the standard form \eqref{M1-3term-new}.
As $n \to \infty$, the recurrence coefficients satisfy the following expansions
\begin{eqnarray}
  A_n = \frac{K_n}{\lambda_n \cdot K_{n+1}}  \sim  n^{-\theta} \sum_{s=0}^\infty \frac{\alpha_s}{n^s}, \qquad
  B_n = -\frac{(\lambda_n + \mu_n)K_n}{\lambda_n \cdot K_{n+1}} \sim \sum_{s=0}^\infty \frac{\beta_s}{n^s} \label{M2ABn-asy}
\end{eqnarray}
with $\theta=3$,
\begin{eqnarray}
  \alpha_0 =  \frac{1}{27}, \qquad \quad \alpha_1 =  -\frac{2c+1}{54} \label{M2alpha-s}
\end{eqnarray}
and
\begin{eqnarray}
  \beta_0 =  -2, \qquad \beta_1 = 0, \qquad \beta_2 = \frac{5}{36}. \label{M2beta-s}
\end{eqnarray}
Since $\alpha_0$ and $\beta_0$ are the same as those in \eqref{M1alpha-s} and \eqref{M1beta-s}, we get the same transition points as follows
\begin{equation*}
  t_+ = 108, \qquad t_- = 0.
\end{equation*}
Again we get two different types of asymptotic expansions near $t_+$ and $t_-$. Due to the similarity between the current case and that in Sec. \ref{cf1-sec}, we get the same Airy-type expansion as that in Proposition \ref{cf1-prop1} except that in this case, $\nu= n+\frac{2c+1}{6}$.

Because the higher order coefficients in \eqref{M2alpha-s} and \eqref{M2beta-s} are different from those in \eqref{M1alpha-s} and \eqref{M1beta-s}, we have a corresponding difference in the order of Bessel functions. The Bessel-type expansion near $t_-$ is given as follows.

\begin{prop}
  When $n$ is large, $p_n(x)$ in \eqref{M1-3term-new} can be expressed as
  \begin{equation*}
  p_n(x) = C_1^*(x)(-1)^n P_n^*(x) + C_2^*(x) (-1)^n Q_n^*(x)
  \end{equation*}
  where $C_1^*(x)$ and $C_2^*(x)$ are two $n$-independent functions, $P_n^*(x)$ and $Q_n^*(x)$ are two linearly independent solutions of \eqref{M1-3term-new} satisfying the following Bessel-type asymptotic expansions in the neighbourhood of $t_-=0$
  \begin{equation*}
  P_n^*(\nu^3 t) \sim 3\sqrt{6}\; \nu^{\frac{1}{2}} \left(\frac{ U^*(t)}{t(108-t)}\right)^{\frac{1}{4}} \left[ J_{\frac{2}{3}}(\nu {U^*}^{\frac{1}{2}}(t)) \sum_{s=0}^\infty \frac{\tilde{A}_s^*(U^*)}{\nu^{s}} + J_{\frac{5}{3}}(\nu {U^*}^{\frac{1}{2}}(t)) \sum_{s=0}^\infty \frac{\tilde{B}_s^*(U^*)}{\nu^{s}} \right]
\end{equation*}
and
\begin{equation*}
  Q_n^*(\nu^3 t) \sim 3\sqrt{6}\; \nu^{\frac{1}{2}} \left(\frac{ U^*(t)}{t(108-t)}\right)^{\frac{1}{4}} \left[ W_{\frac{2}{3}}(\nu {U^*}^{\frac{1}{2}}(t)) \sum_{s=0}^\infty \frac{\tilde{A}_s^*(U^*)}{\nu^{s}} + W_{\frac{5}{3}}(\nu {U^*}^{\frac{1}{2}}(t)) \sum_{s=0}^\infty \frac{\tilde{B}_s^*(U^*)}{\nu^{s}} \right].
\end{equation*}
Here $\nu = n + \frac{2c+1}{6}$, $W_\alpha(x)$ is defined in \eqref{w-alpha-def}, the leading coefficients are given by
\begin{equation*}
  \tilde{A}_0^*(U^*) = 1, \qquad \tilde{B}_0^*(U^*) = 0.
\end{equation*}
and $U^*(t)$ is the same as defined in \eqref{M1zeta2-def1} and \eqref{M1zeta2-def2}.
\end{prop}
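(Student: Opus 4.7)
The plan is to invoke the main theorem of Cao and Li \cite{Cao:Li} in exactly the same manner as in the Conrad--Flajolet I case, but with the recurrence data \eqref{M2ABn-asy}--\eqref{M2beta-s} specific to \eqref{masterII}. Since the leading coefficients $\alpha_0 = 1/27$ and $\beta_0 = -2$ coincide with those for the Conrad--Flajolet I polynomials, the characteristic equation \eqref{chara-eqn} is unchanged, the transition points $t_\pm$ are unchanged, and in particular the Liouville--Green variable $U^*(t)$ determined from the phase of the Bessel-type WKB solution near $t_-=0$ is identical to \eqref{M1zeta2-def1}--\eqref{M1zeta2-def2}.

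First I would fix the scaling parameter $\nu$. The Cao--Li recipe prescribes $\tau_0 = -(\alpha_1 t_- + \beta_1)/((2-\beta_0)\theta)$ adjusted to cancel the first subleading term at $t_-=0$; combined with the fact that only $\beta_2$ contributes at the lower transition point (since $t_-=0$), the correct shift is $\tau_0 = (2c+1)/6$, giving $\nu = n + (2c+1)/6$ as claimed. Second, I would identify the Bessel indices. In the Cao--Li framework for $t_-=0$ the orders $(\alpha, \alpha+1)$ of the two Bessel functions in the expansion are read off from the constant term of the expansion of $B_n+2$ together with the shift $\tau_0$; for Conrad--Flajolet I this produced $(1/3, 4/3)$, while here $\beta_2 = 5/36$ (rather than $2/9$) together with the new $\alpha_1 = -(2c+1)/54$ yields $(2/3, 5/3)$. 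The remaining ingredients---the prefactor $3\sqrt{6}\,\nu^{1/2}[U^*(t)/(t(108-t))]^{1/4}$, the normalization $\tilde A^*_0 = 1$, $\tilde B^*_0 = 0$, and the pairing of $P_n^*$ with $J_\alpha$ and $Q_n^*$ with $W_\alpha = Y_\alpha - i J_\alpha$---are common to both cases.

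The main (and essentially only) point requiring care is the computation of the Bessel order. One has to trace through the Cao--Li construction carefully enough to confirm that the pair of indices shifts from $(1/3,4/3)$ in the M1 case to $(2/3,5/3)$ in the M2 case, and that this shift is consistent with the new value of $\tau_0$. Once this is in hand, writing $p_n(x) = C_1^*(x)(-1)^n P_n^*(x) + C_2^*(x)(-1)^n Q_n^*(x)$ as a general linear combination of the two independent solutions furnished by \cite{Cao:Li} completes the proof; the coefficients $C_1^*(x), C_2^*(x)$ are left undetermined at this stage and will be fixed in the next subsection by matching against an exponential-region formula obtained from Lemma \ref{lemma-birth}, exactly as was done for the M1 polynomials.
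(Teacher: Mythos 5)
Your proposal is correct and follows essentially the same route as the paper, which simply invokes the main theorem of Cao and Li with the recurrence data \eqref{M2ABn-asy}--\eqref{M2beta-s}; your additional discussion of how $\tau_0=(2c+1)/6$ and the Bessel orders $(2/3,5/3)$ emerge from the subleading coefficients only makes explicit what the paper leaves to the citation. (Minor quibble: the shift $\tau_0$ is computed from the formula involving the \emph{large} transition point $t_+$, not $t_-$, but the value you state is the correct one.)
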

\begin{proof}
  With the asymptotic expansions for $A_n$ and $B_n$ in \eqref{M2ABn-asy}, our results follow from the main Theorem in \cite{Cao:Li}.
\end{proof}


\subsection{Determination of $C_1(x)$ and $C_2(x)$}

According to Lemma \ref{lemma-birth}, we have the following result.
\begin{prop}
  Let $x=x_n:=(n+(2c+1)/6)^3t$ with $t\in\mathbb{C}\cut[0,108]$, we have as $n\to\infty$,
  \begin{align*}
    p_n(x_n)&\sim {\G((c+1)/3)^2\G((c+2)/3)t^n\over3^{3n}n^{c+1/2}(2\pi/n)^{3/2} 2^{5/6} }(1-108/t)^{-1/4}\left[{1+\sqrt{1-108/t}\over 2}\right]^{2n+1+{(2c-2)\over 3}}
  \nonumber\\&\times\exp\{\int_0^1{3n+c+1/2\over\sqrt{1-108s^3/t}}ds\}.
  \end{align*}
\end{prop}
\begin{proof}
Recall the rates in \eqref{masterII} and $K_n$ defined in \eqref{M2kn-def}. By Stirling's formula, we have $K_n\sim 2^{5/6} n^{-5/6}$ and
  \begin{align*}
    \prod_{k=0}^{n-1}\la_k&={3^{3n}\G((c+1)/3+n)^2\G((c+2)/3+n)\over\G((c+1)/3)^2\G((c+2)/3)}
    \\&\sim{3^{3n}n^{c+4/3}\G(n)^3\over\G((c+1)/3)^2\G((c+2)/3)}
    \\&\sim{3^{3n}n^{c+4/3}(2\pi/n)^{3/2}(n/e)^{3n}\over\G((c+1)/3)^2\G((c+2)/3)}.
  \end{align*}
  Then from \eqref{qn&pin}, we have
  $$\pi_n(x)=K_n\prod_{k=0}^{n-1}\la_kp_n(x)\sim{3^{3n}n^{c+1/2}(2\pi/n)^{3/2}(n/e)^{3n}2^{5/6}\over\G((c+1)/3)^2\G((c+2)/3)}p_n(x).$$
  Because $\pi_n(x)$ satisfies \eqref{pi-outer-general} with $p=3$, $b=27$, $u=c+4/3$ and $v=c-1/3$, we get our proposition from the above formula.
\end{proof}
Again, based on the above proposition, we obtain the following result in the oscillatory region.
\begin{prop}
  Let $x=x_n:=(n+(2c+1)/6)^3t$ with $t=108\cos^2\theta,\theta \in [\delta,\frac{\pi}{2}-\delta]$, we have as $n\to\infty$,
\begin{align} \label{M2-pn-asy}
    p_n(x_n)&\sim {3^{c+1/2} \G^2({c+1 \over 3})\G({c+2 \over 3}) \over 2^{4/3} \pi^{3/2} (108t-t^2)^{1/4}} \frac{1}{(n^3t)^{\frac{c-1}{3}}} \exp\left( (n+{2c+1\over 6}) (\frac{t}{108})^{1/3} \rho \right)
  \nonumber\\&
  \times \cos\{{\pi\over4}-(2n+{(2c+1)\over 3})\theta + (n+{ 2c+1\over 6 }) \int_{t}^{108} \frac{(t/s)^{1/3}}{\sqrt{108s-s^2}} ds \},
\end{align}
where $\rho$ is defined in \eqref{rho-M1}.
\end{prop}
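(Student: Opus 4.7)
The approach is to start from the external asymptotic formula in the preceding proposition and push $t$ onto the cut $[0,108]$ from both sides, in exact parallel with the argument already used for the Conrad--Flajolet~I polynomials (proof of \eqref{M1-pn-asy}) and originating in \cite[Theorem 3.2]{xswang2012}.

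Concretely, I would write $t=108\cos^2\theta$ with $\theta\in[\delta,\pi/2-\delta]$ and evaluate the preceding formula at $t\pm i\varepsilon$, letting $\varepsilon\to 0^+$. Because $1-108/t=-\tan^2\theta$ on the cut, we get $\sqrt{1-108/t}\to\pm i\tan\theta$, so
$$\left[\tfrac{1}{2}\bigl(1+\sqrt{1-108/t}\bigr)\right]^{2n+(2c+1)/3}\to (2\cos\theta)^{-2n-(2c+1)/3}\,e^{\pm i(2n+(2c+1)/3)\theta},$$
while $(1-108/t)^{-1/4}\to e^{\mp i\pi/4}\cot^{1/2}\theta$. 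The integral $\int_0^1(3n+c+1/2)/\sqrt{1-108 s^3/t}\,ds$ splits at the branch point $s_0=(t/108)^{1/3}=\cos^{2/3}\theta$: the part over $[0,s_0]$ is real and, using the definition of $\rho$ in \eqref{rho-M1}, produces the exponential factor $\exp\bigl((n+(2c+1)/6)(t/108)^{1/3}\rho\bigr)$; the part over $[s_0,1]$ is purely imaginary on each side of the cut, and a substitution $u=(t/108)/s^3$ (or equivalently $s\mapsto(t/s)^{1/3}$) identifies it as $\pm i(n+(2c+1)/6)\int_t^{108}(t/s)^{1/3}/\sqrt{108s-s^2}\,ds$.

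Since $p_n(x)$ has real coefficients, its value at real $x_n$ equals the real part of either boundary limit. Adding the two conjugate boundary expansions converts the $e^{\pm i(\cdots)}$ factors into the desired cosine, whose phase is
$$\tfrac{\pi}{4}-\bigl(2n+\tfrac{2c+1}{3}\bigr)\theta+\bigl(n+\tfrac{2c+1}{6}\bigr)\int_t^{108}\frac{(t/s)^{1/3}}{\sqrt{108s-s^2}}\,ds.$$
The algebraic prefactor is then reassembled by inserting $t^n=108^n\cos^{2n}\theta$, absorbing $(2\cos\theta)^{-2n}$ to form $27^n$ which cancels against $3^{-3n}$, and repackaging the residual $n^{-c-1/2}\cos^{-(2c+1)/3}\theta$-factor as $1/(n^3t)^{(c-1)/3}$, together with the $\pi^{3/2}$, $2^{4/3}$ and $\Gamma$-constants inherited from the preceding proposition.

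The only obstacles are technical rather than structural: keeping the square-root branches consistent on the two sides of the cut so that the imaginary parts combine with the correct sign, and carefully tracking the numerous $2$, $3$, $n$ and $\Gamma$ factors through the rearrangement. Because $\alpha_0$ and $\beta_0$ coincide with those of Conrad--Flajolet~I and only $\tau_0$ and the subleading coefficients $\alpha_1,\beta_2$ change, the argument is essentially word-for-word that of Section~\ref{cf1-sec}, with the single substitution $\nu=n+(2c+1)/6$; no new analytic input beyond the external formula in the preceding proposition is needed.
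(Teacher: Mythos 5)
Your proposal is correct and is essentially the paper's own (unwritten) argument: the paper simply invokes the boundary-value matching of \cite[Theorem 3.2]{xswang2012} applied to the preceding exterior-region proposition, which is exactly the $t\pm i\varepsilon$ limiting procedure you carry out. Your bookkeeping checks out — in particular $t^n(2\cos\theta)^{-2n}=3^{3n}$ cancels the $3^{-3n}$, the real part of the integral over $[0,s_0]$ yields $\nu(t/108)^{1/3}\rho$ via $\rho=3\int_0^1(1-w^3)^{-1/2}dw$, the substitution $s\mapsto(t/s)^{1/3}$ identifies the imaginary part with $\nu\int_t^{108}(t/s)^{1/3}(108s-s^2)^{-1/2}ds$, and the residual constants recombine to $3^{c+1/2}2^{-4/3}\pi^{-3/2}(108t-t^2)^{-1/4}(n^3t)^{-(c-1)/3}$.
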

Then the Airy-type expansion can be obtained.
\begin{thm}
  Let $\nu = n+\frac{2c+1}{6}$. With $K_n$ and $U(t)$ defined in \eqref{M2kn-def}, \eqref{M1zeta-def1} and \eqref{M1zeta-def2}, respectively, we have
  \begin{eqnarray*}
    \mathcal{Q}_n(\nu^3 \, t) &\sim&  (-1)^n K_n {3^{c+\frac{1}{2}} \G^2({c+1 \over 3})\G({c+2 \over 3}) \over 2^{\frac{4}{3}} \pi \, \nu^{c-1} \, t^{\frac{c-1}{3}}}  \exp\left(  \frac{\sqrt{\pi} \, \Gamma(\frac{1}{3})}{3\sqrt[3]{4} \, \Gamma(\frac{5}{6})} \nu t^{\frac{1}{3}} \right) \left(\frac{U(t)}{t(t-108) }\right)^{\frac{1}{4}} \nonumber \\
     && \times \left[ \Ai(\nu^{\frac{2}{3}} U(t)) \sum_{s=0}^\infty \frac{\tilde{A}_s(U)}{\nu^{s-\frac{1}{6}}} + \Ai'(\nu^{\frac{2}{3}} U(t)) \sum_{s=0}^\infty \frac{\tilde{B}_s(U)}{\nu^{s+\frac{1}{6}}} \right],
  \end{eqnarray*}
  uniformly for $0<\delta \leq t < \infty$.
\end{thm}

\begin{proof}

When $t<108$, recall the asymptotic expansion for Airy function in \eqref{Airy-asy1} and \eqref{Airy-asy2} again, we have from \eqref{M1-Pn} and \eqref{M1-Qn}
\begin{align*}
  p_n(x) &=  C_1(x) P_n(x) + C_2(x) Q_n(x) \sim C_1(x)  \frac{3\sqrt{6} }{[t (108-t)]^{\frac{1}{4}} }
   \frac{1}{\sqrt{\pi} }  \cos \biggl( \frac{2\nu}{3} (-U(t))^{\frac{3}{2}} -\frac{\pi}{4}\biggr) \nonumber \\
   & \quad - C_2(x)  \frac{3\sqrt{6}}{[t (108-t)]^{\frac{1}{4}} } \frac{1}{\sqrt{\pi} }  \sin \biggl( \frac{2\nu}{3} (-U(t))^{\frac{3}{2}} -\frac{\pi}{4} \biggr) \nonumber \\
   &= C_1(x)  \frac{3\sqrt{6} }{[t (108-t)]^{\frac{1}{4}} }
   \frac{1}{\sqrt{\pi} }  \cos \biggl( \nu( \cos^{-1} \left( \frac{t-54}{54} \right) - \int_t^{108} \frac{(t/s)^{\frac{1}{3}}}{\sqrt{s(108-s)}} ds)  -\frac{\pi}{4}\biggr) \nonumber \\
   &  \quad - C_2(x) \frac{3 \sqrt{6} }{[t (108-t)]^{\frac{1}{4}} } \frac{1}{\sqrt{\pi} }   \sin \biggl( \nu( \cos^{-1} \left( \frac{t-54}{54} \right) - \int_t^{108} \frac{(t/s)^{\frac{1}{3}}}{\sqrt{s(108-s)}} ds)  -\frac{\pi}{4}\biggr). \nonumber
\end{align*}
Comparing the above formula and \eqref{M2-pn-asy}, we have
\begin{equation*}
  C_1(x) = {3^{c-1} \G^2((c+1)/3)\G((c+2)/3) \over 2^{5/6} \pi \, x^{\frac{c-1}{3}}}  \exp\left( (\frac{x}{108})^{\frac{1}{3}} \frac{\sqrt{\pi} \, \Gamma(\frac{1}{3})}{\Gamma(\frac{5}{6})}  \right),  \qquad C_2(x) = 0.
\end{equation*}
This completes the proof of the theorem.
\end{proof}

\begin{rmk}
  Again, we can obtain the limiting zero distribution for $\mathcal{Q}_n(108 n^3 x)$ from the recurrence relation \eqref{masterII}. It is the same as that in \eqref{masterI-measure}.
\end{rmk}

Since the above theorem is uniformly valid in the neighbourhood of the large transition point $108$, we have the following asymptotic formula for $\mathcal{Q}_n(x)$.

\begin{cor} \label{M2-cor1}
  Let $\nu = n+\frac{2c+1}{6}$ and $K_n$ be given in \eqref{M2kn-def}. Uniformly for a bounded real number $s$, we have, for $x = 108 \nu^3 + 54 \sqrt[3]{18} \,s  \,\nu^{\frac{7}{3}}$
  \begin{eqnarray}
    && x^{\frac{c-1}{3}} \exp\left(  -\frac{\sqrt{\pi} \, \Gamma(\frac{1}{3})}{3\sqrt[3]{4} \, \Gamma(\frac{5}{6})} x^{\frac{1}{3}} \right) \mathcal{Q}_n(x) =  (-1)^n K_n {3^{c-\frac{7}{6} } \G^2({c+1 \over 3})\G({c+2 \over 3}) \nu^{\frac{1}{6}} \over 4 \sqrt[6]{2}  \; \pi }    \left[ \Ai(s)  + O(\nu^{-\frac{2}{3}}) \right], \nonumber \\
     && \hspace{12cm} \textrm{as $\nu \to \infty.$} \label{M2-airy}
  \end{eqnarray}
\end{cor}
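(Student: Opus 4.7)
The plan is to deduce this corollary directly from the Airy-type uniform expansion stated in the preceding theorem, following the same pattern already used to derive Corollary \ref{M1-cor1} from its corresponding Airy-type expansion. Since the theorem just proved is uniformly valid for $t$ in compact subsets of $(0,\infty)$, in particular in a neighbourhood of the large transition point $t_+ = 108$, we may substitute any $t = 108 + O(\nu^{-2/3})$ and still obtain a valid asymptotic formula.

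Concretely, I would set $t = 108 + 54\sqrt[3]{18}\, s\, \nu^{-2/3}$, so that $\nu^3 t = 108\nu^3 + 54\sqrt[3]{18}\, s\, \nu^{7/3} = x$. Using the local expansion for $U(t)$ near $t=108$, which takes the same form as \eqref{M1Ut-asy} since the transition point and characteristic equation are identical to the Conrad--Flajolet I case, we get $\nu^{2/3} U(t) = s + O(\nu^{-2/3})$. Consequently $\Ai(\nu^{2/3} U(t)) = \Ai(s) + O(\nu^{-2/3})$ and the $\Ai'$ term is dominated by a factor of $\nu^{-1/3}$. Next I would evaluate the prefactor $\bigl(U(t)/[t(t-108)]\bigr)^{1/4}$ at this value of $t$: the numerator contributes $\bigl((54\sqrt[3]{18})^{-1} + O(\nu^{-2/3})\bigr)^{1/4} \nu^{-1/6}$ and the denominator contributes $\bigl(108 \cdot 54\sqrt[3]{18}\, \nu^{-2/3}\bigr)^{1/4}$, so after simplification the whole prefactor becomes a power of two and three times $\nu^{-1/6}$, which combines with the $\nu^{1/6}$ from the $\Ai$ series to produce the $\nu^{1/6}$ appearing in \eqref{M2-airy}.

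Finally, rearranging the factors $x^{(c-1)/3}$ and $\exp(\cdot)$ to the left-hand side absorbs the $t^{(c-1)/3}$ in the denominator and the exponential in the theorem, leaving the desired identity with explicit constant $3^{c-7/6}\Gamma^2((c+1)/3)\Gamma((c+2)/3)/(4\sqrt[6]{2}\,\pi)$. The only step requiring a bit of care is bookkeeping of the numerical constants when combining the theorem's prefactor $3^{c+1/2}\Gamma^2((c+1)/3)\Gamma((c+2)/3)/(2^{4/3}\pi)$ with the $(54\sqrt[3]{18})^{-1/4}(108 \cdot 54\sqrt[3]{18})^{-1/4}$ coming from $U(t)$ and the denominator; this is the main (routine) obstacle, but it is a direct analogue of the constant calculation already carried out in the proof of Corollary \ref{M1-cor1}, so no new ideas are needed.
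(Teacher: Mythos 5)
Your proposal is correct and follows exactly the paper's own route: the paper proves Corollary \ref{M2-cor1} by the same substitution $t = 108 + 54\sqrt[3]{18}\,s\,\nu^{-2/3}$ into the uniform Airy-type expansion, invoking the local behavior of $U(t)$ at $t=108$, just as in the proof of Corollary \ref{M1-cor1} to which it defers. Your constant bookkeeping also checks out, since $\frac{3^{c+1/2}}{2^{4/3}\pi}\bigl(108\cdot 54\sqrt[3]{18}\bigr)^{-1/4} = \frac{3^{c-7/6}}{2^{13/6}\pi} = \frac{3^{c-7/6}}{4\sqrt[6]{2}\,\pi}$.
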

\begin{proof}
  The proof is very similar to that of Corollary \ref{M1-cor1}.
\end{proof}
\begin{rmk}
  Again we can put the left hand side of \eqref{M2-airy} into its orthonormal form.
  Let $\widehat {\mathcal{Q}}_{n}(x)$ be the orthonormal Conrad--Flajolet polynomials II, then \eqref{M2-airy} can be rewritten as
  \begin{eqnarray}
    x^{\frac{c-1}{3}} \exp\left(  -\frac{\sqrt{\pi} \, \Gamma(\frac{1}{3})}{3\sqrt[3]{4} \, \Gamma(\frac{5}{6})} x^{\frac{1}{3}} \right) \widehat {\mathcal{Q}}_n(x) =  {\G(c+1 ) \nu^{-\frac{4}{3}} \over 2^{\frac{1}{3}}   \; 3^{\frac{5}{3}}  }    \left[ \Ai(s)  + O(\nu^{-\frac{2}{3}}) \right], \ \textrm{as $\nu \to \infty.$} \label{M2-hat-airy}
  \end{eqnarray}
  According to the above formula, again it seems reasonable to conjecture that at least one of the weight functions for the Conrad--Flajolet polynomials II should behave like
  \begin{equation*}
    x^{\frac{2(c-1)}{3}} \exp\left(  -\frac{\sqrt{\pi} \sqrt[3]{2} \, \Gamma(\frac{1}{3})}{3 \, \Gamma(\frac{5}{6})} x^{\frac{1}{3}} \right) \qquad \textrm{as }x \to \infty.
  \end{equation*}
\end{rmk}

Since \eqref{M2-airy} is very similar to \eqref{M1-airy} except for some constants on the right hand side and powers of $x$ changed on the left hand side, we have the same approximation for the zeros as in Corollary \ref{M1-corol}, that is,
  \begin{equation*}
    x_{n,k} = 108 \nu^3 + 54 \sqrt[3]{18}\; \mathfrak{a}_k \nu^{7/3} + O(\nu^{5/3}),
  \end{equation*}
  where $\nu = n + \frac{2c+1}{6}$.

The Bessel-type expansion is also obtained as follows.
\begin{thm}\label{M2-thm2}
  Let $\nu = n+\frac{2c+1}{6}$. With $K_n$ and $U^*(t)$ defined in \eqref{M2kn-def}, \eqref{M1zeta2-def1} and \eqref{M1zeta2-def2}, respectively, we have
  \begin{align*}
    \mathcal{Q}_n(\nu^3 \, t) &\sim  K_n {3^{c+1/2} \G^2({c+1 \over 3})\G({c+2 \over 3}) \; \nu^{\frac{3}{2}-c} \over 2^{11/6} \pi \; t^{\frac{c-1}{3}} }   \exp\left( \frac{\sqrt{\pi} \, \Gamma(\frac{1}{3})}{3\sqrt[3]{4} \, \Gamma(\frac{5}{6})} \nu \, t^{\frac{1}{3}} \right)  \left(\frac{U^*(t)}{t(108-t) }\right)^{\frac{1}{4}} \nonumber \\
     & \hspace{-60pt} \times \biggl\{ \sin\left( \frac{\sqrt{\pi} \, \Gamma(\frac{1}{3})}{2^{\frac{2}{3}} \sqrt{3} \,\Gamma(\frac{5}{6})} \, x^{\frac{1}{3}} -  \frac{c}{3} \pi \right) \left[ J_{\frac{2}{3}}(\nu {U^*}^{\frac{1}{2}}(t)) \sum_{s=0}^\infty \frac{\tilde{A}_s^*(U^*)}{\nu^{s}} + J_{\frac{5}{3}}(\nu {U^*}^{\frac{1}{2}}(t)) \sum_{s=0}^\infty \frac{\tilde{B}_s^*(U^*)}{\nu^{s}} \right] \nonumber \\
      &\hspace{-60pt} - \cos\left( \frac{\sqrt{\pi} \, \Gamma(\frac{1}{3})}{2^{\frac{2}{3}} \sqrt{3} \,\Gamma(\frac{5}{6})} \, x^{\frac{1}{3}} -  \frac{c}{3} \pi \right) \left[ Y_{\frac{2}{3}}(\nu {U^*}^{\frac{1}{2}}(t)) \sum_{s=0}^\infty \frac{\tilde{A}_s^*(U^*)}{\nu^{s}} + Y_{\frac{5}{3}}(\nu {U^*}^{\frac{1}{2}}(t)) \sum_{s=0}^\infty \frac{\tilde{B}_s^*(U^*)}{\nu^{s}} \right] \biggr\},
      \nonumber
  \end{align*}
  uniformly for $-\infty< t \leq M < 108$.
\end{thm}
\begin{proof}
  The proof is similar to that of Theorem \ref{M1-thm2}.
\end{proof}

\section{Remarks on the Moment Problem}
The general solution of an indeterminate moment problem is described in terms of
four entire functions   $A(z), B(z), C(z), D(z)$ such that
\begin{equation*}
A(z)  D(z) -  B(z) C(z)  =1
\end{equation*}
 The functions are called the Nevanlinna parametrization; see \cite{Akh, Sho:Tam}. It is known that these functions have real and simple zeros. The following theorem, see \cite[Thm. 21.1.2]{Ismbook}, describes all solutions to an indeterminate moment problem.
\begin{thm}
\label{thm21.1.2}
Let $\mathcal{N}$ denote the class of functions $\{\sigma\}$, which are analytic in the open upper half plane and map it into
the lower half plane, and satisfy $\sigma(\overline{z})=\overline{\sigma(z)}$. Then the formula
\begin{equation}
\label{eq21.1.11}
\int_{\mathbb R}\frac{d\mu(t;\sigma)}{z-t}
=\frac{A(z)-\sigma(z)\,C(z)}{B(z)-\sigma(z)\,D(z)},\quad z\notin\mathbb{R}
\end{equation}
establishes a one-to-one correspondence between the solutions $\mu$ of the moment problem and functions
$\sigma$ in the class $\mathcal{N}$,  augmented by the constant $\infty$.
\end{thm}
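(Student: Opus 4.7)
The plan is to follow Nevanlinna's classical approach, parametrizing the solutions of the indeterminate Hamburger moment problem via the Weyl disk construction; since this is a well--known result from \cite[Thm.~21.1.2]{Ismbook}, I will only sketch the main steps. First I would fix the orthonormal polynomials $\{p_n(z)\}$ associated with the moment sequence, and the polynomials of the second kind $q_n(z):=\int_{\mbb R}[p_n(z)-p_n(t)]/(z-t)\,d\mu(t)$ (independent of which solution $\mu$ is chosen). Indeterminacy is equivalent to the convergence $\sum_n |p_n(0)|^2<\infty$ and $\sum_n |q_n(0)|^2<\infty$, which ensures that the series defining $A,B,C,D$ as bilinear combinations of $p_n(0),p_n(z),q_n(0),q_n(z)$ converge uniformly on compact subsets of $\mbb C$ and yield entire functions. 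The relation $A(z)D(z)-B(z)C(z)=1$ is then a telescoping consequence of the Christoffel--Darboux formula.

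Next, for each $n$ I would introduce the M\"obius transformation
\begin{equation*}
w\;\longmapsto\;\frac{A_n(z)-w\,C_n(z)}{B_n(z)-w\,D_n(z)}
\end{equation*}
associated with the $n$-th partial sums of the series above, and define the Weyl disk $K_n(z)$ as the image of the closed upper half-plane (including the point at infinity) under this map. A short computation based on the Christoffel--Darboux identity shows $K_{n+1}(z)\subset K_n(z)$, so in the indeterminate case $K_\infty(z):=\bigcap_n K_n(z)$ is a non-degenerate closed disk. One then verifies that the Stieltjes transform $S_\mu(z):=\int d\mu(t)/(z-t)$ of every solution $\mu$ of the moment problem lies in $K_\infty(z)$ for every $z$ in the open upper half-plane.

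To establish the bijection, I would check that the right-hand side of \eqref{eq21.1.11} is a Pick function of $z$ with the correct normalization $\sim 1/z$ at infinity precisely when $\sigma\in\mathcal{N}\cup\{\infty\}$. One direction uses $AD-BC=1$: a direct computation shows $\Im[(A-\sigma C)/(B-\sigma D)]>0$ in the upper half-plane whenever $\Im\sigma\le 0$ there, combined with the symmetry $\sigma(\overline z)=\overline{\sigma(z)}$ which guarantees the Stieltjes transform takes conjugate values at conjugate points. Conversely, given a solution $\mu$, one inverts \eqref{eq21.1.11} to get $\sigma=(A-S_\mu B)/(C-S_\mu D)$ and again applies $AD-BC=1$ to verify $\sigma\in\mathcal{N}\cup\{\infty\}$. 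The Stieltjes inversion formula then returns $\mu$ uniquely from $S_\mu$, closing the correspondence.

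The main obstacle is the fine geometric analysis of the Weyl disks, especially the verification that the nested intersection $K_\infty(z)$ is a genuine disk (rather than collapsing to a point) in the indeterminate case and that every interior and boundary point of $K_\infty(z)$ arises as a Stieltjes transform of a bona fide positive probability measure---not merely a signed or complex one. This ultimately rests on the Herglotz--Nevanlinna representation theorem, combined with careful control of the asymptotic behavior of the Nevanlinna matrix $(A,B,C,D)$ at infinity to guarantee the correct total mass and to identify the extreme (N--extremal) measures with the real values of $\sigma$.
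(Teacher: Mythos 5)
This theorem is not proved in the paper at all: it is quoted verbatim as background material in Section 6, with an explicit citation to \cite[Thm.~21.1.2]{Ismbook}, so there is no in-paper argument to compare yours against. Your sketch is the standard Nevanlinna proof (second-kind polynomials, the Weyl-disk nesting via Christoffel--Darboux, the M\"obius parametrization of the limit disk, and Stieltjes inversion), and it is correct in outline; the two points worth flagging are (i) a sign slip --- with the kernel $1/(z-t)$ the Stieltjes transform maps the upper half-plane into the \emph{lower} half-plane, matching the definition of $\mathcal{N}$ in the statement, so the inequality you want is $\Im\bigl[(A-\sigma C)/(B-\sigma D)\bigr]<0$ for $\Im z>0$, not $>0$ --- and (ii) the surjectivity step (that every $\sigma\in\mathcal{N}\cup\{\infty\}$ actually produces a positive measure with the prescribed moments) is the genuinely hard part of the classical proof and is only gestured at in your last paragraph, which is acceptable for a sketch of a cited textbook result but would need to be filled in for a self-contained proof.
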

A theorem of Herglotz, \cite[Lemma 2.1]{Sho:Tam} asserts that any $\sigma$ in the above theorem has the representation
\begin{eqnarray}
\label{eqHer}
\sigma(z) = - c_1z + c_2 + \int_{\mathbb{R}} \frac{1+tz}{z-t} d\alpha(t),
\end{eqnarray}
where $\alpha$ is a finite positive measure on $\mathbb{R}$, $c_1$ and $c_2$ are real constants, and
$c_1 \ge 0$.

The next theorem is in   \cite{Ber:Chr} and
\cite{Ism:Mas}.
\begin{thm}\label{thm21.1.4}
Let $\sigma$ in \eqref{eq21.1.11} be analytic in $\im z>0$, and assume $\sigma$ maps $\im z>0$ into $\im\sigma(z)<0$. If
$\mu(x,\sigma)$  does not have a jump at $x$ and $\sigma(x\pm i0)$ exist then
\begin{equation}
\label{eq21.1.12}
\frac{d\mu(x;\sigma)}{dx}=\frac{\sigma(x-i0^+)-\sigma(x+i0^+)}
{2\pi i\;\left| B(x)-\sigma(x-i0^+)D(x)\right|^2}.
\end{equation}
\end{thm}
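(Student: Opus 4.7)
The plan is to derive the formula from the classical Stieltjes--Perron inversion applied to the Stieltjes transform representation \eqref{eq21.1.11}, using the Nevanlinna identity $AD-BC=1$ and the reality of $A,B,C,D$ on $\mathbb{R}$ together with the reflection symmetry of $\sigma$.

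First, I would write $F(z):=\int_{\mathbb{R}}\frac{d\mu(t;\sigma)}{z-t}$ and recall that with this sign convention the Plemelj--Sokhotski formula gives, at any continuity point $x$ of $\mu$ where both boundary values $F(x\pm i0^+)$ exist,
\begin{equation*}
\frac{d\mu(x;\sigma)}{dx}=\frac{F(x-i0^+)-F(x+i0^+)}{2\pi i}.
\end{equation*}
The hypothesis that $\mu$ has no jump at $x$, together with the assumed existence of $\sigma(x\pm i0^+)$, guarantees that this inversion applies and that the boundary values of $F$ can be computed by substituting the boundary values of $\sigma$ into \eqref{eq21.1.11}.

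Next I would perform the algebraic reduction. Since the Nevanlinna functions $A,B,C,D$ are entire and real-valued on $\mathbb{R}$, the only contribution to the jump of $F$ comes from $\sigma$. Writing $\sigma_\pm:=\sigma(x\pm i0^+)$, a direct computation gives
\begin{equation*}
F(x-i0^+)-F(x+i0^+)=\frac{[A-\sigma_-C][B-\sigma_+D]-[A-\sigma_+C][B-\sigma_-D]}{[B-\sigma_-D][B-\sigma_+D]}.
\end{equation*}
Expanding the numerator, the pure $AB$ and $\sigma_+\sigma_- CD$ cross-terms cancel, leaving $(\sigma_--\sigma_+)(AD-BC)=\sigma_--\sigma_+$ by the Nevanlinna relation $AD-BC=1$.

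For the denominator, the symmetry $\sigma(\bar z)=\overline{\sigma(z)}$ (built into the class $\mathcal{N}$) yields $\sigma_+=\overline{\sigma_-}$, and since $B(x),D(x)\in\mathbb{R}$ the two factors $B-\sigma_\pm D$ are complex conjugates of each other. Hence
\begin{equation*}
[B(x)-\sigma_-D(x)]\,[B(x)-\sigma_+D(x)]=\bigl|B(x)-\sigma(x-i0^+)D(x)\bigr|^2,
\end{equation*}
and dividing through by $2\pi i$ gives \eqref{eq21.1.12}. The only step that requires care is justifying that the boundary values of $F$ are correctly obtained by substituting the boundary values of $\sigma$; this is where the hypotheses (no jump of $\mu$ at $x$, and existence of $\sigma(x\pm i0^+)$) are used, and where one must check that $B(x)-\sigma_\pm D(x)\ne 0$ so that the quotient is well-defined at $x$. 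I expect this to be the main technical point, with the rest being bookkeeping.
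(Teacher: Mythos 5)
Your proof is correct. Note that the paper itself gives no proof of this theorem; it is quoted as a known result with references to Berg--Christensen and Ismail--Masson, and your argument --- Stieltjes--Perron inversion for $F(z)=\int d\mu(t)/(z-t)$, cancellation of the cross terms via $AD-BC=1$, and the conjugate-pair identification of the denominator using $\sigma(\bar z)=\overline{\sigma(z)}$ and the reality of $B,D$ on $\mathbb{R}$ --- is precisely the standard derivation found in those sources. You also correctly isolate the only genuinely technical points: that the hypotheses (no jump of $\mu$ at $x$, existence of $\sigma(x\pm i0^+)$, and nonvanishing of $B(x)-\sigma_\pm D(x)$) are what license passing to boundary values inside the quotient.
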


 A consequence of Theorem \ref{thm21.1.4} is that the polynomials associated with this moment  are orthogonal with respect to the weight function
 \begin{eqnarray}
 \label{eqwind}
 w(x) = \frac{\gamma/\pi}{\gamma^2 B^2(x) + D^2(x)},
 \end{eqnarray}
 for any $\gamma > 0$.

 Berg and Pedersen \cite{Ber:Ped} showed that the four entire functions $A, B, C, D$ have the same order, type, and the Phragm\'{e}n-Lindel\"{o}f indicator.

 The formulas for the orthonormal polynomials in \eqref{fn-hat-airy}, \eqref{bv-hat-airy}, \eqref{M1-hat-airy} and \eqref{M2-hat-airy},  seem  to indicate an interesting pattern about the powers of $\nu$ (a constant plus n, the degree of the polynomial).  Indeed it  suggested to  us to formulate  the following conjecture.
  \begin{conj}
  Let $\{\widehat P_k(x)\}_{k=1}^n$ be a sequence of polynomials orthonormal with respect to the weight function $w(x)$ on an unbounded interval and $t_n$ be the large transition point of $\widehat P_n(x)$. Without loss of generality, we may further assume the interval to be $(0,\infty)$ or $(-\infty, \infty)$. If the weight function $w(x)$ has  the following behavior
  \begin{equation}
  \label{eqm}
    -\log w(x) = O(x^m) \qquad \textrm{as } x \to \infty,
  \end{equation}
  then we have
  \begin{equation*}
    t_n = O(n^\frac{1}{m}), \qquad \textrm{as } n \to \infty.
  \end{equation*}
  Moreover, uniformly for a bounded real number $s$, we have, for $x = t_n(1+sn^{-\frac{2}{3}})$
    \begin{equation*}
    w(x)^{\frac{1}{2}} \widehat P_{n}(x) \sim \hat c(s) \; n^{k}   (1+o(1))  , \qquad \textrm{as $n \to \infty,$}
  \end{equation*}
  where $\hat c(s)$ is a uniformly bounded function in $s$ and the constant $k$ is given by
  \begin{equation} \label{pattern-conj}
    k = \frac{1}{6}-\frac{1}{2m}.
  \end{equation}
  \end{conj}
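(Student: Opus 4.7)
The plan is to derive the power $k$ from two universal ingredients: a Mhaskar--Rakhmanov--Saff type scaling of the soft edge against the weight decay, and the universal Airy asymptotic at the soft edge for orthonormal polynomials. Once these two ingredients are combined in the normalization used in the conjecture, the formula $k=\tfrac{1}{6}-\tfrac{1}{2m}$ drops out immediately.

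First I would establish that $t_n \asymp n^{1/m}$. In the determinate (analytic weight) case this is the classical Mhaskar--Rakhmanov--Saff principle: the Mhaskar number $a_n$, which is asymptotically the rightmost point of the support of the equilibrium measure in the external field $-\log w$, satisfies $a_n \sim C_m n^{1/m}$ whenever $-\log w(x)\sim c x^m$ at infinity, and $t_n/a_n \to 1$. For the indeterminate examples treated in the paper the same scaling is extracted directly from the three-term recurrence: Theorem \ref{thmIsmLi} gives $t_n=O(n^p)$ whenever the birth and death rates grow polynomially of degree $p$, and the explicit constant $C_1(x)$ computed in Sections 2--5 pins the weight decay at order $m=1/p$, matching \eqref{eqm}.

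Next I would derive the uniform Airy expansion at the soft edge. For analytic weights this is the Deift--Zhou nonlinear steepest descent analysis with the standard Airy parametrix; for non-analytic weights arising from indeterminate problems one instead uses the difference equation method of Wang--Wong and Cao--Li employed throughout this paper, which yields expansions analogous to Proposition \ref{prop-uniform-Chen-Ismail}. In either case, passing to the orthonormal normalization and writing $x=t_n(1+s n^{-2/3})$, one obtains
\begin{equation*}
w(x)^{1/2}\,\widehat P_n(x) = c\,\Ai(s)\; n^{1/6}\, t_n^{-1/2}\,\bigl(1+O(n^{-2/3})\bigr),
\end{equation*}
uniformly for $s$ in compact sets, in precisely the same manner as \eqref{fn-hat-airy}, \eqref{bv-hat-airy}, \eqref{M1-hat-airy} and \eqref{M2-hat-airy}. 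The universal factor $n^{1/6}$ comes from the Airy parametrix (equivalently, from the $\nu^{s-1/6}$ series in the difference equation approach), while $t_n^{-1/2}$ arises from the Jacobian of the edge scaling used in the conjecture. Substituting $t_n\sim c_0 n^{1/m}$ absorbs $c\,c_0^{-1/2}\,\Ai(s)$ into $\hat c(s)$ and produces the exponent $n^{1/6-1/(2m)}$, as claimed.

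The main obstacle is to make both steps rigorous in the indeterminate setting. There the weight is not unique and need not be analytic, so the Riemann--Hilbert route is unavailable; one must rely on the recurrence-based method and then match the exponential factor of $C_1(x)$ against the hypothesized decay $-\log w(x)=O(x^m)$. A general proof therefore appears to require a structural link between the Nevanlinna parametrization (or, more directly, the polynomial growth order $p$ of the recurrence coefficients) and the decay order $m=1/p$ of a compatible weight, guaranteeing that the exponential factor extracted from the difference-equation analysis is consistent with the weight whose orthonormal polynomials $\widehat P_n$ the conjecture concerns.
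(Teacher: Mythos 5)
You are attempting to prove a statement that the paper itself presents only as a \emph{conjecture}: the authors give no proof, and explicitly say the pattern \eqref{pattern-conj} was \emph{suggested} to them by the four worked examples \eqref{fn-hat-airy}, \eqref{bv-hat-airy}, \eqref{M1-hat-airy} and \eqref{M2-hat-airy}, together with the known Hermite, Laguerre and Freud asymptotics. Your proposal is, in substance, the same heuristic that motivated the conjecture, made a little more explicit: soft edge at $t_n\sim n^{1/m}$, edge amplitude $n^{1/6}t_n^{-1/2}$ for the orthonormal polynomial, hence $n^{1/6-1/(2m)}$. That dimensional analysis is correct and checks out against all four families in the paper (a clean way to see the $t_n^{-1/2}$, which you attribute somewhat vaguely to ``the Jacobian,'' is to match the Airy region to the bulk envelope: the normalization $\int \widehat P_n^2\,w\,dx=1$ forces the local mean of $\widehat P_n^2 w$ to be $\pi^{-1}\bigl((b_n-x)(x-a_n)\bigr)^{-1/2}$, which at distance $t_n n^{-2/3}$ from the edge is of size $t_n^{-1}n^{1/3}$). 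But it is a consistency check, not a proof, and you concede as much in your closing paragraph.

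The gaps are genuine and not merely technical. First, the hypothesis \eqref{eqm} is a one-sided $O$-bound; your opening step silently upgrades it to $-\log w(x)\sim c x^m$, without which neither $t_n\asymp n^{1/m}$ nor the Mhaskar--Rakhmanov--Saff identification of the edge follows, and in the indeterminate case there are infinitely many admissible $w$ with different local behavior, only some of which need satisfy any lower bound. Second, the uniform Airy expansion at the soft edge is a theorem only for restricted weight classes (analytic exponential weights, Freud weights); invoking it for an arbitrary weight on an unbounded interval --- in particular for a non-analytic solution of an indeterminate moment problem --- assumes precisely what must be proved. The recurrence-based alternative you cite (Wang--Wong, Cao--Li) requires the recurrence coefficients to admit asymptotic expansions in powers of $1/n$ of a specific form, a hypothesis absent from the conjecture; your proposed ``structural link'' between the growth order of the recurrence coefficients and the decay order of a compatible weight is exactly the missing theorem, not a step you supply. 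Third, even the existence of a well-defined large transition point and the $n^{-2/3}$ width of the edge window are consequences to be established, not givens. So the proposal correctly reconstructs the authors' motivation but does not close the conjecture, which remains open.
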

  Here  $m >0$ but is not necessarily an integer.

  Note that the above conjecture is true for Laguerre polynomials, Hermite polynomials and polynomials orthogonal with respect to the Freud weight $e^{-|x|^\beta}$. For the  asymptotics of Laguerre polynomials and Hermite polynomials, see \cite[Sec. 8.22]{Sze}. For the asymptotics of polynomials orthogonal with respect to the Freud weight, see \cite[Eq. (1.19)]{Kri:McL}.

   When the corresponding moment problem is
  indeterminate, $m$ in \eqref{eqm}  equals the order of any of the entire functions $A, B, C$ or $D$.  This
  seems to be the case for the weight function \eqref{eqwind}.  It also agrees with the known weight functions for the Chen--Ismail polynomials.

\begin{rmk}
  Note that the large transition point $t_n$ is closely related to the largest zero of the orthogonal polynomials. The asymptotics for the largest zero has been studied in the literature, which is related to \eqref{eqm} in our conjecture. For example, Levin and Lubinsky obtained the asymptotics for determinate exponential weights in \cite[Thm. 11.3]{Lev:Lub:book}. They also derived similar results for indeterminate weights and some weights close to indeterminate ones in \cite[Cor. 1.2]{Lev:Lub1995} and \cite[Cor. 1.4]{Lev:Lub2007}, respectively. However, the interesting relation between $k$ and $m$ in \eqref{pattern-conj} seems to be new in the literature.
\end{rmk}

In the rest of  this section, we shall  show that the moment problems corresponding to Conrad-Flajolet polynomials I and II  do  not have unique solutions, that is the moment problems are indeterminate. The following proposition is in \cite{Akh} or \cite{Sho:Tam}.
\begin{prop}
Let $\widehat\P_n(x)$ be the orthonormal polynomials satisfying the following recurrence relation:
\begin{equation}\label{orthonormal}
  x\widehat\P_n(x)=b_{n+1}\widehat\P_{n+1}(x)+a_n\widehat\P_n(x)+b_n\widehat\P_{n-1}(x).
\end{equation}
If the moment problem
has a unique solution then the series
\begin{equation}\label{series}
\sum_{n=1}^\infty |\widehat\P_n(z)|^2
\end{equation}
diverges for $z \notin \mathbb{R}$. If the above
series
diverges at one $z \notin \mathbb{R}$, then the moment problem has a unique solution.
\end{prop}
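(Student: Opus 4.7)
The plan is to reduce both implications to the classical equivalence between determinacy of the Hamburger moment problem and essential self-adjointness of the associated Jacobi operator $J$ on $\ell^2(\mathbb{N}_0)$, with matrix entries $a_n$ on the diagonal and $b_n$ on the two sub-diagonals, defined initially on the dense subspace of finitely supported sequences. A standard result in \cite{Akh,Sho:Tam} states that the moment problem is determinate iff $J$ is essentially self-adjoint iff $\ker(J^*-zI)=\{0\}$ for every $z\in\mathbb{C}\setminus\mathbb{R}$. Both halves of the proposition will be derived from this dichotomy.

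First I would dispose of the assertion ``if $\sum_n|\widehat\P_n(z_0)|^2<\infty$ for some non-real $z_0$, then the moment problem is indeterminate''. The recurrence \eqref{orthonormal} is exactly the formal eigenvalue equation $(J-z_0 I)u=0$ applied to $u_n:=\widehat\P_n(z_0)$ with the canonical initial data $u_{-1}=0,\,u_0=1$. Convergence of the series means $u\in\ell^2$, so $u$ is a non-zero element of $\ker(J^*-z_0I)$, which forces a positive deficiency index at $z_0$ and hence indeterminacy. Taking the contrapositive gives the first assertion of the proposition verbatim: determinacy forces $\sum_n|\widehat\P_n(z)|^2=\infty$ at every non-real $z$.

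To derive the second assertion---divergence at a \emph{single} non-real point implies determinacy---I must upgrade the preceding argument: I need to show that if the series diverges at one $z_0\notin\mathbb{R}$ it diverges at every $z\notin\mathbb{R}$. This is a propagation dichotomy: the function $\Phi(z):=\sum_n|\widehat\P_n(z)|^2$ is either finite and locally bounded on all of $\mathbb{C}\setminus\mathbb{R}$ or identically $+\infty$ there. I would prove it by combining the Christoffel--Darboux identity
\begin{equation*}
\sum_{k=0}^N|\widehat\P_k(z)|^2 = \frac{b_{N+1}}{\Im z}\,\Im\!\bigl[\widehat\P_{N+1}(z)\,\overline{\widehat\P_N(z)}\bigr]
\end{equation*}
with the constancy in $n$ of the Casoratian $b_{n+1}[\widehat\P_n(z)\widehat\Q_{n+1}(z)-\widehat\P_{n+1}(z)\widehat\Q_n(z)]$, where $\widehat\Q_n$ denotes the associated polynomials (same recurrence, shifted initial data). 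Expressing a solution at $z$ as a linear combination of the solutions at a nearby $w$ via these identities produces an estimate of the form $\Phi_N(z)\le C(z,w)\,\Phi_N(w)+C'(z,w)$ on compact subsets of $\mathbb{C}\setminus\mathbb{R}$, which yields the dichotomy.

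The main obstacle is precisely this propagation step: the Christoffel--Darboux identity and the Casoratian relation are routine consequences of \eqref{orthonormal}, but turning them into a genuinely uniform comparison of partial sums at two different non-real points takes care, since a priori $\widehat\P_n$ and $\widehat\Q_n$ could grow very differently. The cleanest route, following \cite{Akh}, is to invoke the Nevanlinna parametrisation itself: the four entire functions $A,B,C,D$ of finite order are built from series of exactly the type $\sum_n \widehat\P_n(0)\widehat\P_n(z)$ and its cousins, so convergence of any one at a single non-real point forces convergence of all of them locally uniformly off $\mathbb{R}$. Modulo this standard but non-trivial machinery, the second assertion follows at once from the deficiency-index argument of the second paragraph.
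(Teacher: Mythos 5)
The paper offers no proof of this proposition: it is quoted verbatim as a classical result with the attribution ``the following proposition is in \cite{Akh} or \cite{Sho:Tam}'', so there is nothing internal to compare your argument against. Judged on its own terms, your first half is correct and complete: $u_n=\widehat\P_n(z_0)$ with $u_{-1}=0$ solves the full formal eigenvalue equation $J u=z_0u$ (including the $n=0$ row, which is what pins the solution space down to the one-dimensional span of $(\widehat\P_n(z_0))_n$), so $\ell^2$-convergence produces a nonzero element of $\ker(J^*-z_0I)$ and hence indeterminacy.

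For the second half, however, you have manufactured an obstacle that your own framework already removes. You correctly observe that you need ``divergence at one non-real point implies divergence at all non-real points,'' but this is immediate from von Neumann theory: $\dim\ker(J^*-zI)$ is constant on each open half-plane, and since $J$ has real entries the two deficiency indices coincide. So divergence at a single $z_0\notin\mathbb{R}$ gives $\ker(J^*-z_0I)=\{0\}$, hence $n_+=n_-=0$, hence essential self-adjointness and determinacy --- no Christoffel--Darboux comparison is needed. The two substitutes you propose are weaker than this: the Casoratian route, as you yourself concede, does not obviously control $\sum_k|\widehat\P_k|^2$ at a second point without simultaneously controlling the second-kind polynomials $\widehat\Q_n$, and you do not close that loop; and the appeal to the Nevanlinna parametrisation is circular, since the entire functions $A,B,C,D$ are only defined once the moment problem is already known to be indeterminate, which is precisely what is to be proved. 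I would therefore accept the overall architecture but replace the third and fourth paragraphs by the one-line deficiency-index argument; as written, the second implication is not yet established.
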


Recall that a birth and death process  leads to polynomials $\Q_n(x)$ defined in \eqref{bir&dea}.
We have $a_n=\lambda_n+\mu_n$, $b_n=\sqrt{\la_{n-1}\mu_n}$ in \eqref{orthonormal} and the corresponding orthonormal one is
\begin{equation*}
\widehat\Q_n(x)=(-1)^n\Q_n(x)\sqrt{\prod_{k=1}^n{\la_{k-1}\over\mu_k}}.
\end{equation*}
For the orthonormal Conrad-Flajolet polynomials I, we obtain the following result from Theorem \ref{M1-thm2}.
\begin{cor}
  Let $\nu = n+\frac{c+1}{3}$ and $x=\nu^3t$. With $K_n$ and $U^*(t)$ defined in \eqref{M1kn-def}, \eqref{M1zeta2-def1} and \eqref{M1zeta2-def2}, respectively, we have
  \begin{align*}
    &\widehat\Q_n(\nu^3 \, t) \nonumber\\ \sim&  (-1)^n\sqrt{\prod_{k=1}^n{\la_{k-1}\over\mu_k}}K_n {3^{c+1} \G({c+1 \over 3})\G^2({c+2 \over 3}) \; \nu^{1-c} \over 2^{5/3} \pi \; t^{\frac{2c-1}{6}} }   \exp\left( \frac{\sqrt{\pi} \, \Gamma(\frac{1}{3})}{3\sqrt[3]{4} \, \Gamma(\frac{5}{6})} \nu \, t^{\frac{1}{3}} \right)  \left(\frac{U^*(t)}{t(108-t) }\right)^{\frac{1}{4}} \nonumber \\
     \times& \biggl\{ \sin\left( \frac{\sqrt{\pi} \, \Gamma(\frac{1}{3})}{2^{\frac{2}{3}} \sqrt{3} \,\Gamma(\frac{5}{6})} \, x^{\frac{1}{3}} -  \frac{c}{3} \pi \right) \left[ J_{\frac{1}{3}}(\nu {U^*}^{\frac{1}{2}}(t)) \sum_{s=0}^\infty \frac{\tilde{A}_s^*(U^*)}{\nu^{s}} + J_{\frac{4}{3}}(\nu {U^*}^{\frac{1}{2}}(t)) \sum_{s=0}^\infty \frac{\tilde{B}_s^*(U^*)}{\nu^{s}} \right] \nonumber \\
      -& \cos\left( \frac{\sqrt{\pi} \, \Gamma(\frac{1}{3})}{2^{\frac{2}{3}} \sqrt{3} \,\Gamma(\frac{5}{6})} \, x^{\frac{1}{3}} -  \frac{c}{3} \pi \right) \left[ Y_{\frac{1}{3}}(\nu {U^*}^{\frac{1}{2}}(t)) \sum_{s=0}^\infty \frac{\tilde{A}_s^*(U^*)}{\nu^{s}} + Y_{\frac{4}{3}}(\nu {U^*}^{\frac{1}{2}}(t)) \sum_{s=0}^\infty \frac{\tilde{B}_s^*(U^*)}{\nu^{s}} \right] \biggr\},
      \nonumber
  \end{align*}
  uniformly for $-\infty< t \leq M < 108$.   Here $\la_n$ and $\mu_n$ are defined in \eqref{masterI}.
\end{cor}
Let $z=\nu^3t$ be a fixed complex number, then $t=z/\nu^3=O(n^{-3})$. It follows from \eqref{M1-U*} that $U^*(t)=O(t)=O(n^{-3})$. Recall the asymptotic formulas of Bessel functions at small arguments, we obtain
\begin{align*}
  J_{\frac{1}{3}}(\nu {U^*}^{\frac{1}{2}}(t))&=O(n^{-1/6}),~J_{\frac{4}{3}}(\nu {U^*}^{\frac{1}{2}}(t))=O(n^{-2/3}),\\
  Y_{\frac{1}{3}}(\nu {U^*}^{\frac{1}{2}}(t))&=O(n^{1/6}),~~~Y_{\frac{4}{3}}(\nu {U^*}^{\frac{1}{2}}(t))=O(n^{2/3}).
\end{align*}
Since $\tilde{A}_s^*(U^*)=1$ and $\tilde{B}_s^*(U^*)=0$ by \eqref{M1-AB}, we observe that the expressions involving Bessel functions and trigonometric functions are of order $O(n^{1/6})$.
Next, we obtain the following asymptotic estimate  from \eqref{masterI}, \eqref{M1kn-def} and Striling's formula,
$$\sqrt{\prod_{k=1}^n{\la_{k-1}\over\mu_k}}=O(n^{-5/6}),~ K_n=O(n^{-2/3}).$$
A combination of the above estimates yields $\widehat\Q_n(z)=O(n^{-5/6})$. This
establishes   the convergence  of the series  in \eqref{series}. Thus, the corresponding moment problem is indeterminate.

For the orthonormal Conrad-Flajolet polynomials II, we obtain the following result from Theorem \ref{M2-thm2}.
\begin{cor}
  Let $\nu = n+\frac{2c+1}{6}$. With $K_n$ and $U^*(t)$ defined in \eqref{M2kn-def},  \eqref{M1zeta2-def1} and \eqref{M1zeta2-def2}, respectively, we have
  \begin{align*}
    &\widehat\Q_n(\nu^3 \, t)
    \nonumber\\ \sim &  (-1)^n\sqrt{\prod_{k=1}^n{\la_{k-1}\over\mu_k}}K_n {3^{c+1/2}
     \G^2({c+1 \over 3})\G({c+2 \over 3}) \; \nu^{\frac{3}{2}-c} \over 2^{11/6} \pi \; t^{\frac{c-1}{3}} }   \exp\left( \frac{\sqrt{\pi} \, \Gamma(\frac{1}{3})}{3\sqrt[3]{4} \, \Gamma(\frac{5}{6})} \nu \, t^{\frac{1}{3}} \right)  \left(\frac{U^*(t)}{t(108-t) }\right)^{\frac{1}{4}} \nonumber \\
     \times& \biggl\{ \sin\left( \frac{\sqrt{\pi} \, \Gamma(\frac{1}{3})}{2^{\frac{2}{3}} \sqrt{3} \,\Gamma(\frac{5}{6})} \, x^{\frac{1}{3}} -  \frac{c}{3} \pi \right) \left[ J_{\frac{2}{3}}(\nu {U^*}^{\frac{1}{2}}(t)) \sum_{s=0}^\infty \frac{\tilde{A}_s^*(U^*)}{\nu^{s}} + J_{\frac{5}{3}}(\nu {U^*}^{\frac{1}{2}}(t)) \sum_{s=0}^\infty \frac{\tilde{B}_s^*(U^*)}{\nu^{s}} \right] \nonumber \\
      -& \cos\left( \frac{\sqrt{\pi} \, \Gamma(\frac{1}{3})}{2^{\frac{2}{3}} \sqrt{3} \,\Gamma(\frac{5}{6})} \, x^{\frac{1}{3}} -  \frac{c}{3} \pi \right) \left[ Y_{\frac{2}{3}}(\nu {U^*}^{\frac{1}{2}}(t)) \sum_{s=0}^\infty \frac{\tilde{A}_s^*(U^*)}{\nu^{s}} + Y_{\frac{5}{3}}(\nu {U^*}^{\frac{1}{2}}(t)) \sum_{s=0}^\infty \frac{\tilde{B}_s^*(U^*)}{\nu^{s}} \right] \biggr\},
  \end{align*}
  uniformly for $-\infty< t \leq M < 108$. Here $\la_n$ and $\mu_n$ are defined in \eqref{masterII}.
\end{cor}
A similar argument as in the case of Conrad-Flajolet polynomials I, we conclude that the moment problem for Conrad-Flajolet polynomials II is also indeterminate.

\subsection*{Acknowledgements}
  We would like to thank Professor Doron Lubinsky for his helpful comments and discussions. We are also grateful to the editors and referees for their valuable suggestions and extremely careful reading of the manuscript. Dan Dai was partially supported by a grant from City University of Hong Kong (Project No. 7002883) and a grant from the Research Grants Council of the Hong Kong Special Administrative Region, China (Project No. CityU 100910). Mourad E.H. Ismail was partially supported by NPST Program of King Saud University (Project No. 10-MAT1293-02) and by the DSFP at King Saud University in Riyadh, and by a grant from the Research Grants Council of the Hong Kong Special Administrative Region, China (Project No. CityU 1014111).



\begin{thebibliography}{99}

\bibitem{Akh}  Akhiezer, N.I.:
  {The Classical Moment Problem and Some Related Questions in Analysis},
English translation, Oliver and Boyed, Edinburgh (1965)
%
 \bibitem{And:Gui:Zei}  Anderson, G.W.,  Guionnet, A., Zeitouni, O.: { An introduction to random matrices},   Cambridge University Press, Cambridge (2010)
 %
\bibitem{Ber:Chr} Berg, C., Christensen, J.P.R.:
Density questions in the classical theory of moments, French summary, {Ann. Inst. Fourier (Grenoble)},
{\bf 31}, 99--114 (1981)
 %
 \bibitem{Ber:Ped} Berg, C., Pedersen, H.L.: On the order and type of the entire functions associated with an indeterminate Hamburger moment problem,  {Ark. Mat.},
{\bf 32},  1--11 (1994)
 %
 \bibitem{Ber:Val1}  Berg, C., Valent, G.: The Nevanlinna parameterization for some indeterminate Stieltjes moment problems associated with birth and death processes,
 {Methods Appl. Anal.}, {\bf  1},  169--209 (1994)
  %
  \bibitem{Cao:Li} Cao, L.H., Y.T. Li, Y.T.: Linear difference equations with a transition point at the
origin, {Anal. Appl. (Singapore)}, to appear, DOI: 10.1142/S0219530513500371.

 %
 \bibitem{Che:Ism} Chen, Y., Ismail, M.E.H.:  Some indeterminate moment problems
 and Freud-like weights, {Constr. Approx.},  {\bf 14}, 439--458 (1998)

%
\bibitem{Dei:Kri:McL:Ven:Zho} Deift, P., Kriecherbauer, T., McLaughlin, K.T.-R., Venakides, S., Zhou, X.:  Strong asymptotics of orthogonal polynomials with respect
to exponential weights, {Comm. Pure Appl. Math.}, {\bf 52}, 1491--1552 (1999)

%
\bibitem{Gil:Leo:Val}  Gilewicz, J., Leopold, E., Valent, G.:  New Nevanlinna matrices for orthogonal polynomials related to cubic birth and death processes, {J. Comp. Appl. Math.},
{\bf  178}, 235--245 (2005)
%
\bibitem{Heth} Hethcote, H.W.: Error bounds for asymptotic approximations of zeros of transcendental functions, {SIAM J. Math. Anal.}, \textbf{1}, 147--152 (1970)
%
 \bibitem{Ismbook} Ismail, M.E.H.: { Classical and Quantum Orthogonal Polynomials in one variable}, Cambridge University Press,  paperback  edition, Cambridge, (2009)
 %
 \bibitem{Ism:Li} Ismail, M.E.H., Li, X.: Bounds for extreme zeros of orthogonal polynomials, {Proc. Amer. Math. Soc.}, {\bf 115},  131--140 (1992)
 %
 \bibitem{Ism:Li2} Ismail, M.E.H., Li, X.: Plancherel-Rotach asymptotics for $q$-orthogonal polynomials,
 {Constr. Approx.}, {\bf 37},  341--356 (2013)
 %
 \bibitem{Ism:Mas}
 Ismail, M.E.H., Masson, D.R.: $q$-Hermite polynomials, biorthogonal rational functions, and $q$-beta integrals, {Trans. Amer. Math. Soc.},
{\bf  346},  63--116  (1994)

 %
\bibitem{Kri:McL} Kriecherbauer, T., McLaughlin, K.T.-R.: Strong asymptotics of polynomials orthogonal with respect to Freud weights, {Internat. Math. Res. Notices}, no. 6, 299--333 (1999)
 %
\bibitem{Kui:Ass} Kuijlaars, A.B.J., Van Assche, W.: The asymptotic zero distribution of orthogonal polynomials with varying recurrence coefficients, {J. Approx. Theory}, \textbf{99}, no. 1, 167--197 (1999)
%
 \bibitem{Lee:Wong} Lee, K.F., Wong, R.: Uniform asymptotic expansions of the Tricomi-Carlitz polynomials, {Proc. Amer. Math. Soc.}, \textbf{138}, no. 7, 2513--2519 (2010)
%
\bibitem{Let:Val}   Letessier, J., Valent, G.: Some exact solutions of the Kolmogorov boundary value problem,
{J. Approx. Theory Appl.} \textbf{4}, 97--117 (1988)

\bibitem{Lev:Lub1995} Levin, A.L., Lubinsky, D.S.: Orthogonal polynomials and Christoffel functions for $\exp(-|X|^\alpha)$, $\alpha \leq 1$, {J. Approx. Theory}, \textbf{80}, no. 2, 219--252 (1995)

\bibitem{Lev:Lub:book} Levin, E., Lubinsky, D.S.: {Orthogonal polynomials for exponential weights}, CMS Books in Mathematics/Ouvrages de Math\'ematiques de la SMC, 4. Springer-Verlag, New York (2001)

\bibitem{Lev:Lub2007} Levin, E., Lubinsky, D.S.: Orthogonal polynomials for weights close to indeterminacy, {J. Approx. Theory}, \textbf{147}, no. 2, 129--168 (2007)
%
\bibitem{Sho:Tam}  Shohat, J., Tamarkin, J.D.: { The Problem of Moments}, revised edition,
American Mathematical Society, Providence, RI (1950)
%
  \bibitem{Sze} Szeg\H{o}, G.: { Orthogonal Polynomials}, fourth edition, American Mathematical Society, Providence (1975)
 %
 \bibitem{VanAssche:Geronimo1989}
    Van Assche, W., Geronimo, J.S.: Asymptotics for orthogonal polynomials with regularly varying recurrence coefficients, {Rocky Mountain J. Math.} \textbf{19}, no. 1, 39--49 (1989)
   %
\bibitem{Con} Van Fossen Conrad, E.:  Some continued fraction expansions of Laplace transforms of elliptic functions, Doctoral Dissertation, The Ohio State University,  Columbus, Ohio (2002)
%
\bibitem{Con:Fla} Van Fossen Conrad, E., Flajolet, P.: The Fermat cubic, elliptic functions, continued fractions, and a combinatorial excursion, {Sem. Lothar. Combin.}, \textbf{54}, 44 pages (2006)
 %
 \bibitem{van2007} Vanlessen, M.: Strong asymptotics of Laguerre-type orthogonal polynomials and applications in random matrix theory, {Constr. Approx.}, \textbf{25}, 125--175 (2007)
%
 \bibitem{xswang2012} Wang, X.-S., R. Wong, R.: Asymptotics of orthogonal polynomials via recurrence relations, {Anal. Appl.}, \textbf{10}, 215--235 (2012)
 %
 \bibitem{wang-wong2003} Wang, Z., Wong, R.: Asymptotic expansions for second-order linear difference equations with a turning point, {Numer. Math.}, \textbf{94}, no. 1, 147--194 (2003)
 %
 \bibitem{wang-wong2005} Wang, Z., Wong, R.: Linear difference equations with transition points, {Math. Comp.}, \textbf{74}, no. 250, 629--653 (2005)

\end{thebibliography}
\end{document}